\DeclareMathOperator*{\supp}{supp}
\DeclareMathOperator*{\conv}{conv}
\DeclareMathOperator*{\diam}{diam}
\DeclareMathOperator*{\dist}{dist}
\DeclareMathOperator*{\pos}{pos}
\DeclareMathOperator*{\fil}{fill}
\DeclareMathOperator*{\err}{err}
\DeclareMathOperator*{\Time}{Time}
\DeclareMathOperator*{\Light}{Light}
\DeclareMathOperator*{\std}{std(g_R)}
\DeclareMathOperator*{\Lor}{Lor}
\newcommand{\N}{\mathbb N}
\newcommand{\Z}{\mathbb Z}
\newcommand{\Q}{\mathbb Q}
\newcommand{\R}{\mathbb R}
\newcommand{\C}{\mathbb C}
\newcommand{\T}{\mathfrak{T}}
\newcommand{\e}{\varepsilon}
\newtheorem{theoremloc}{Theorem}
\newtheorem{definitionloc}[theoremloc]{Definition}
\newtheorem{propositionloc}[theoremloc]{Proposition}
\newtheorem{lemmaloc}[theoremloc]{Lemma}
\newtheorem{corollaryloc}[theoremloc]{Corollary}
\newtheorem*{theoremlocstar}{Theorem}{\bf}{\it}
\newtheorem*{propositionlocstar}{Proposition}{\bf}{\it}
\newtheorem{example}{Example}
\newtheorem{remark}{Remark}
\newtheorem{note}{Note}
\begin{document}

\title{Class A Spacetimes}

\author[Stefan Suhr]{Stefan Suhr}
\address{Fachbereich Mathematik, Universit\"at Hamburg}
\email{stefan.suhr@math.uni-hamburg.de}

\date{\today}

\begin{abstract}
We introduce class A spacetimes, i.e. compact vicious spacetimes $(M,g)$ such that the Abelian cover $(\overline{M},\overline{g})$ is globally hyperbolic. We 
study the main properties of class A spacetimes using methods similar to those introduced in \cite{sul} and \cite{bu}. As a consequence we are able to 
characterize manifolds admitting class A metrics completely as mapping tori. Further we show that the notion of class A spacetime is equivalent to that of SCTP 
(spacially compact time-periodic) spacetimes as introduced in \cite{ga84}. The set of class A spacetimes is shown to be open in the $C^0$-topology on the set of 
Lorentzian metrics. As an application we prove a coarse Lipschitz property for the time separation of the Abelian cover. This coarse Lipschitz property is an 
essential part in the study of Aubry-Mather theory in Lorentzian geometry.

\keywords{spacetime geometry \and spacetime topology \and Lorentzian distance function}

\end{abstract}

\maketitle

\section{Introduction}\label{caus}

The theory of compact Lorentzian manifolds is in large parts terra incognita. In opposition to Riemannian geometry, Lorentzian geometry is focused on noncompact 
manifolds, for well known reasons motivated by physical intuition in general relativity. The situation with compact Lorentzian manifolds is vague to the extent that 
there is no well established large subclass of compact Lorentzian manifolds with well understood geometric features. It is the purpose of these notes to propose 
one such class (class A) and study some of its properties. We point out that, though class A spacetimes justify a study by themselves, the main application for these 
spacetimes will be the study of homologically maximizing causal geodesics (Aubry-Mather theory) in subsequent publications (\cite{suh110}, \cite{suh111}). 

A compact spacetime $(M,g)$ is said to be {\it class A} if $(M,g)$ is vicious and the Abelian cover is globally hyperbolic (see section \ref{lorba} for definitions).
Class A spacetimes appear for the first time for $2$-dimensional compact manifolds in \cite{suh102}. Though the definition of class A therein differs from the one 
above, \cite{suh102} shows that this definition and the definition in \cite{suh102} are equivalent for $\dim M=2$. \cite{suh102} contains the 
justification for the name `class A' as well. Therein a distinction is made between class A and class B spacetimes. It is by far not clear what a good definition of 
class B spacetimes should be for $\dim M\ge 3$. 

First examples of class A spacetimes are flat Lorentzian tori, i.e. quotients of Minkowski space by a cocompact lattice. Other known examples are spacetime 
structures on $2$-tori admitting either a timelike or spacelike conformal Killing vector field (\cite{suh102}).

It turns out that class A spacetimes are closely related to the known class of SCTP (spatially compact time periodic) spacetimes introduced in \cite{ga84} (see 
definition \ref{DSCTP} and theorem \ref{stab2}). SCTP spacetimes $(M',g')$ are globally hyperbolic spacetimes with a compact Cauchy hypersurface $\Sigma$ 
and an isometric $\Z$-action (translation into the future) by isometries $\psi_n \colon M'\to M'$ such that $\Sigma_{n+1}:=\psi_{n+1}(\Sigma)\subset 
I^+(\Sigma_{n})$ and the orbit of every $x\in M'$ intersects $I^+(x)$. Then the time orientable quotient of a SCTP spacetime is class A and every class A spacetime 
is covered by a SCTP spacetime (theorem \ref{stab2}). Recent work in mathematical physics revealed the relevance of SCTP (and with it class A) spacetime to 
general relativity (\cite{fihai}). 

The definition of class A spacetimes in terms of causality conditions yields surprising restrictions on the topological and geometric structure of these spacetimes. 
The main result of these notes is theorem \ref{stab2}. Theorem \ref{stab2} has three important corollaries (corollary \ref{C1}, \ref{stab} and \ref{C2}). 

Corollary \ref{C1} gives a precise characterization of manifolds that admit class A metrics. Like in the case of globally hyperbolic spacetimes, existence of class A 
spacetime structures induce strong restrictions on the topology of $M$, i.e. there exists a class A metric in $\Lor(M)$ iff $M$ is diffeomorphic to a mapping torus. 
Note that this result represents a compact version of the global splitting theorem for globally hyperbolic spacetimes (\cite{ger1},\cite{besa3}) for compact 
spacetimes.

Corollary \ref{stab} states that the set of class A metrics (i.e. Lorentzian metrics on $M$ such that $(M,g)$ is class A) is open in the $C^0$-uniform topology on the 
space of Lorentzian metrics $\Lor(M)$ on $M$. This represents a uniform version of theorem 12 in \cite{ger1}: For any globally hyperbolic spacetime $(M,g)$ there 
exists an open neighborhood $U$ of $g$ in $Lor(M)$, equipped with the fine $C^0$-topology, such that any Lorentzian metric $g_1\in U$ is globally hyperbolic 
as well. Note that one cannot use fine neighborhoods as in \cite{ger1} for $\overline g:=\overline{\pi}^\ast g$ directly, since the topology induced on $\Lor(M)$ 
by the canonical projection $\overline{\pi}\colon \overline{M}\to M$ is finer than the uniform topology on $\Lor(M)$, and therefore $\overline g$ might be the only 
periodic Lorentzian metric in $U$.

Finally with corollary \ref{C2} we prove that any SCTP spacetime is isometric to a ``standard SCTP'' spacetime. Consider spacetimes $(\R\times \Sigma, -f^2dt^2+ 
g_\Sigma)$ where $f$ is a positive function on $\R\times \Sigma$ and $g_\Sigma$ is a Riemannian metric on $\Sigma$ i.g. depending on the $t$-coordinate as 
well. These spacetimes are called {\it standard SCTP} spacetime if there exists an isometry $\psi \colon \R\times\Sigma\to \R\times \Sigma$ of the form $\psi(t,x)=
(t+1,\phi(x))$ or $\psi(t,x)=(-t+1,\phi(x))$ for some diffeomorphism $\phi \colon \Sigma \to \Sigma$. Corollary \ref{C2} represents an analogue to the Lorentzian 
splitting theorem for globally hyperbolic spacetimes with a group action. 

The proof of theorem \ref{stab2} incorporates several different constructions and methods, e.g. Sullivan's structure cycles (\cite{sul}, see appendix \ref{cones}), a 
generalization of a methods introduced by D. Yu Burago (\cite{bu}) and the construction of the homological timecone $\T$ (see section \ref{sec2}) the 
Schwartzmann's cone for $1$-cycles with support in the future pointing vectors. The homological timecone can be seen as an asymptotic (i.e. stable) version 
of the causality relations in the Abelian cover, much in the same way the stable norm on $H_1(M,\R)$ (\cite{gromov}, 4.19) can be seen as an asymptotic version of 
the Riemannian distance function on the Abelian  cover. Example \ref{E1} shows that the result of theorem \ref{stab2} is in some respect optimal, i.e. the 
viciousness assumption cannot be dropped. 

The second main result, theorem \ref{T16}, claims the coarse Lipschitz property of the time separation (Lorentzian distance, see section \ref{lorba}) of the 
Abelian cover of a class A spacetime. The Lipschitz continuity of the time separation has received very little attention in the literature 
so far. It made a short appearance in connection with the Lorentzian version of the Cheeger-Gromoll splitting theorem (\cite{es}, \cite{gaho}). 
The idea we employ here is different from the approaches before and is based on so-called cut-and-paste arguments commonly used 
in Aubry-Mather theory (\cite{ba},\cite{ma}). 

The text is structured as follows: In section \ref{GNN} we collect the necessary notions from Lorentzian and Riemannian geometry and set the global notation. In 
section \ref{dipl} we review previous work on globally conformally flat tori. In section \ref{sec2} we introduce the stable time cone $\T$, the homological equivalent 
of the causal future and discuss the main results and examples mentioned so far. Finally sections \ref{proof} and \ref{rc} contain the proofs of theorem \ref{stab2} 
and theorem \ref{T16}.

\section{Geometric Notions and Notation}\label{GNN}

\paragraph{Notation}\label{not}

Throughout the article we consider smooth manifolds only. $\mathcal{D}(M',M)$ denotes the group of deck transformations for a regular cover $\pi'\colon 
M'\to M$. By $\overline{M}$ we denote the quotient of the universal cover $\widetilde{M}$ by the commutator group of $\pi_1(M)$, i.e. $\overline{M}\cong
\widetilde{M}/[\pi_1(M),\pi_1(M)]$. $\overline{M}$ will be called the {\it Abelian cover} of $M$. Denote with $\overline{\pi}$ the canonical projection of $\overline{M}$ 
to $M$. Further we denote with $H_1(M,\mathbb{Z})_\mathbb{R}$ the image of the natural map $H_1(M,\mathbb{Z})\to H_1(M,\mathbb{R})$. We denote the action 
of $\mathcal{D}(\overline{M},M)$ by $+$, i.e. 
$$(k,x)\in \mathcal{D}(\overline{M},M)\times \overline{M}\mapsto x+k\in \overline{M}.$$

\subsection{Riemannian structures}\label{GMT}

We will need the concept of {\it rotation vectors} from \cite{ma}. Let $k_1,\ldots ,k_b$ $(b:=dim\; H_1(M,\mathbb{R}))$ be a basis of $H_1(M,\mathbb{R})$ consisting 
of integer classes, and $\alpha_1,\ldots ,\alpha_b$ the dual basis with representatives $\omega_1,\ldots ,\omega_b$. For two points $x,y \in \overline{M}$ we 
define the {\it difference} $y-x\in H_1(M,\mathbb{R})$ via a $C^1$-curve $\gamma\colon [s,t]\to \overline{M}$ connecting $x$ and $y$, by 
  $$\langle \alpha_i, y-x\rangle :=\int_\gamma \pi^*\omega_i$$
for all $i\in \{1,\ldots,b\}$. The rotation vector of  $\gamma$ as well as of $\overline{\pi}\circ \gamma$ is defined as 
$$\rho(\gamma):=\rho(\overline{\pi}\circ\gamma):=\frac{1}{t-s}(y-x).$$
Note that the map $(x,y)\mapsto y-x$ is i.g. not surjective. But we know that the convex hull of the image is $H_1(M,\mathbb{R})$. Just observe that by our 
choice of classes $\alpha_i$ we know that every $k\in H_1(M,\mathbb{Z})_\mathbb{R}$ is the image of $(x,x+k')$ for every $x\in \overline{M}$ where 
$k'\in H_1(M,\mathbb{Z})\mapsto k\in H_1(M,\mathbb{Z})_\mathbb{R}$ under the natural map.

We choose a Riemannian metric $g_R$ on $M$ arbitrary but fixed once and for all. We denote the distance function relative to $g_R$ by $dist$ and the metric 
balls of radius $r$ around $p\in M$ with $B_r(p)$. The metric $g_R$ induces a norm on every tangent space of $M$, which we denote by $|.|$, i.e. $|v|:=
\sqrt{g_R(v,v)}$ for all $v\in TM$. Further we denote by $T^{1,R}M$ the unit tangent bundle of $(M,g_R)$. For convenience of notation we denote the lift of $g_R$ 
to $\overline{M}$, and all objects associated to it, with the same letter. Set 
$$\diam(M,g_R):=\max_{p\in M} \min_{k\in H_1(M,\mathbb{Z})\setminus\{0\}}\{\dist(\overline{p},\overline{p}+k)|\; \overline{p}\in 
\overline{\pi}^{-1}(p)\}$$
the homological diameter of $(M,g_R)$.

We will constantly employ the following theorem. 
\begin{theoremloc}[\cite{bu}, \cite{kett}]\label{3.1}
Let $(M,g_R)$ be a compact Riemannian manifold. Then there exists a unique norm $\|.\|\colon H_1(M,\mathbb{R})\to \mathbb{R}$ and a constant 
$\std<\infty$ such that    
$$|\dist(x,y)-\|y-x\||\le \std$$
for any $x,y\in \overline{M}$.
\end{theoremloc}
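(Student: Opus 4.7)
The plan is to construct the stable norm via subadditivity, extend by continuity, and then establish the uniform comparison, which will be the main obstacle.

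First I fix a basepoint $\overline{p}_0\in \overline{M}$ and define $f(k):=\dist(\overline{p}_0,\overline{p}_0+k)$ for $k\in H_1(M,\Z)_\R$. Because $\mathcal{D}(\overline{M},M)$ acts isometrically on $(\overline{M},g_R)$, the triangle inequality gives subadditivity $f(k+k')\le f(k)+f(k')$, and Fekete's lemma yields $\|k\|:=\lim_n \tfrac{1}{n}f(nk)=\inf_n \tfrac{1}{n}f(nk)$. The limit is independent of the basepoint, since a different choice changes $f(nk)$ by at most $2\diam(M,g_R)$, which vanishes after division by $n$. Taking any integer basis $k_1,\dots,k_b$ of $H_1(M,\Z)_\R$ I get the Lipschitz bound $\|\sum n_ik_i\|\le \sum|n_i|f(k_i)$, so $\|\cdot\|$ extends homogeneously to $H_1(M,\Q)_\R$ and continuously to $H_1(M,\R)$; subadditivity and homogeneity survive the limit, and positivity on a basis follows from the fact that deck orbits are discrete closed subsets of $\overline{M}$, giving a uniform lower bound on basis displacements via compactness of $M$.

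The main difficulty is the uniform two-sided bound. The lower estimate $\dist(x,y)\ge \|y-x\|-\std$ is essentially built into the definition: for $y-x=k\in H_1(M,\Z)_\R$, Fekete gives $f(k)\ge \|k\|$, and a bounded basepoint shift combined with continuity of $\|\cdot\|$ extends this to arbitrary real classes. The delicate direction is $\dist(x,y)\le \|y-x\|+\std$, where I would follow Burago's strategy. Fix a finite family of closed curves $\gamma_1,\dots,\gamma_N$ on $M$ whose homology classes almost realize $\|\cdot\|$ in finitely many directions covering the unit sphere of $\|\cdot\|$. Given $v=y-x$, write $v$ as a nonnegative combination $\sum s_i[\gamma_i]$ with $\sum s_i\|[\gamma_i]\|$ close to $\|v\|$, concatenate appropriate integer multiples of the $\gamma_i$ to build a candidate path, and close the bounded remainder with a short connecting arc. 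The crux is the uniform boundedness of the closing correction across all $v$; this rests on the observation that for sufficiently large $R$, the image of $\{(x',y')\in \overline{M}\times\overline{M} : \dist(x',y')\le R\}$ under the difference map $(x',y')\mapsto y'-x'$ contains a full neighborhood of $0$ in $H_1(M,\R)$, which again uses the compactness of $M$ and the cocompactness of the $\mathcal{D}(\overline{M},M)$-action.

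Uniqueness follows almost formally: if $\|\cdot\|_1$ and $\|\cdot\|_2$ both satisfy the stated inequality with constants $C_1,C_2$, then evaluating at pairs $(\overline{p}_0,\overline{p}_0+nk)$ for any $k\in H_1(M,\Z)_\R$ gives $n|\|k\|_1-\|k\|_2|\le C_1+C_2$, so letting $n\to\infty$ forces equality on the integer lattice, hence on all of $H_1(M,\R)$ by continuity.
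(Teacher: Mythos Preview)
The paper does not prove Theorem \ref{3.1}; it is cited from \cite{bu} and \cite{kett}. However, the paper \emph{does} reproduce Burago's machinery in Section \ref{sec3} (Lemmas \ref{lemmabu} and \ref{lembu2}, and the proof of Lemma \ref{L14}) to prove the analogous Proposition \ref{P01a}, so one can read off what the intended argument for Theorem \ref{3.1} is.

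Your construction of $\|\cdot\|$ via Fekete, the extension to $H_1(M,\R)$, the lower bound $\dist(x,y)\ge \|y-x\|$, and the uniqueness argument are all fine. The gap is in the upper bound $\dist(x,y)\le \|y-x\|+\std$. What you describe is \emph{not} Burago's strategy, and it does not work: with a \emph{fixed} finite family of loops $\gamma_1,\dots,\gamma_N$, the decomposition $v=\sum s_i[\gamma_i]$ with $s_i\ge 0$ cannot in general achieve $\sum s_i\|[\gamma_i]\|$ within a uniform additive constant of $\|v\|$ unless the stable norm ball is a polytope with vertices among the $[\gamma_i]$. For a strictly convex stable norm (e.g.\ the Euclidean norm on a flat torus) the best you get is $\sum s_i\|[\gamma_i]\|\le (1+\e_N)\|v\|$ with $\e_N\to 0$ as $N\to\infty$, which yields only $f(k)\le (1+\e_N)\|k\|+C_N$ with $C_N$ uncontrolled as $\e_N\to 0$. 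That is strictly weaker than the claimed uniform additive bound.

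Burago's actual argument is the one you see in Lemma \ref{L14}: take a minimizing geodesic $\gamma$ from $\overline p_0$ to $\overline p_0+2k$, apply the Borsuk--Ulam halving lemma (Lemma \ref{lembu2}) to the curve $t\mapsto \gamma(t)-\gamma(0)$ in $H_1(M,\R)$ to extract at most $[b/2]$ subarcs whose homological displacements sum to $k$, translate and rejoin these subarcs with connectors of length at most $\fil$-type constants, and obtain a curve from $\overline p_0$ to a point within bounded distance of $\overline p_0+k$ of length at most $\tfrac12 f(2k)+C$. This gives $2f(k)-f(2k)\le C'$; subadditivity gives $f(\kappa k)-\kappa f(k)\le 0$ for $\kappa=2,3$; and Lemma \ref{lemmabu} then yields $|f(k)-\|k\||\le 2C'$ uniformly. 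The Borsuk--Ulam step is the missing idea in your sketch.
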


$\|.\|$ is called the stable norm of $g_R$ on $H_1(M,\mathbb{R})$. The distance function on $H_1(M,\mathbb{R})$ relative to $\|.\|$ is written as $\dist_{\|.\|}$. By 
$\|.\|^\ast$ we denote its dual norm on $H^1(M,\mathbb{R})$.

\subsection{Lorentzian Geometry}\label{lorba}

Let $(M,g)$ be a spacetime, i.e. a Lorentzian manifold such there exists $X\in \Gamma(TM)$ with $g(X,X)<0$ (we use the sign convention $(-,+,\ldots ,+)$). $X$ 
is called a {\it time-orientation}. Denote by $[g]$ the conformal class of the Lorentzian metric $g$ sharing the same time-orientation, i.e. all Lorentzian metrics $g'$ 
such that there exists $u\in C^\infty (M)$ with $g'=e^u g$ and the time orientations of $g$ and $g'$ coincide. For $[g]$ define the sets
$$\Time(M,[g]):=\{\text{future pointing timelike vectors in $(M,g)$}\}$$
and
$$\Light(M,[g]):=\{\text{future pointing lightlike vectors in $(M,g)$}\}.$$
Both $\Time(M,[g])$ and $\Light(M,[g])\setminus \{\text{zero section}\}$ are smooth fibre bundles over $M$. Denote by $\Time(M,[g])_p$ and $\Light(M,[g])_p$ the 
fibres of $\Time(M,[g])$ and $\Light(M,[g])$ over $p\in M$, respectively. For $\e>0$ we define
$$\Time(M,[g])^\e:=\{v\in \Time(M,[g])|\; \dist(v,\Light(M,[g])\ge \e|v|\}.$$
$\Time(M,[g])^\e$ is a smooth fibre bundle as well with fibre $\Time(M,[g])^\e_p$ over $p\in M$. The fibres are convex for every $p\in M$ according to the following 
lemma and corollary.
 
\begin{lemmaloc}\label{L100}
Let $(V,|.|)$ be a finite-dimensional normed vector space and $\mathfrak{K}\subset V$ a convex set with $\mathfrak{K}\neq V$. 
Then the function $v\in \mathfrak{K}\mapsto \dist_{|.|}(v,\partial\mathfrak{K})$ is concave.
\end{lemmaloc}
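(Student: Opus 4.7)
The plan is to establish the concavity inequality by showing that the closed $|.|$-ball of radius $r := \lambda r_1 + (1-\lambda) r_2$ around $v := \lambda v_1 + (1-\lambda) v_2$ sits inside $\overline{\mathfrak{K}}$ (where $r_i := \dist_{|.|}(v_i,\partial\mathfrak{K})$), and then transferring this to a containment of the open ball in $\mathfrak{K}^{\circ}$. If $\mathfrak{K}^{\circ} = \emptyset$ there is nothing to prove, since then $\partial\mathfrak{K}=\overline{\mathfrak{K}}\supseteq\mathfrak{K}$ and the distance function vanishes identically on $\mathfrak{K}$. I therefore assume $\mathfrak{K}$ has nonempty interior and denote by $C$ the closed unit $|.|$-ball.

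First I record the basic inclusion $v+r(v)\,C\subseteq\overline{\mathfrak{K}}$ for every $v\in\mathfrak{K}$, with $r(v):=\dist_{|.|}(v,\partial\mathfrak{K})$. If $v\in\partial\mathfrak{K}$ this is trivial; for $v\in\mathfrak{K}^{\circ}$, any $u$ strictly within distance $r(v)$ of $v$ must lie in $\mathfrak{K}^{\circ}$---otherwise the segment $[v,u]$ would meet $\partial\mathfrak{K}$ at a point closer to $v$ than $r(v)$, contradicting the definition---and one passes to the closed ball by taking closures.

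Next I apply the Minkowski-sum identity $\alpha C+\beta C=(\alpha+\beta)C$, valid for any convex symmetric $C$ and $\alpha,\beta\ge 0$, to obtain
\[
v + rC \;=\; \lambda(v_1+r_1 C) + (1-\lambda)(v_2+r_2 C) \;\subseteq\; \lambda\overline{\mathfrak{K}}+(1-\lambda)\overline{\mathfrak{K}} \;=\; \overline{\mathfrak{K}},
\]
using the previous step and the convexity of $\overline{\mathfrak{K}}$. The open interior $v+rC^{\circ}$ is then an open subset of $\overline{\mathfrak{K}}$, so it sits inside $(\overline{\mathfrak{K}})^{\circ}=\mathfrak{K}^{\circ}$; in particular, no point of $\partial\mathfrak{K}$ lies strictly within distance $r$ of $v$, yielding $\dist_{|.|}(v,\partial\mathfrak{K})\ge r$ as required.

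The only step that really deserves care is the final identification $(\overline{\mathfrak{K}})^{\circ}=\mathfrak{K}^{\circ}$, a standard property of convex sets with nonempty interior, so I anticipate no serious obstacle.
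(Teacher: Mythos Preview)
Your argument is correct and is precisely the elementary convex-geometry verification the paper has in mind; the paper itself omits the proof entirely, merely labeling it ``an exercise in convex geometry'' and pointing to \cite{cg1} for a more general Riemannian version. Your write-up thus supplies the expected details rather than taking a different route.
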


The proof is an exercise in convex geometry. See \cite{cg1} theorem 1.10 for a proof in the more general case that $(M,g_R)$ 
is Riemannian manifold of nonnegative curvature.

If $\mathfrak{K}$ is a convex cone we know that $v\in \mathfrak{K}\mapsto \dist_{|.|}(v,\partial\mathfrak{K})$ is 
positively homogenous of degree one, i.e. $\dist_{|.|}(\lambda v,\partial \mathfrak{K})=
\lambda \dist_{|.|}(v,\partial \mathfrak{K})$ for all $\lambda \ge 0$. 
Lemma \ref{L100} and the positive homogeneity then imply
$$\dist\nolimits_{|.|}(v+w,\partial \mathfrak{K})\ge \dist\nolimits_{|.|}(v,\partial\mathfrak{K})+
\dist\nolimits_{|.|}(w,\partial\mathfrak{K}).$$

\begin{corollaryloc}\label{C100}
Let $\mathfrak{K}\neq V$ be a convex cone and $\e>0$. The cones 
$$\mathfrak{K}_\e:=\{v\in \mathfrak{K}|\; \dist\nolimits_{|.|}(v,\partial\mathfrak{K})\ge \e |v|\}$$ 
are convex for all $\e>0$. 
\end{corollaryloc}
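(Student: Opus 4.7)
The plan is to reduce convexity of $\mathfrak{K}_\varepsilon$ to the superadditivity inequality for $v\mapsto \dist_{|.|}(v,\partial\mathfrak{K})$ that was recorded immediately after Lemma \ref{L100}. First I would verify that $\mathfrak{K}_\varepsilon$ is a (positive) cone: if $v\in\mathfrak{K}_\varepsilon$ and $\lambda\ge 0$, then by positive homogeneity
\[
\dist\nolimits_{|.|}(\lambda v,\partial\mathfrak{K})=\lambda\dist\nolimits_{|.|}(v,\partial\mathfrak{K})\ge \lambda\varepsilon |v|=\varepsilon|\lambda v|,
\]
so $\lambda v\in\mathfrak{K}_\varepsilon$. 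In particular, to prove convexity it suffices to show that $\mathfrak{K}_\varepsilon$ is closed under addition.

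Next I would take $v,w\in\mathfrak{K}_\varepsilon$ and estimate directly, using the superadditivity inequality displayed before the corollary together with the defining inequality of $\mathfrak{K}_\varepsilon$ and the triangle inequality of $|.|$:
\[
\dist\nolimits_{|.|}(v+w,\partial\mathfrak{K})\ge \dist\nolimits_{|.|}(v,\partial\mathfrak{K})+\dist\nolimits_{|.|}(w,\partial\mathfrak{K})\ge \varepsilon(|v|+|w|)\ge\varepsilon|v+w|.
\]
Hence $v+w\in\mathfrak{K}_\varepsilon$, which together with the cone property gives convexity via the usual identification $tv+(1-t)w=(tv)+((1-t)w)$ for $t\in[0,1]$.

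There is essentially no obstacle here; the substance of the argument already lives in Lemma \ref{L100} and in the homogeneity of distance to the boundary of a cone. The only minor subtlety is making sure the superadditivity inequality is applicable (it is, since $\mathfrak{K}_\varepsilon\subset\mathfrak{K}$ and $\dist_{|.|}(\cdot,\partial\mathfrak{K})$ is concave and positively homogeneous on the convex cone $\mathfrak{K}$), after which the proof reduces to a one-line chain of inequalities.
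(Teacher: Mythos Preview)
Your argument is correct and is exactly the approach the paper intends: the corollary is stated immediately after the superadditivity inequality $\dist_{|.|}(v+w,\partial\mathfrak{K})\ge \dist_{|.|}(v,\partial\mathfrak{K})+\dist_{|.|}(w,\partial\mathfrak{K})$, and the paper gives no further proof, the implication via the triangle inequality for $|.|$ and positive homogeneity being precisely what you have written out.
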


Finally we give the definitions for the notions from causality theory used in these notes. For an overview of causality theory of Lorentzian manifolds see \cite{bee}.
A piecewise smooth curve $\gamma\colon [a,b]\to M$ is called {\it future pointing (future pointing timelike)} if all left and right sided tangents are future pointing
(future pointing timelike). Then the {\it time separation} $d(p,q)$ of two points $p,q\in M$ is given as the supremum of the Lorentzian arclength of future pointing 
curves from $p$ to $q$. Here the supremum over the empty set is defined as $0$. Note that every future pointing curve admits a Lipschitz parameterization and is 
therefore rectifiable. 

A spacetime is called {\it vicious} if every point lies on a timelike loop. Equivalently one can suppose that the chronological past and future of every point are equal 
to the entire manifold. A spacetime $(M,g)$ is {\it globally hyperbolic} if the there exists a subset $S\subset M$ such that every inextendible timelike curves 
intersects $S$ exactly once.

\section{Preceding Work }\label{dipl}

In this section we shortly review a preceding study (\cite{su}) with results that served as a motivation for the present article. The part of \cite{su} relevant to this text 
is concerned with the problem of Lipschitz continuity of the time separation in the Abelian cover of a globally conformally flat Lorentzian torus. Note that globally 
conformally flat Lorentzian tori are trivially of class A. Before we formulate the results, we explain the precise setup.

Consider a real vector space $V$ of dimension $m<\infty$ and $\langle .,.\rangle_1$ a nondegenerate symmetric bilinear form on $V$ with signature 
$(-,+,\ldots ,+)$. Further let $\Gamma\subseteq V$ be a co-compact lattice and $f\colon V \to (0,\infty)$ a smooth and $\Gamma$-invariant function. The Lorentzian 
metric $\overline{g}:= f^2\langle .,.\rangle_1$ then descends to a Lorentzian metric on the torus $V/\Gamma$. Denote the induced Lorentzian metric by $g$. 
Choose a time orientation of $(V,\langle .,.\rangle_1)$. This time orientation induces a time orientation on  $(V/\Gamma,g)$ as well. Note that $(V/\Gamma,g)$ is 
vicious and the universal cover $(V,\overline{g})$ is globally hyperbolic. According to \cite{rosa1} proposition 2.1, $(V/\Gamma,g)$ is geodesically complete in all 
three causal senses. Fix a norm $\|.\|$ on $V$ and denote the dual norm by $\|.\|^\ast$. Note that $\|.\|$ induces a metric on $V/\Gamma$. Further denote by $\T$ 
the positively oriented causal vectors of $(V,\langle .,.\rangle_1)$. 

For $\e>0$ set $\T_\e:= \{v\in \T|\, \dist(v,\partial \T)\ge \e\|v\|\}$. Choose an orthonormal basis $\{e_1,\ldots ,e_m\}$ of $(V,\langle .,.\rangle_1)$. Note that the 
translations $x\mapsto x+v$ are conformal diffeomorphisms of $(V,\overline{g})$ for all $v\in V$. Then the $\overline{g}$-orthogonal frame field $x\mapsto 
(x,(e_1,\ldots ,e_m))$ on $V$ descends to a $g$-orthogonal frame field on $V/\Gamma$. Relative to this identification of $V\cong TV_p$ follows $\T=
\Time(V,[\overline{g}])_p\cup \Light(V,[\overline{g}])_p$ and $\T_\e=\Time(V,[\overline{g}])^\e_p$. 

\cite{su} contains the following compactness result for future pointing maximizers in $(V/\Gamma,g)$.
\begin{theoremloc}[\cite{su}]\label{TD21}
For every $\e>0$ there exists $\delta>0$ such that 
$$\dot\gamma(t)\in \T_\delta$$
for all future pointing maximizers $\gamma\colon I \to V/\Gamma$ with $\dot\gamma(t_0)\in \T_\e$ for some $t_0\in I$ and all $t\in I$.
\end{theoremloc}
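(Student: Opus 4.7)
The plan is to argue by contradiction, using compactness lifted to the universal cover $V$. Suppose the claim fails for some $\e>0$: there exist future pointing maximizers $\gamma_n\colon I_n\to V/\Gamma$ and times $s_n,t_n\in I_n$ with $\dot\gamma_n(s_n)\in\T_\e$ but $\dot\gamma_n(t_n)\notin\T_{1/n}$. Reparametrize each $\gamma_n$ by $\|\cdot\|$-arclength; this preserves being a future pointing maximizer and, because the cones $\T_\e$ are scale invariant, preserves both the hypothesis and the negated conclusion. Shift the parameter so that $s_n=0$, lift $\gamma_n$ to $V$, and translate by an element of $\Gamma$ (an isometry of $\overline g$) so that $\gamma_n(0)$ lies in a fixed compact fundamental domain $K\subset V$. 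Then $(\gamma_n(0),\dot\gamma_n(0))$ lies in the compact set $K\times(\T_\e\cap\{\|\cdot\|=1\})$, and a subsequence converges to some $(p_\infty,v_\infty)$ with $v_\infty\in\T_\e$ strictly timelike. Since maximizers are $\overline g$-geodesics up to reparametrization, continuous dependence on initial data together with the fact that $C^1_{\mathrm{loc}}$-limits of future pointing maximizers are again maximizers yields a $C^1_{\mathrm{loc}}$-limit $\gamma_\infty$ on a maximal interval $J\ni 0$, a future pointing maximizer with $\dot\gamma_\infty(0)=v_\infty$, whose velocity stays strictly timelike on $J$ because causal type is constant along geodesics.

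If $t_n$ stays bounded, pass to a subsequence with $t_n\to t_\infty\in J$; then $\dot\gamma_n(t_n)\to\dot\gamma_\infty(t_\infty)$, but $\|\dot\gamma_n(t_n)\|=1$ together with $\dist(\dot\gamma_n(t_n),\partial\T)<1/n$ forces the limit to be a unit null vector, contradicting strict timelikeness. Hence $|t_n|\to\infty$, and we may assume $t_n\to+\infty$. Now recenter: set $\sigma_n(s):=\gamma_n(s+t_n)-k_n$ for $k_n\in\Gamma$ chosen so that $\sigma_n(0)\in K$. Each $\sigma_n$ is again a future pointing maximizer with $\dot\sigma_n(0)=\dot\gamma_n(t_n)\to v_\infty'$, a unit null vector. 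A further compactness step gives a $C^1_{\mathrm{loc}}$-subsequential limit $\sigma_\infty$, a future pointing maximizer with $\dot\sigma_\infty(0)=v_\infty'$. Because $\overline g=f^2\langle\cdot,\cdot\rangle_1$ is conformally flat, null $\overline g$-geodesics are reparametrizations of affinely parametrized null $\langle\cdot,\cdot\rangle_1$-lines; so $\sigma_\infty$ traces a straight null line in $V$. Meanwhile $\dot\sigma_n(-t_n)=\dot\gamma_n(0)\to v_\infty\in\T_\e$, so each long segment $\sigma_n|_{[-t_n,0]}$ transitions from strictly timelike at the far end to nearly null at $s=0$.

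The main obstacle is to turn this qualitative picture into a contradiction with maximality by producing a future pointing causal competitor from $\sigma_n(-t_n)$ to $\sigma_n(0)$ whose $\overline g$-length strictly exceeds $L(\sigma_n|_{[-t_n,0]})$. The plan is a cut-and-paste at an intermediate time. Using the two-sided bound
\[
f_{\min}\sqrt{-\langle y-x,y-x\rangle_1}\le d_{\overline g}(x,y)\le f_{\max}\sqrt{-\langle y-x,y-x\rangle_1},
\]
obtained from the flat reverse triangle inequality combined with the uniform bounds $f_{\min}\le f\le f_{\max}$ on the compact $V/\Gamma$, one controls $\overline g$-distances in $V$ by flat timelike distances of the displacement. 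Exploiting the quantitative strict concavity of $v\mapsto\sqrt{-\langle v,v\rangle_1}$ on $\T$ away from $\partial\T$, I would replace a suitable part of $\sigma_n$ near its near-null end by a strictly timelike detour whose flat Lorentzian length $L_1$ strictly exceeds that of the original piece, and then use the above inequality to convert this $L_1$-gain into an honest $\overline g$-length gain uniformly in $n$. The delicate points are (a) matching endpoints so the concatenation is a valid causal competitor, and (b) making the gain uniform in $n$; both rely crucially on the conformal flatness (which forces the limit to be a straight null line, hence pinned to a definite direction) and on the compactness of $V/\Gamma$ (which gives the uniform bounds on $f$).
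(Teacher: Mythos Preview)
The paper does not actually contain a proof of this theorem. Section~\ref{dipl} is explicitly a review of the earlier work \cite{su}, and Theorem~\ref{TD21} is merely quoted from there with the attribution ``[\textit{su}]''; no argument is reproduced. So there is no in-paper proof against which to compare your proposal.

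That said, a few comments on your attempt itself. The overall scheme---contradict, normalize, translate into a fixed fundamental domain, extract limit geodesics, and split into the cases $t_n$ bounded versus $t_n\to\infty$---is the natural compactness strategy, and the bounded-$t_n$ case is handled correctly (causal type is constant along geodesics, so a timelike limit cannot have a lightlike tangent). The real content is the unbounded case, and here you are honest that step~6 is only a sketch. As written it is not a proof: you outline a cut-and-paste competitor exploiting concavity of $\sqrt{-\langle\cdot,\cdot\rangle_1}$ and the two-sided bound $f_{\min}\,d_1\le d_{\overline g}\le f_{\max}\,d_1$, but you neither specify the intermediate cut point nor verify that the $\overline g$-length gain survives the passage from $d_1$ to $d_{\overline g}$ (the two-sided inequality goes the wrong way for one of the two pieces, so the bookkeeping needs to be done carefully, and uniformity in $n$ is genuinely delicate). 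A cleaner route in the conformally flat setting is to observe that for any two parameters $s<t$ the displacement $\gamma(t)-\gamma(s)$ lies in $\T$ (causal structure is conformally invariant), and then to compare the $\overline g$-length of $\gamma|_{[s,t]}$ with $f_{\min}\sqrt{-\langle\gamma(t)-\gamma(s),\gamma(t)-\gamma(s)\rangle_1}$; maximality then forces the \emph{secant} directions $\gamma(t)-\gamma(s)$ to stay uniformly away from $\partial\T$, and one recovers control on $\dot\gamma$ from this. Either way, the argument you have written stops short of the essential estimate.
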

Theorem \ref{TD21} has the following immediate consequence.
\begin{corollaryloc}[\cite{su}]
Let $\e>0$. Then any limit curve of a sequence of future pointing maximizers $\gamma_n\colon I_n\to V/\Gamma$ with $\dot{\gamma}_n(t_n)\in \T_\e$, for some 
$t_n\in I_n$, is timelike. 
\end{corollaryloc}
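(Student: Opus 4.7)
The plan is to combine Theorem \ref{TD21} with a standard closedness argument for the cone $\T_\delta$.

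First I would apply Theorem \ref{TD21} to the given $\e > 0$ and extract a $\delta > 0$ such that $\dot\gamma_n(t) \in \T_\delta$ for every $n$ and every $t \in I_n$ (in whatever parameterization the $\gamma_n$ come equipped with; this inclusion is invariant under positive rescaling since $\T_\delta$ is a cone). Then I would reparameterize each $\gamma_n$ by $g_R$-arclength, which does not affect the pointwise direction of $\dot\gamma_n$ and so preserves the property $\dot\gamma_n(t) \in \T_\delta$ while making the family uniformly $1$-Lipschitz with respect to $g_R$.

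Next I would use the definition of a limit curve: after passing to a subsequence, the $\gamma_n$ converge uniformly on compact sets to a $1$-Lipschitz curve $\gamma$. Using the parallel frame $(e_1,\ldots,e_m)$ coming from $(V,\langle\cdot,\cdot\rangle_1)$ to trivialize the tangent bundle of $V/\Gamma$, the derivatives $\dot\gamma_n$ can be regarded as elements of a fixed bounded set in $L^\infty(I,V)$. Passing to a further subsequence, $\dot\gamma_n \to \dot\gamma$ in the weak-$\ast$ topology of $L^\infty$. Since by Corollary \ref{C100} the set $\T_\delta$ is a closed convex cone in $V$, and weak-$\ast$ limits of $L^\infty$-functions with values in a closed convex set again take values in that set almost everywhere, I conclude that $\dot\gamma(t) \in \T_\delta$ for almost every $t$.

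Finally I would note that $\T_\delta \setminus \{0\}$ consists entirely of timelike vectors: if $v \in \T_\delta$ with $v \neq 0$, then $\dist(v,\partial\T)\ge \delta\|v\|>0$, so $v$ lies strictly inside the future timelike cone. Since $\gamma$ is parameterized by $g_R$-arclength, $|\dot\gamma(t)|=1$ almost everywhere, so $\dot\gamma(t)$ is future-directed timelike a.e., which is the defining property of a timelike Lipschitz curve. The main (and essentially only) subtle point is justifying that the closed convex cone condition on the $\dot\gamma_n$ is inherited by $\dot\gamma$; everything else is a direct translation of Theorem \ref{TD21}.
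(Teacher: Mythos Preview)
The paper does not give a proof of this corollary at all; it simply calls it ``an immediate consequence'' of Theorem \ref{TD21}. Your argument supplies the details the paper omits, and the overall strategy---Theorem \ref{TD21} forces all tangents into the closed convex cone $\T_\delta$, and this property passes to limit curves---is exactly the intended one.

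There is one genuine gap, however. You write ``Since $\gamma$ is parameterized by $g_R$-arclength, $|\dot\gamma(t)|=1$ almost everywhere'', but a uniform limit of arclength-parameterized curves need not be arclength-parameterized; weak-$\ast$ convergence in $L^\infty$ can lose norm, so a priori you only get $|\dot\gamma(t)|\le 1$ a.e., and you have not excluded $\dot\gamma(t)=0$ on a set of positive measure. The fix is easy and stays within your framework: since $\T$ is a compact cone there is a linear functional $\alpha$ with $\alpha(v)\ge c\|v\|$ on $\T$, hence $\alpha(\dot\gamma_n(t))\ge c$ for all $n,t$. The set $\T_\delta\cap\{\alpha\ge c\}$ is closed, convex, contains every $\dot\gamma_n(t)$, and excludes $0$; your weak-$\ast$ argument then gives $\dot\gamma(t)$ in this set a.e., so $\dot\gamma(t)\neq 0$ and is timelike. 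Alternatively, since maximizers are geodesics, one can bypass the weak-$\ast$ step entirely: after normalizing $|\dot\gamma_n(t_n)|=1$ the initial data subconverge in $T^{1,R}(V/\Gamma)\cap\T_\delta$, and continuous dependence of geodesics on initial data gives a $C^1$ limit with timelike initial vector.
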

The author then deduces, following \cite{gaho}, the Lipschitz continuity of the time separation $d$ of $(V,\overline{g})$ on $\{(p,q)\in V\times V|\; q-p\in \T_\e\}$ for 
every $\e>0$. Using the standard argument that local Lipschitz continuity with a fixed Lipschitz constant implies Lipschitz continuity, one obtains the following 
theorem.
\begin{theoremloc}[\cite{su}]
For all $\e>0$ there exists $L=L(\e)<\infty$ such that the time separation $d$ of $(V,\overline{g})$ is $L$-Lipschitz on $\{(x,y)\in V\times V|\, y-x\in \T_\e\}$.
\end{theoremloc}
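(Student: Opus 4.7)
My plan is to reduce the assertion to a uniform \emph{local} Lipschitz estimate and then extract the required uniformity from Theorem~\ref{TD21}. The set $\Omega_\e := \{(x,y) \in V \times V : y - x \in \T_\e\}$ is open and convex (as $\T_\e$ is convex by Corollary~\ref{C100}), and the standard subdivision argument along the linear segment between two pairs in $\Omega_\e$ reduces the statement to producing, around each $(x_0,y_0) \in \Omega_\e$, a neighborhood on which $d$ is $L$-Lipschitz with $L = L(\e)$ independent of the base pair.

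Fix $(x_0,y_0) \in \Omega_\e$ and a future-pointing maximizing geodesic $\gamma_0\colon [0,T] \to V$ from $x_0$ to $y_0$, which exists by the geodesic completeness of $(V/\Gamma,g)$. The crucial step is to show that $\dot\gamma_0$ is \emph{uniformly} timelike, i.e.\ that $\dot\gamma_0(t) \in \T_\delta$ for all $t$ with some $\delta = \delta(\e) > 0$ independent of $(x_0,y_0)$. Combining the straight-line lower bound $d(x_0,y_0) \ge (\min f)\,\sqrt{-\langle y_0-x_0,y_0-x_0\rangle_1}$ with the reverse triangle inequality along $\gamma_0$ forces some parameter $t_\ast$ at which $\dot\gamma_0(t_\ast) \in \T_{\e'}$ for a universal $\e' = \e'(\e) > 0$. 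Theorem~\ref{TD21}, applied to the projection of $\gamma_0$ to $V/\Gamma$, then promotes this into $\dot\gamma_0(t) \in \T_\delta$ for \emph{all} $t \in [0,T]$, with $\delta$ depending only on $\e$ and the $\Gamma$-invariant data of $f$.

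For the local estimate I would adapt the cut-and-paste construction of \cite{gaho}: given $(x',y')$ in a small Euclidean neighborhood of $(x_0,y_0)$, concatenate a short timelike connector from $x'$ to $\gamma_0(\eta)$, the middle piece $\gamma_0|_{[\eta,T-\eta]}$, and a short timelike connector from $\gamma_0(T-\eta)$ to $y'$ to build a future-pointing curve whose Lorentzian length bounds $d(x',y')$ from below. The first-variation formula for timelike geodesics controls the resulting length change by an integrand of the form $\overline g(\dot\gamma_0,v)/\sqrt{-\overline g(\dot\gamma_0,\dot\gamma_0)}$, which is uniformly bounded in terms of $\delta$ and of the $\Gamma$-invariant bounds on $f$. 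This produces $d(x',y') \ge d(x_0,y_0) - C(\|x'-x_0\| + \|y'-y_0\|)$ with $C = C(\delta,f)$; exchanging the roles of the two pairs and applying the same cut-and-paste to a maximizer from $x'$ to $y'$ yields the opposite inequality with the same constant.

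The main obstacle is precisely the uniformity of $\delta$: without Theorem~\ref{TD21}, the maximizer $\gamma_0$ could have tangents drifting toward the lightcone, in which case $1/\sqrt{-\overline g(\dot\gamma_0,\dot\gamma_0)}$ in the first variation would blow up and the Lipschitz estimate would degenerate. A secondary technical point is the convexity/averaging step needed to trigger Theorem~\ref{TD21}, namely that a maximizer whose chord lies in $\T_\e$ must admit at least one interior tangent in $\T_{\e'}$; this should follow from the maximizer property combined with the lower bound on $d$ in terms of the Minkowski norm of the chord, but it is the part of the argument most sensitive to the interplay between causal and Euclidean geometry.
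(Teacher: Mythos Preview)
Your proposal is correct and follows essentially the same approach as the paper, which only sketches the argument since the theorem is cited from \cite{su}: deduce local Lipschitz continuity following \cite{gaho} from Theorem~\ref{TD21}, then pass to global Lipschitz via the standard argument that a uniform local Lipschitz constant yields a global one. Your plan --- uniform timelikeness of maximizers via Theorem~\ref{TD21}, the \cite{gaho} cut-and-paste for the local estimate, and subdivision along the segment in $\Omega_\e$ --- matches this outline exactly.
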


\section{Causality Properties of Class A Spacetimes}\label{sec2}

Recall the definition of class A spacetimes from the introduction.
\begin{definitionloc}\label{D1}
A compact spacetime $(M,g)$ is of class A if $(M,g)$ is vicious and the Abelian cover $\overline\pi \colon (\overline M,\overline g)\rightarrow (M,g)$ is globally 
hyperbolic. We call a metric $g\in \Lor(M)$ class A iff $(M,g)$ is class A.
\end{definitionloc}

We begin the discussion of class A spacetimes with an example of naturally appearing spacetimes that give rise to a class A spacetime.

\begin{example}\label{E0}
Let $(N,g_N)$ be a closed (i.e. compact with empty boundary) Riemannian manifold and $\phi\colon (N,g_N)\to (N,g_N)$ an isometry. 
Consider the globally hyperbolic spacetime $(\R\times N, g':=-dt^2+\pi_N^\ast g_N)$ where $\pi_N\colon \R\times N\to N$ denotes the natural projection. 
The group $G\cong \Z$ generated by $\psi\colon (t,x)\to (t+1,\phi(x))$ acts free and properly discontinuously by time orientation preserving isometries on 
$(\R\times N,g')$. Consequently the quotient $M:=(\R\times N)/G$ carries naturally a time orientable Lorentzian metric $g$. Further the quotient is compact. We 
claim that the spacetime $(M,g)$ is class A. 

First we will prove that $(M,g)$ is vicious. Let $(t,x)\in \R\times N$. We will show that $\psi^n(t,x)\in I^+((t,x))$ for some $n\in\N$. The projection of a timelike
curve between $(t,x)$ and $\psi^n(t,x)$ then yields a timelike loop around $G(t,x)\in M$. By construction we know that $\{t+1\}\times N\cap I^+((t,x))=
\{t+1\}\times B_1(x)$ for all $(t,x)\in \R\times N$, where $B_1(x)$ denotes the ball of radius $1$ and center $x$ in $(N,g_N)$. This follows from the fact that for any 
curve $\gamma \colon [t,t+1]\to N$ with $\gamma(t)=x$ and $|\dot{\gamma}|< 1$, the curve $\gamma'(s):=(s,\gamma(s))$ is timelike in $(\R\times N,g')$.
By induction we get $\{t+n\}\times N\cap I^+((t,x))=\{t+n\}\times B_n(x)$. Since $\diam(N,g_N)<\infty$ there exists a $n_0\in \N$ such that $\{t+n_0\}\times N
\subseteq I^+((t,x))$. Thus we have $\psi^{n_0}(t,x)\in I^+((t,x))$ and $(M,g)$ is vicious. 

To show that the Abelian Lorentzian cover $(\overline{M},\overline{g})$ of $(M,g)$ is globally hyperbolic we will show that there exists a closed $1$-form $\omega$
on $M$ such that $\ker \omega$ is a spacelike hyperplane in any tangent space of $M$. The lift of $\omega$ to $\overline{M}$ then is the differential of a Cauchy 
temporal function $\tau_\omega \colon \overline{M}\to \R$. This is well known to be equivalent to global hyperbolicity.

Consider the natural projection $\pi_\R\colon \R\times N\to \R$. The differential $d\pi_\R$ is $G$-invariant and $\ker d\pi_\R$ is spacelike everywhere. Therefore 
$d\pi_\R$ induces a closed $1$-form $\omega$ on $M$. Then the pullback of $\omega$ to $\overline{M}$ is the differential of a temporal function $\tau$, i.e.
$\ker d\tau$ is spacelike everywhere. It remains to show that $\tau$ is a Cauchy temporal function, i.e. $\tau\circ\gamma\colon I\to \R$ is onto for all inextendible 
causal curves $\overline{\gamma}\colon I\to \overline{M}$. Let $\overline{\gamma}\colon I\to \overline{M}$ be an inextendible causal curve in $(\overline{M},
\overline{g})$. Projection of $\overline{\gamma}$ to $M$ and lifting to $\R\times N$ yields a inextendible causal curve $\gamma\colon I\to \R\times N$. 
Consequently we know that $\pi_\R\circ \gamma\colon I\to \R$ is onto since $\pi_\R$ is obviously a Cauchy temporal function on $\R\times N$. By construction we 
have $\tau\circ \overline{\gamma}\equiv \pi_\R\circ \gamma$ up to a constant. Therefore we arrive at the conclusion that $\tau$ is Cauchy. This finishes the proof 
that $(M,g)$ is class A.

Note that the Cauchy hypersurfaces in $(\R\times N,-dt^2+\pi^\ast_N g_N)$ are compact. In fact it can be shown that any Cauchy hypersurface of $(\R\times N,
-dt^2+\pi^\ast_N g_N)$ is diffeomorphic to $N$. We will see below (theorem \ref{stab2}) that the existence of a covering manifold of a compact vicious spacetime 
with a compact Cauchy hypersurface is equivalent to the class A condition. 
\end{example}

\begin{example}[Counterexample]\label{E1} 
A natural question appearing at this point is, if whether any compact spacetime with a globally hyperbolic covering space admits a covering space with a compact 
Cauchy hypersurface. Spacetimes with a compact Cauchy hypersurface are called {\it spatially closed}. In this paragraph we will construct a spacetime structure on 
the $3$-torus with globally hyperbolic Abelian covering but  no spatially closed globally hyperbolic covering, thus showing that viciousness is essential.

Consider $\R^3$ with the canonical coordinates $\{x,y,z\}$. Denote with $\overline{T}_i:=x^{-1}(i)$ for $i=1,\ldots,6$. Choose a $7\cdot\Z^3$-invariant Lorentzian 
metric $\overline{g}$ on $\R^3$ subject to the following conditions:
\begin{itemize}
\item[(i)] $\overline{g}|_{\overline{T}_1+(7\Z) e_1}=\overline{g}|_{\overline{T}_4+(7\Z) e_1}=(dx+dz)dx+dy^2$,
\item[(ii)] $\overline{g}|_{\overline{T}_3+(7\Z) e_1}=\overline{g}|_{\overline{T}_6+(7\Z) e_1}=(dx-dz)dx+dy^2$,
\item[(iii)] $\overline{g}|_{\overline{T}_2+(7\Z) e_1}=-dy dz+dx^2$,
\item[(iv)] $\overline{g}|_{\overline{T}_5+(7\Z) e_1}=dydz+dx^2$ and
\item[(v)] $\ker dz_p$ is spacelike for all $p\notin (\overline{T}_2\cup\overline{T}_5)+7\Z e_1$.
\item[(vi)] $(\R^3,\overline{g})$ contains a timelike periodic curve $\overline{\gamma}\colon [0,1]\to \R^3$ with 
$\overline{\gamma}(1)-\overline{\gamma}(0)=7e_3$ 
\end{itemize}
Since $\R^3$ is simply connected we can choose a time-orientation for $(\R^3,\overline{g})$. Choose the time-orientation such that $dz$ is nonnegative on future 
pointing vectors. Note that by condition (v) the real number $dz(v)$ is either positive or negative for every nonspacelike vector $v\in T\R^3$ except for 
$v=\partial_y$ and $\pi_{T\R^3}(v)\in \overline{T}_2$ or $v=-\partial_y$ and $\pi_{T\R^3}(v)\in \overline{T}_5$.

We can choose $\e>0$ such that $\tau_1\colon \R^3\to \R$, $p\mapsto \e y(p)+z(p)$ is a Cauchy temporal function for $x(p)\in [-1,4]+7\Z$ and $\tau_2\colon \R^3
\to \R$, $p\mapsto -\e y(p)+z(p)$ is a Cauchy temporal function for $x(p)\in [3,8]+7\Z$. Therefore there exists $\e'>0$ such that $|d\tau_1(v)|$ or $|d\tau_2(v)|\ge 
\e'|v|$ for all nonspacelike vectors $v\in T\R^3_p$. We know that the existence of Cauchy temporal functions is sufficient for global hyperbolicity and thus we see 
that $([-1,4]+7\Z,\overline{g})$ and $([5,8]+7\Z,\overline{g})$ are globally hyperbolic. Note that any future pointing curve starting in $x^{-1}([-1,4])$ can never leave 
$x^{-1}([-1,4])$. The same holds for future pointing curves starting in $x^{-1}([3,8])$. Together with the periodicity of $\overline{g}$, these observations imply that 
$(\R^3,\overline{g})$ is globally hyperbolic.

Since we have chosen $\overline{g}$ invariant under translations in $7\cdot\Z^3$, it descends to a Lorentzian metric $g$ on $T^3:=\R^3/(7\cdot \Z^3)$. Note 
$(T^3,g)$ is time-orientable but not vicious (recall the argument that future pointing curves can never leave $x^{-1}([-1,4])$). 

Now assume that there exists a spatially closed cover $\pi'\colon (Z,g')\to(T^3,g)$ with compact Cauchy hypersurface $\Sigma$. Any lift $\overline{\Sigma}$ of 
$\Sigma$ to $\R^3$ has to be a Cauchy hypersurface of $(\R^3,\overline{g})$ (\cite{gaha}). With \cite{besa3} we can assume that $\Sigma$ is spacelike. Note that 
$(\overline{T}_2,\overline{g}|_{\overline{T}_2})$ and $(\overline{T}_5,\overline{g}|_{\overline{T}_5})$ are Lorentzian submanifolds of $(\R^3,\overline{g})$. Denote 
the projections of $\overline{T}_2$ and $\overline{T}_5$ to $Z$ with $T'_2$ and $T'_5$. Then the intersections of $T'_2$ and $T'_5$ with $\Sigma$ are transversal 
and compact, since $\Sigma$ is compact and spacelike. Consequently they are compact spacelike curves in $(Z,g')$ and the fundamental classes in $\pi_1(T'_2)$ 
resp. $\pi_1(T'_5)$ are nontrivial (The lifts to $\overline{T}_2$ and $\overline{T}_5$ cannot be closed). Therefore they intersect the projections of $\{x=2, z=z_0\}$ 
and $\{x=5, z=z_0\}$ for every $z\in \R$.

Choose a closed curve in each intersection. The fundamental classes of the projections are contained in $\pos_\Z\{-7e_2, 7e_3\}\subset \pi_1(T^3)$ on $T_2$ 
resp. in $\pos_\Z\{7e_2, 7e_3\}$ $\subset \pi_1(T_5)$ on $T_5$. Denote them by $\sigma_1\in \pos_\Z\{-7e_2, 7e_3\}$ resp. $\sigma_2 \in \pos_\Z\{7e_2, 7e_3\}$.

Since $\Sigma$ is homotopic to the spatially closed covering space, $\pi_1(\Sigma)$ can be considered as a subgroup of $\pi_1(T^3)$. But then 
$\Z\sigma_1\oplus\Z\sigma_2\subset \pi_1(\Sigma)$. 

Thus any curve representing the fundamental class $7e_3$ is of finite order in the cylindrical cover. By condition (vi) there exists a closed timelike curve $\gamma$ 
in $T^3$ with fundamental class $7e_3$. The lift $\gamma'$ of $\gamma$ to $Z$ has finite order and there exists a closed iterative of $\gamma'$. This clearly 
contradicts the causality property of $(Z,g')$.

To see why $(T^3,g)$ doesn't contain any closed transversal $1$-form, simply note that the sum of the causal future pointing closed curves 
$$\gamma_{1,2}\colon t\mapsto [(2,t,0)],[(5,-t,0)]$$ 
are nullhomologous. Therefore no closed form can be transversal to both loops.
\end{example}

For a spacetime to be of class A is purely a condition on the causal structure. So any spacetime globally conformal 
to a class A spacetime is class A as well.

Both conditions, viciousness and global hyperbolicity, on class A spacetimes are independent of each other in the sense that neither viciousness of $(M,g)$ 
implies the global hyperbolicity of $(\overline{M},\overline{g})$ (even if $\dim H_1(M,\R)>0$), 
nor does the global hyperbolicity of $(\overline{M},\overline{g})$ imply the viciousness of $(M,g)$.

Note that $\dim H_1(M,\R)>0$ for any class A spacetime. Else $\overline{M}$ would be a 
finite cover of $M$ and the causality of $(\overline M, \overline g)$ would be violated. This is due to the fact that 
any finite cover of a non-causal spacetime is again non-causal. In fact even more is true, any finite cover of a 
vicious spacetime is again vicious.

The global hyperbolicity of $(\overline M,\overline g)$ does not depend on the choice of a torsion free Abelian cover or the Abelian covering with torsion, i.e. if the 
group of deck transformations is isomorphic to $H_1(M,\Z)$ or $H_1(M,\Z)_\R\subset H_1(M,\R)$. In the subsequent discussion we will always assume that the 
group of deck transformations is isomorphic to the lattice $H_1(M,\Z)_\R$.

\begin{remark}\label{R0a}
A cover $(M',g')$ of a globally hyperbolic spacetime $(M,g)$ is always globally hyperbolic. Conversely a spacetime $(M,g)$ is globally hyperbolic if it is finitely 
covered by a globally hyperbolic spacetime.
\end{remark}

\begin{proof}
This follows directly from \cite{ha1}, proposition 1.4. 

\end{proof}

Note that the global hyperbolicity of the universal cover $(\widetilde{M},\widetilde{g})$ i.g. does not imply the global hyperbolicity of the  Abelian cover. An explicit 
example can be deduced from \cite{guediri2}.

\subsection{The stable time cone} 

Next we introduce the main technical object of these notes. Recall for $x,y\in \overline{M}$ the definition of $y-x\in H_1(M,\R)$ 
and $\rho(\gamma)$ for a Lipschitz curve $\gamma\colon [a,b]\to \overline{M}$ from section \ref{GMT}. Consider a future pointing curve $\gamma\colon [a,b]\to M$ 
parameterized by $g_R$-arclength. A sequence of such curves $\{\gamma_i\}_{i\in \mathbb N}$ is called admissible, if $L^{g_R}(\gamma_i)\to\infty$ for $i\to \infty$. 
$\T^1$ is defined to be the set of all accumulation points of sequences $\{\rho(\gamma_i)\}_{i\in\N}$ in $H_1(M,\R)$ of admissible sequences 
$\{\gamma_i\}_{i\in\N}$. $\T^1$ is compact for any compact spacetime since the stable norm of any rotation vector is bounded by $1$. Indeed by theorem \ref{3.1} 
we know that  
$$\|\rho(\gamma)\|=\frac{\|\gamma(b)-\gamma(a)\|}{L^{g_R}(\gamma)}\le 1+\frac{\std}{L^{g_R}(\gamma)}.$$
Now for any admissible sequence $\{\gamma_i\}$ we have $\|\rho(\gamma)\|\le 1+\frac{\std}{L^{g_R}(\gamma_i)}\to 1$ for $i\to \infty$. Therefore the stable norm of
all accumulation points is bounded by $1$. 

Further note that $\T^1$ is convex, if $(M,g)$ is vicious. This follows almost immediately from the following simple fact. 

\begin{note} \label{F1}
Let $M$ be compact and $(M,g)$ vicious.
Then there exists a constant $\fil(g,g_R)<\infty$ such that any two points $p,q\in M$ can be joined by a future 
pointing timelike curve with $g_R$-arclength less than $\fil(g,g_R)$.
\end{note}

Let $h,h'\in \T^1$ and $\lambda \in [0,1]$. Choose 
admissible sequences $\{\gamma_n\colon [0,T_n]\to M\}$ and $\{\eta_n\colon [0,S_n]\to M\}$ of future pointing curves with $\rho(\gamma_n)\to h$ and $\rho(\eta_n)
\to h'$. By note \ref{F1} we can assume that all $\gamma_n$ and $\eta_n$ are closed with initial point $p\in M$. Next choose sequences $\{k_n\},\{l_n\}\subseteq 
\mathbb{N}$ such that
$$\frac{k_n T_n}{k_nT_n+l_nS_n}\to \lambda$$
for $n\to\infty$. Then the sequence $\{\eta^{l_n}_n\ast \gamma^{k_n}_n\}_n$ is an admissible sequence of future pointing curves with 
$\rho(\eta^{l_n}_n\ast \gamma^{k_n}_n)\to \lambda h+(1-\lambda)h'$. This shows the convexity of $\T^1$.

We define the {\it stable time cone} $\T$ to be the cone over $\T^1$. Note that $\T$ does not depend on the choice of $g_R$, $\{k_1,\ldots ,k_b\}$ and $\omega_i\in
\alpha_i$, whereas $\T^1$ does. Reversing the time-orientation yields $-\T$ as stable time cone. $\T$ is invariant under global conformal changes of the metric 
and therefore depends only on the causal structure of $(M,g)$. It coincides with the cone of rotation vectors of structure cycles defined in appendix \ref{cones} 
(proposition \ref{T=C}). Here the cone structure is given by the future pointing tangent vectors. Further it is easy to see that this definition of 
$\T$ coincides with the ones given in section \ref{dipl} for globally conformally flat Lorentzian tori.

For compact and vicious spacetimes the stable time cone is characterized uniquely by the following property.

\begin{propositionloc}\label{P01a}
Let $(M,g)$ be a compact and vicious spacetime. Then $\T$ is the unique cone in $H_1(M,\R)$ such that there exists a constant $\err(g,g_R)<\infty$ with 
$\dist_{\|.\|}(J^+(x)-x,\T)\le \err(g,g_R)$ for all $x\in\overline{M}$, where $J^+(x)-x:=\{y-x|\;y\in J^+(x)\}$.
\end{propositionloc}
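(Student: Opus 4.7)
The plan is to interpret $\dist_{\|.\|}(J^+(x)-x,\T)$ as the Hausdorff distance, to establish both inclusions for $\T$ using viciousness and theorem~\ref{3.1}, and then to derive uniqueness from a scaling argument for closed convex cones.

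For the inclusion $J^+(x)-x\subseteq \T+B(\err)$: given $y\in J^+(x)$ and a future-pointing curve $\gamma\colon[0,T]\to\overline{M}$ from $x$ to $y$, I would invoke note~\ref{F1} to find a future-pointing curve $\delta$ in $M$ from $\overline{\pi}(y)$ to $\overline{\pi}(x)$ of $g_R$-length at most $\fil(g,g_R)$. Lifting $\delta$ starting at $y$ produces an endpoint $x'\in\overline{\pi}^{-1}(\overline{\pi}(x))$, and the projected loop $\gamma':=(\overline{\pi}\circ\gamma)\ast\delta$ in $M$ represents the class $x'-x\in H_1(M,\Z)_\R$. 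Iterating $\gamma'$ yields an admissible sequence of future-pointing curves with constant rotation vector $(x'-x)/L^{g_R}(\gamma')$, which is therefore an accumulation point of itself, lies in $\T^1$, and forces $x'-x\in\T$. By theorem~\ref{3.1} applied to $g_R$ on $\overline{M}$, $\|y-x'\|\le\dist(y,x')+\std\le L^{g_R}(\delta)+\std\le\fil(g,g_R)+\std$, so since $(y-x)-(x'-x)=y-x'$, we obtain $\dist_{\|.\|}(y-x,\T)\le\|y-x'\|\le\fil(g,g_R)+\std$.

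For the reverse inclusion $\T\subseteq(J^+(x)-x)+B(\err)$, I would cover the compact convex set $\T^1$ by finitely many rotation vectors $\rho(\sigma_i)$ of closed future-pointing loops $\sigma_i$ in $M$ (obtained from admissible sequences by closing up via note~\ref{F1}). Using the convexity of $\T^1$ together with Carath\'eodory's theorem, any $h=\lambda h_1\in\T$ can be approximated by an integer combination $\sum n_i[\sigma_i]$ up to bounded error, and this combination is realized as the endpoint difference $v-x$ of the lift at $x$ of the concatenation $\sigma_1^{n_1}\ast\cdots\ast\sigma_k^{n_k}$ joined at $\overline{\pi}(x)$ by short future-pointing paths supplied by note~\ref{F1}.

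Uniqueness then follows from the fact that two closed convex cones in a finite-dimensional normed space at bounded Hausdorff distance must coincide: if $\T'$ is another closed convex cone satisfying the same bounded-Hausdorff property, combining the two inclusions for both cones and the triangle inequality yields a uniform bound $R$ on their Hausdorff distance, so for any $h\in\T'$ and any $\lambda>0$, $\lambda h\in\T'$ lies within $R$ of $\T$, whence $\dist(h,\T)\le R/\lambda\to 0$ and $h\in\T$ by closedness. The main obstacle I expect is the reverse inclusion: bounding the rounding error from approximating $\lambda h_1$ by $\sum n_i[\sigma_i]$, together with the accumulated error from the short joining paths, uniformly in the scale $\lambda$, is the technical heart of the proof and requires a careful interplay between the convexity of $\T^1$ and the fill constant of note~\ref{F1}.
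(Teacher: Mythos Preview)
Your argument for the inclusion $J^+(x)-x\subseteq B_{\err}(\T)$ is correct and essentially identical to the paper's: close up via note~\ref{F1}, obtain a periodic future-pointing loop whose homology class lies in $\T$, and bound the discrepancy by $\fil(g,g_R)+\std$. Your uniqueness argument by scaling is also correct and is left implicit in the paper.

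The reverse inclusion $\T\subseteq B_{\err}(J^+(x)-x)$ is where your proposal has a genuine gap, and the obstacle you flag at the end is not merely technical---it is fatal to the Carath\'eodory approach as stated. Any finite family of closed future-pointing loops $\sigma_1,\ldots,\sigma_N$ has homology classes $k_i\in\T$, so $\pos\{k_1,\ldots,k_N\}$ is a polyhedral subcone of $\T$. If $\T$ is not polyhedral (and nothing in the hypotheses forces it to be), then along an extreme ray of $\T$ not contained in $\pos\{k_i\}$ the distance $\dist_{\|.\|}(\lambda h_1,\pos\{k_i\})$ grows linearly in $\lambda$, so no choice of integers $n_i$ will keep $\|\lambda h_1-\sum n_i k_i\|$ bounded uniformly in $\lambda$. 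Allowing the loops to depend on $h$ does not help either: approximating $h_1\in\T^1$ to accuracy $\delta$ by the rotation vector of a single loop forces the loop length $L$ to grow as $\delta\to 0$, and the resulting error $\lambda\delta+L$ cannot be made uniformly bounded.

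The paper circumvents this by a completely different mechanism due to Burago. One defines $\mathfrak{f}(h):=\min_{x}\dist(x+h,J^+(x))$ on $H_1(M,\Z)_\R$ and shows it is coarse-Lipschitz and satisfies $\mathfrak{f}(2h)\le 2\mathfrak{f}(h)$, $\mathfrak{f}(3h)\le 3\mathfrak{f}(h)$ (trivially by concatenation) as well as the crucial reverse inequality $2\mathfrak{f}(h)\le \mathfrak{f}(2h)+C$. The latter is obtained by applying a Borsuk--Ulam type lemma (lemma~\ref{lembu2}) to a future-pointing curve realizing $\mathfrak{f}(2h)$: one extracts at most $[b/2]$ subarcs whose total displacement is exactly half, and reassembles them with short connectors from note~\ref{F1}. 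Burago's lemma (lemma~\ref{lemmabu}) then yields a positively homogeneous $\mathfrak{a}$ with $|\mathfrak{f}-\mathfrak{a}|\le 2C$, and one identifies $\mathfrak{a}(h)=\dist_{\|.\|}(h,\T)$. This gives the uniform bound on $\dist_{\|.\|}(\T,J^+(x)-x)$ directly, without ever trying to realize points of $\T$ by explicit positive integer combinations of a fixed finite set of loop classes.
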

Compare this result to theorem \ref{3.1}. We will give a proof of proposition \ref{P01a} in section \ref{sec3}. 

\begin{propositionloc}\label{C16}
If $(M,g)$ is vicious, the open interior $\T^\circ$ of $\T$ is nonempty.
\end{propositionloc}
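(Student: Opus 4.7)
The plan is to argue by contradiction, using the duality between $\T$ and cohomology provided by Proposition \ref{T=C} together with viciousness.

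Suppose $\T^\circ = \emptyset$. Since $\T$ is a closed convex cone in $H_1(M,\R)$ (convexity having just been established), it then lies in a proper linear subspace, so there exists a class $\alpha \in H^1(M,\R)\setminus\{0\}$ with $\langle \alpha, \rho\rangle = 0$ for every $\rho \in \T$. By Proposition \ref{T=C}, $\T$ coincides with the cone of rotation vectors of Sullivan structure cycles supported in $\Time(M,[g]) \cup \Light(M,[g])$. The standard Hahn--Banach/Sullivan duality, applied to the conditions $\alpha \ge 0$ and $-\alpha \ge 0$ on this cone of cycles, furnishes closed $1$-form representatives $\omega_+$ of $\alpha$ and $\omega_-$ of $-\alpha$ with $\omega_\pm(v)\ge 0$ for every future-pointing causal vector $v$. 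The sum $\omega_+ + \omega_-$ is then an exact form, $\omega_+ + \omega_- = df$, which is nonnegative on every future-pointing causal vector.

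Here viciousness enters. Given $p \in M$ and $v \in \Time(M,[g])_p$, I flow along $v$ for a short time to some nearby point $p'$, and then close the curve by a future-pointing timelike segment from $p'$ back to $p$ of $g_R$-length at most $\fil(g,g_R)$, whose existence is guaranteed by note \ref{F1}. The concatenation is a future-pointing timelike loop at $p$ with initial tangent $v$. Along this loop $df \ge 0$, so $f$ is nondecreasing; since the loop is closed, $f$ is constant, and therefore $df_p(v) = 0$. As $v \in \Time(M,[g])_p$ was arbitrary and this fibre is open in $T_pM$, the linear functional $df_p$ vanishes on an open set and hence $df_p \equiv 0$; so $df \equiv 0$ on $M$. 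Consequently $\omega_+ = -\omega_-$, which forces $\omega_+$ to be simultaneously $\ge 0$ and $\le 0$ on $\Time(M,[g])\cup\Light(M,[g])$, hence to vanish there. Openness of the fibres then forces $\omega_+ \equiv 0$ on $TM$, contradicting $[\omega_+] = \alpha \ne 0$.

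The main obstacle I expect is establishing the Sullivan duality in exactly the form invoked above, namely turning the annihilation condition $\alpha|_\T = 0$ into signed representatives $\omega_\pm$; this is precisely what Proposition \ref{T=C} and the appendix are set up to deliver. Once that duality is in hand, the viciousness ingredient is packaged in note \ref{F1} and everything else reduces to the openness of the timelike cone.
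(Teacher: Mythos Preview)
Your route is genuinely different from the paper's. The paper argues constructively: it introduces $\T_p\subseteq H_1(M,\Z)_\R$, the classes represented by future-pointing timelike loops through $p$, shows through a loop-filling construction (Proposition~\ref{mainc} and Lemma~\ref{1.5}) that for each $k\in\bigcap_{p}\T_p$ there is $\e_k>0$ with $B_{n\e_k}(x+nk)\subseteq I^+(x)$ for all $x\in\overline M$ and $n\in\N$, and then observes that for $n$ with $n\e_k\ge\diam(M,g_R)$ every integer class near $nk$ lies in $\bigcap_p\T_p$, so this set contains a basis of $H_1(M,\R)$ and $\T^\circ\neq\emptyset$.

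Your contradiction argument, by contrast, has a real gap at the step where you invoke Sullivan duality to produce the representatives $\omega_\pm$. Theorem~\ref{TS10}(iv)(b) in the appendix states precisely that the \emph{interior} $(\T^\ast)^\circ$ consists of classes of closed \emph{transversal} (strictly positive) forms. Your class $\alpha$ annihilates $\T$ and therefore sits on $\partial\T^\ast$; what you need is a closed representative which is merely $\ge 0$ on the causal cone, and nothing in Proposition~\ref{T=C} or the appendix delivers this. It is also not a routine Hahn--Banach consequence: separating the affine space $\{\omega:d\omega=0,[\omega]=\alpha\}$ from the cone $\{\omega:\omega|_{\overline{\Time}}\ge 0\}$ only yields a nonzero structure cycle $c$ with $\langle\alpha,[c]\rangle\le 0$, and since you already know $\alpha|_\T=0$ this gives $\langle\alpha,[c]\rangle=0$ and no contradiction. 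Approaching $\alpha$ from the interior via $\alpha+\epsilon\beta$ with $\beta\in(\T^\ast)^\circ$ does not help either, because there is no a~priori control on the transversal representatives $\omega_\epsilon$ allowing one to extract a limit in the infinite-dimensional space of closed forms. Your closing remark that ``this is precisely what Proposition~\ref{T=C} and the appendix are set up to deliver'' is therefore not accurate: the appendix is tailored to strict transversality for interior classes, and the non-strict boundary case you need is an independent (and nontrivial) statement.
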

We will give a proof of this proposition in section \ref{rc}.

\subsection{Structure results}

The spacetimes $(\R\times N,-dt^2+\pi_N^\ast g_N)$ in example \ref{E0} are special cases of a more general class of spacetimes that is known in 
the literature as {\it SCTP spacetimes}. 

\begin{definitionloc}[\cite{ga84}]\label{DSCTP}
A spacetime $(M',g')$ is {\it SCTP (spatially closed time-periodic)} if 
\begin{itemize}
\item[(1)] $M'$ contains a compact spacelike Cauchy hypersurface $\Sigma$,
\item[(2)] there exists a discrete group of isometries $\psi_n\colon M'\to M'$, $n\in\Z$, such that $\Sigma_n\subseteq I^-(\Sigma_{n+1})$ and $M'=\cup_{n\in\Z}
J^+(\Sigma_n)\cap J^-(\Sigma_{n+1})$, where $\Sigma_n=\psi_n(\Sigma)$, and
\item[(3)] for each $p\in \Sigma$ there exists a positive integer $n$ such that $\psi_n(p)\in I^+(p)$.
\end{itemize}
\end{definitionloc}

It is now easy to see that every spacetime $(\R\times N,-dt^2+\pi_N^\ast g_N)$ is SCTP. First every slice $\Sigma_t:=\{t\}\times N$ is a compact Cauchy 
hypersurface. The isometries $\psi_n$ are the iterates of the isometry $\psi\colon (t,x)\to (t+1,\phi(x))$. Then $\Sigma_{t+n}$ is the image of the Cauchy 
hypersurface $\Sigma_t$ under $\psi_n$. This implies immediately $\Sigma_n\subseteq I^-(\Sigma_{n+1})$. Since $J^+(\Sigma_n)=[n,\infty)\times N$
and $J^-(\Sigma_n)=(-\infty,n]\times N$ we further obtain $\R\times N=\cup_{n\in\Z} J^+(\Sigma_n)\cap J^-(\Sigma_{n+1})$. Finally part (3) of definition \ref{DSCTP}
has already been verified for $(\R\times N,-dt^2+\pi_N^\ast g_N)$ in example \ref{E0}.

Note that the definition of SCTP spacetimes implies immediately that the group $G=\{\psi_n\}_{n\in\Z}$ operates free and properly discontinuously on $M'$, i.e. 
$M'$ induces on $M'/G$ the structure of a differentiable manifold. Indeed observe that by part (2) of definition \ref{DSCTP} every orbit $G_p$ intersects the open set 
$I^+(\Sigma_{n-1})\cap I^-(\Sigma_{n+1})$ once if $p\in \Sigma$, otherwise twice. By (2) in definition \ref{DSCTP} the family $\{I^+(\Sigma_{n-1})\cap 
I^-(\Sigma_{n+1})\}_{n\in\Z}$ forms an open cover of M'.

We denote with $\T^\ast$ the {\it dual stable time cone} of $\T$, i.e.
$$\T^\ast:=\{\alpha\in H^1(M,\R)|\;\alpha|_{\T}\ge 0\}.$$

\begin{theoremloc}\label{stab2}
Let $(M,g)$ be compact and vicious. Then the following statements are equivalent:
\begin{itemize}
\item[(i)] $(M,g)$ is of class A.
\item[(ii)] $0\notin \T^1$, especially $\T$ is a compact cone (see appendix \ref{cones}).
\item[(iii)] The open interior $(\T^\ast)^\circ$ of $\T^\ast$ is nonempty and for every $\alpha \in (\T^\ast)^\circ$ there exists a smooth $1$-form $\omega\in\alpha$ 
such that $\ker\omega_p$ is spacelike in $(TM_p,g_p)$ for all $p\in M$, i.e. $\omega$ is a closed transversal form for the cone structure of future pointing 
vectors in $(M,g)$.
\item[(iv)] $(M,g)$ admits a covering $(M',g')\to (M,g)$ by a SCTP spacetime $(M',g')$.
\end{itemize}
\end{theoremloc}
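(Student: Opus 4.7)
The plan is to establish the cycle (i)$\Rightarrow$(ii)$\Rightarrow$(iii)$\Rightarrow$(iv)$\Rightarrow$(i). The implication (iv)$\Rightarrow$(i) is the shortest: if $(M',g')\to(M,g)$ is a covering by a SCTP spacetime, its deck group is Abelian (generated by the $\Z$-action $\psi_n$), so $\pi_1(M')\supseteq [\pi_1(M),\pi_1(M)]$. Hence $\overline{M}$ factors through $M'$, and since $(M',g')$ is globally hyperbolic with compact Cauchy hypersurface, the first part of Remark \ref{R0a} gives global hyperbolicity of $\overline{M}$.

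For (i)$\Rightarrow$(ii) I argue by contrapositive. Suppose $0\in\T^1$ and choose an admissible sequence $\gamma_n\colon [0,T_n]\to M$ of future pointing $g_R$-arclength parameterized curves with $T_n\to\infty$ and $\rho(\gamma_n)\to 0$. Lifting to $\overline{M}$ and invoking theorem \ref{3.1}, the Riemannian displacement of endpoints is at most $\|\gamma_n(T_n)-\gamma_n(0)\|+\std=o(T_n)+\std$. Translating by deck transformations so that $\overline{\gamma}_n(0)$ lies in a compact fundamental domain and passing to subsequences, we find future pointing curves of unbounded $g_R$-length trapped in a bounded region of $\overline{M}$. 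Choosing any Cauchy temporal function $\tau$ on $\overline{M}$ and using that the future cone is strictly separated from the hyperplane $\ker d\tau$ on any compact set, one obtains a uniform lower bound $|d\tau(\dot\gamma)|\ge c|\dot\gamma|_{g_R}$ along future pointing curves inside that set, so $T_n$ is controlled by the oscillation of $\tau$ on a bounded set---a contradiction.

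For (ii)$\Rightarrow$(iii), $\T$ is a closed convex cone (convexity from viciousness via Fact \ref{F1}) which is pointed because $0\notin\T^1$. Standard convex duality then gives $(\T^\ast)^\circ=\{\alpha\in H^1(M,\R)\colon\alpha|_{\T\setminus\{0\}}>0\}\ne\emptyset$. To realize such an $\alpha$ by a transversal 1-form I appeal to Sullivan's structure cycle theory (via proposition \ref{T=C}), which identifies $\T$ with the cone of rotation vectors of structure cycles subordinate to the future cone bundle. Positivity of $\alpha$ on $\T\setminus\{0\}$ translates into strict positivity on nonzero structure cycles, and Sullivan's transversal form theorem then produces a closed smooth $\omega\in\alpha$ strictly positive on every nonzero future causal vector; its kernel is therefore spacelike everywhere. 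For (iii)$\Rightarrow$(iv), density of integer classes in the open cone $(\T^\ast)^\circ$ lets me pick an integer $\alpha$ with a transversal representative $\omega$. Integrating $\omega$ yields a fibration $M\to S^1$ whose associated infinite cyclic cover $M'\to M$ makes $\omega=d\tau'$ exact. Transversality forces $\tau'$ to be a temporal function with spacelike compact level sets; Cauchy-ness of these level sets follows since any inextendible causal curve in $M'$ projects to one in $M$ along which $\tau'$ is strictly monotone and unbounded, because its $g_R$-length grows linearly in $\tau'$. The $\Z$-action by deck transformations satisfies $\tau'\circ\psi_1=\tau'+1$, yielding SCTP condition (2); condition (3) follows from viciousness of $M$.

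The main obstacle is (ii)$\Rightarrow$(iii): setting up the correspondence between $\T$ and the Sullivan cone of structure cycles rigorously, and invoking the transversal form theorem in the smooth category for a closed manifold with a cone bundle of the specific form $\Light(M,[g])\cup\Time(M,[g])$, is by far the most technical step. The length-control argument in (i)$\Rightarrow$(ii) is also delicate: one must verify that in a globally hyperbolic spacetime, future pointing curves in a fixed compact set admit a uniform Lipschitz bound on the Cauchy time function, which requires some attention to the local geometry of the light cone.
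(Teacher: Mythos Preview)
Your arguments for (ii)$\Rightarrow$(iii) and (iii)$\Rightarrow$(iv) match the paper's, and your (iv)$\Rightarrow$(i) is a legitimate shortcut: once one interprets the SCTP covering as the quotient $M=M'/\{\psi_n\}$ (as the paper does throughout), the deck group is abelian, so $\overline{M}$ factors through $M'$ and inherits global hyperbolicity via Remark~\ref{R0a}. The paper instead closes the cycle with (iv)$\Rightarrow$(ii) directly, but your route is shorter.

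The real problem is your (i)$\Rightarrow$(ii). From $\rho(\gamma_n)\to 0$ you only get $\|\gamma_n(T_n)-\gamma_n(0)\|=o(T_n)$, which is \emph{unbounded} as $n\to\infty$. So after translating the initial points into a fundamental domain, the endpoints are not confined to any fixed bounded set, and your sentence ``future pointing curves of unbounded $g_R$-length trapped in a bounded region'' is not justified. Your attempted contradiction then collapses: a generic Cauchy temporal function $\tau$ on $\overline{M}$ need not satisfy $d\tau(v)\ge c|v|$ uniformly, since $d\tau$ need not descend to the compact $M$; the uniform separation of $\ker d\tau$ from the light cone holds only on compact subsets, and your curves escape every compact subset. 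Assuming such a uniform $c$ exists is essentially assuming a transversal closed $1$-form on $M$, i.e.\ assuming (iii), which you have not yet proved from (i).

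The paper handles this implication with considerably more machinery. It first establishes Proposition~\ref{P01a} (the ``coarse asymptotics'' $\dist_{\|.\|}(J^+(x)-x,\T)\le\err(g,g_R)$), which requires a Burago-type argument using Lemma~\ref{lembu2}. With that in hand it first shows $\T$ contains no nontrivial linear subspace (otherwise one builds arbitrarily long causal curves with uniformly bounded endpoint displacement, contradicting global hyperbolicity). Only then, having a linear functional $\alpha$ with $\alpha(h)\ge\varepsilon\|h\|$ on $\T$, does it take an admissible sequence with $\|\rho(\gamma_n)\|\le 1/n$, partition each $\gamma_n$ into subarcs of length $\sim n$, and use $\alpha$ to locate a subarc with \emph{uniformly bounded} endpoint displacement. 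It is this partition step that converts $o(T_n)$ into $O(1)$ and makes the compactness contradiction go through; your sketch omits it.
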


\begin{remark}
The proof of theorem \ref{stab2} will show that we can choose the covering $M'$ to be normal with $\mathcal{D}(M',M)\cong \Z$.
\end{remark}

The proof of theorem \ref{stab2} will be given in section \ref{proof}. Next we will discuss some applications of theorem \ref{stab2}. Note that the assumption of 
viciousness is essential in theorem \ref{stab2} by example \ref{E1}. 

The theorem implies immediately that the notions of SCTP spacetime and class A spacetime are equivalent in the sense that every time-orientable quotient of a 
SCTP spacetime is class A and every class A spacetime is covered by a SCTP spacetime.

\begin{corollaryloc}\label{1.10}
Let $(M,g)$ be of class A. Then there exists a constant $C_{g,g_R}<\infty$ such that 
$$L^{\overline{g}_R}(\gamma)\le C_{g,g_R}\dist(p,q)$$ 
for all $p,q\in \overline{M}$ and $\gamma$ causal connecting $p$ with $q$, where $L^{\overline{g}_R}$ denotes the length functional of the lifted Riemannian 
metric $\overline{g}_R$.
\end{corollaryloc}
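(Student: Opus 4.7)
The plan is to exploit Theorem~\ref{stab2}(iii), which guarantees that in the class A setting every $\alpha\in(\T^\ast)^\circ$ admits a smooth representative $\omega\in\alpha$ whose kernel $\ker\omega_p\subset T_pM$ is spacelike for all $p\in M$; in particular $\omega_p$ is strictly positive on the nonzero future causal cone $\Time(M,[g])_p\cup(\Light(M,[g])_p\setminus\{0\})$, since a spacelike hyperplane meets this cone only at the origin. I would fix such an $\omega$ and then use compactness of $M$ together with compactness of the unit future causal cone bundle to extract constants $c>0$ and $M_\omega<\infty$ depending only on $g$ and $g_R$ with
$$\omega_p(v)\ge c|v|\quad\text{for every future causal }v\in T_pM,\qquad \|\omega_p\|^\ast_{g_R}\le M_\omega\text{ for all }p\in M.$$

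Next I would promote $\omega$ to a globally defined Lipschitz function on $\overline M$ by showing that the pullback $\overline\omega:=\overline\pi^\ast\omega$ is exact. Since $\pi_1(\overline M)$ equals the commutator subgroup $[\pi_1(M),\pi_1(M)]$, the projection $\overline\pi\circ\overline\eta$ of any loop $\overline\eta\subset\overline M$ has trivial image in $H_1(M,\Z)$, hence
$$\int_{\overline\eta}\overline\omega=\int_{\overline\pi\circ\overline\eta}\omega=\langle[\omega],[\overline\pi\circ\overline\eta]\rangle=0.$$
By de Rham $\overline\omega=d\tau$ for some smooth $\tau\colon\overline M\to\R$, and the uniform cotangent bound $\|d\tau\|^\ast_{\overline g_R}\le M_\omega$ forces $\tau$ to be globally $M_\omega$-Lipschitz with respect to $\overline g_R$.

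The estimate then follows immediately. Given a causal curve $\gamma\colon[0,T]\to\overline M$ from $p$ to $q$ parametrized by $\overline g_R$-arclength, the positivity bound integrates to
$$\tau(q)-\tau(p)=\int_0^T\overline\omega(\dot\gamma(t))\,dt\ge cT=c\,L^{\overline g_R}(\gamma),$$
while the Lipschitz bound gives $\tau(q)-\tau(p)\le M_\omega\dist(p,q)$. Combining the two yields the corollary with $C_{g,g_R}:=M_\omega/c$. The only step that uses the class A hypothesis in an essential way is the production of the pointwise transversal representative $\omega$ via Theorem~\ref{stab2}(iii); the remaining ingredients are compactness of $M$ and a routine de Rham argument. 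I expect the exactness of $\overline\omega$ on $\overline M$ to be the step requiring the most care, as it relies on the precise choice of Abelian cover to ensure that all periods of $\overline\omega$ vanish.
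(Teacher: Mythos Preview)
Your proposal is correct and follows essentially the same route as the paper: invoke Theorem~\ref{stab2}(iii) to obtain a closed transversal $1$-form $\omega$, use compactness of $M$ to get the pointwise bounds $\omega(v)\ge c|v|$ on future causal vectors and $\|\omega_p\|^\ast\le M_\omega$, lift to a primitive $\tau$ on $\overline M$, and sandwich $\tau(q)-\tau(p)$ between $c\,L^{\overline g_R}(\gamma)$ and $M_\omega\dist(p,q)$. The paper's proof is terser---it simply writes ``let $\tau_\omega$ be a primitive of $\overline\pi^\ast\omega$'' without your de Rham justification---but the argument is otherwise identical.
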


\begin{proof}
By theorem \ref{stab2}(iii) the condition of class A is equivalent to the existence of a closed $1$-form $\omega$ such that $\ker\omega_p$ is spacelike for all 
$p\in M$. Since the cone of future pointing vectors is a compact cone in every tangent space, there exists a constant $c>0$ such that $\omega(v)\ge c |v|$ for all 
future pointing $v\in TM$.
Let $\tau_\omega\colon \overline{M}\to \R$ be a primitive of the pullback of $\omega$ to $\overline{M}$, i.e. $d\tau_\omega =\overline{\pi}^\ast \omega$. 
Let $p,q\in \overline{M}$ and $\gamma$ causal connecting $p$ with $q$. Then we have
$$c\cdot L^{\overline{g}_R}(\gamma)\le \tau(q)-\tau(p)\le \sup_{x\in M}\{|\omega_x|^\ast\} \dist(p,q),$$
where $|.|^\ast$ denotes the dual norm of $|.|$.

\end{proof}

\begin{corollaryloc}\label{C1}
Let $M$ be a closed manifold with $\chi(M)=0$. Then the set of class A metrics in $\Lor(M)$ is nonempty if and only 
if $M$ is diffeomorphic to a mapping torus over a closed manifold $N$. Further any class A spacetime gives rise to 
a foliation by smooth compact spacelike hypersurfaces.
\end{corollaryloc}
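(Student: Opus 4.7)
The corollary has three claims: class A implies $M$ is a mapping torus, every mapping torus admits a class A metric, and every class A spacetime admits the claimed foliation. The first and third follow together from theorem \ref{stab2}(iii). By that theorem the open cone $(\T^\ast)^\circ \subset H^1(M, \R)$ is nonempty, and by density of rational cohomology it contains an integer class $\alpha \in H^1(M, \Z)$; theorem \ref{stab2}(iii) then supplies a smooth representative $\omega \in \alpha$ whose kernel is everywhere spacelike, so $\omega$ is nowhere zero. Integrality of $\alpha$ lets me integrate $\omega$ to a smooth map $F \colon M \to \R/\Z$ with $dF = \omega$; this is a submersion and hence, by compactness of $M$ and Ehresmann's theorem, a locally trivial fibre bundle, i.e.\ a mapping torus of the monodromy on $N := F^{-1}(0)$. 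The fibres of $F$, being compact smooth hypersurfaces with tangent space $\ker \omega$ spacelike, also provide the foliation.

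For the converse, let $M = M_\phi = (\R \times N) / \Z$ with $\psi(t, x) = (t + 1, \phi(x))$. The plan is to construct a $\Z$-invariant Lorentzian metric $\overline g = -\epsilon^{-1} dt^2 + h_t$ on $\R \times N$, where $h_t$ is a smooth family of Riemannian metrics on $N$ satisfying the periodicity $h_{t+1} = (\phi^{-1})^\ast h_t$; such a family is produced by smoothly interpolating between $h_0$ and $(\phi^{-1})^\ast h_0$ on $[0, 1]$ with vanishing endpoint derivatives, then extending by the relation. The slices $\{t\} \times N$ are Cauchy, so $(\R \times N, \overline g)$ is globally hyperbolic. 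Since $\pi_1(M)/\pi_1(N) \cong \Z$ is abelian, $[\pi_1(M), \pi_1(M)] \subseteq \pi_1(N)$, and the Abelian cover $\overline M$ factors as $\overline M \to \R \times N \to M$; remark \ref{R0a} then yields global hyperbolicity of $\overline M$.

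The main obstacle is viciousness of the quotient $(M, g)$, which I plan to handle as follows. Given $(t_0, x_0) \in \R \times N$ and $n \in \N$, construct $\gamma_N \colon [t_0, t_0 + n] \to N$ from $x_0$ to $\phi^n(x_0)$ by concatenating, on each unit interval $[t_0 + k, t_0 + k + 1]$, the $h_{t_0 + k}$-minimizing geodesic from $\phi^k(x_0)$ to $\phi^{k+1}(x_0)$ at constant speed. The periodicity makes $\phi^{-k} \colon (N, h_{t_0 + k}) \to (N, h_{t_0})$ an isometry, so the $h_{t_0 + k}$-length of the $k$-th segment equals $\dist_{h_{t_0}}(x_0, \phi(x_0))$, hence is bounded by $D^\ast := \sup_{t, x} \dist_{h_t}(x, \phi(x)) < \infty$ (finite by periodicity and compactness of $N$). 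A similar compactness and periodicity argument yields a constant $K$ such that $h_{s_2} \le K h_{s_1}$ whenever $|s_1 - s_2| \le 1$, so $|\dot\gamma_N|_{h_t} \le \sqrt K D^\ast$ uniformly in $k$. Choosing $\epsilon$ small enough that $\epsilon K (D^\ast)^2 < 1$, the lift $s \mapsto (s, \gamma_N(s))$ becomes $\overline g$-timelike and projects to a timelike loop through $(t_0, x_0)$ in $(M, g)$, establishing viciousness.
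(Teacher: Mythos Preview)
Your proof is correct and follows essentially the same strategy as the paper: both directions rely on theorem \ref{stab2}(iii) for the forward implication (integer class, transversal closed form, fibration over the circle), and for the converse both build a product-type Lorentzian metric on the cylinder $\R\times N$ and verify viciousness by constructing explicit timelike curves from $(t,x)$ to a translate $(t+n,\phi^n(x))$. The implementation differs only in minor details --- you invoke Ehresmann and remark \ref{R0a} where the paper uses an explicit flow and the implication (iv)$\Rightarrow$(i) of theorem \ref{stab2}, and you tune the metric parameter $\epsilon$ where the paper instead tunes the iterate $n$ --- but the architecture is the same.
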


\begin{remark}
In the light of the differential splitting theorem for globally hyperbolic spacetimes (\cite{besa3}), the corollary is not completely surprising. In fact one could have 
expected a similar result for compact spacetimes which are covered by a globally hyperbolic one. That it fails, if one drops the assumption of viciousness, is the 
subject of example \ref{E1}.
\end{remark}

\begin{proof}[Proof of Corollary \ref{C1}]
(i) Let $(M,g)$ be of class A. Choose a cohomology class $\alpha$ with representative $\omega$ according to theorem \ref{stab2}(iii). W.l.o.g. 
we can assume that $\alpha(H_1(M,\Z)_\R)\subset \Z$. Let $\tau_\omega\colon \overline{M}\to \R$ be a primitive of $\overline{\pi}^\ast \omega$. 
By our choice of $\alpha$ every levelset $\tau_\omega^{-1}(t)$ descends to a compact hypersurface $\Sigma_t$ in $M$.

Denote with $\omega^\sharp$ the pointwise $g_R$-dual of $\omega$ and set 
$$X^\omega:=\frac{1}{g_R(\omega^\sharp,\omega^\sharp)}\omega^\sharp.$$
For the flow $\Phi^\omega$ of $X^\omega$ we know that $\Phi^\omega(.,s)\colon \Sigma_t \to \Sigma_{t+s}$ ($\omega(X^\omega)\equiv 1$). 
Choose $k\in H_1(M;\Z)_\R$ with $\alpha(k)=\min\{|\alpha(k')||\; k'\in H_1(M,\Z)_\R,\; \alpha(k')\neq 0\}$. Then $\Sigma_t=\Sigma_{t+\alpha(k)}$ 
for all $t$ and $M$ is diffeomorphic to the mapping torus $\R\times_{\Phi(.,\alpha(k))}\Sigma_t$. 

Since we have chosen $\omega$ such that $\ker \omega_p =T(\Sigma_t)_p$ is spacelike, we obtain a foliation of $M$ by by compact spacelike hypersurfaces.

(ii) Let $\R\times_\phi N$ be a mapping torus defined as the quotient of $\R\times N$ and the group of diffeomorphisms $\{(x,t)\mapsto (t+n,\phi^n(x))\}_{n\in \Z}$. 
Let $g_R$ be a Riemannian metric on $\R\times_\phi N$. We can  assume that the vector field $\partial_t'$ on $\R\times_\phi N$ induced by the embeddings 
$\R\hookrightarrow \R\times N$ is orthogonal to $N$ and of unit length. It is clear that 
$$g:=g_R-2(\partial_t')^\flat\otimes (\partial_t')^\flat$$
is a Lorentzian metric on $\R\times_\phi N$. Denote with $i_s\colon N\to \R\times_\phi N$, $x\mapsto [(s,x)]$ the natural embedding. Then by construction it is clear 
that $i_s^\ast g \equiv i_s^\ast g_R$ and $i_s(N)$ is a spacelike submanifold of $\R\times_\phi N$ for all $s\in \R$. $\partial_t'$ is timelike for $g$ and induces a 
time-orientation on $(\R\times_\phi N,g)$. It is now easy to see that the lift $g'$ of $g$ to $\R\times N$ yields a globally hyperbolic spacetime $(\R\times N, g')$. 

In order to show that $(\R\times_\phi N,g)$ is class A we have to verify that $(\R\times_\phi N,g)$ is vicious. Then theorem \ref{stab2} will imply that 
$(\R\times_\phi N,g)$ is class A. Choose $C_1<\infty$ with $i_s^\ast g_R(v,v)\le C_1 \cdot i_t^\ast g_R(v,v)$ for all $s,t\in \R$ and $v\in TN\setminus \{0\}$. 
Set $C_2:= \sup_{s}\diam (N,i_s^\ast g_R)$. Let $n> C_1\cdot C_2$ be an integer. For $(t,x)\in \R\times N$ choose a curve $\gamma_{(t,x)}\colon [0,1]\to N$ 
connecting $x$ with $\phi^n(x)$ such that $L^{i_t^\ast g_R}(\gamma_{(t,x)})\le C_2$ and parameterized w.r.t. constant $i_t^\ast g_R$-arclength. Define 
$\overline{\gamma}_{(t,x)}(\sigma):=(n\cdot \sigma +t, \gamma_{(t,x)}(\sigma))$. Then we have 
$$g'(\dot{\overline{\gamma}}_{(t,x)}(\sigma),\dot{\overline{\gamma}}_{(t,x)}(\sigma))=i^\ast_{t+n\sigma} g_R(\dot{\gamma}_{(t,x)}(\sigma),
\dot{\gamma}_{(t,x)}(\sigma))-n^2\le C_1^2 C_2^2 -n^2<0.$$
Hence $\overline{\gamma}_{(t,x)}$ is timelike. Since the endpoints of $\overline{\gamma}_{(t,x)}$ represent the same points in $\R\times_\phi N$, 
$\overline{\gamma}_{(t,x)}$ projects to a timelike loop around $[(t,x)]\in \R\times_\phi N$. This shows the viciousness of $(\R\times_\phi N,g)$ and finishes the 
proof.

\end{proof}

\begin{corollaryloc}\label{stab}
For every compact manifold $M$ the set 
$$\{g\in \Lor(M)|\; (M,g)\text{ is of class A}\}$$ 
is open in the $C^0$-topology on $\Lor(M)$.
\end{corollaryloc}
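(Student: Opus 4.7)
The strategy is to combine the equivalence (i) $\Leftrightarrow$ (iii) of theorem \ref{stab2} with the easier observation that each of the two conditions defining class A---viciousness and the existence of a closed transversal 1-form---is separately $C^0$-open around a given class A metric. Fix $(M,g)$ class A and, by theorem \ref{stab2}(iii), choose a closed smooth 1-form $\omega$ on $M$ with $\ker\omega_p$ spacelike everywhere, equivalently $\omega$ strictly positive on the cone of future pointing causal $g$-vectors.

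For stability of the transversal form, argue by sequential compactness. Suppose there were a sequence $g_n \to g$ in $C^0$ together with $g_n$-causal unit vectors $v_n \in T^{1,R}M$ satisfying $\omega(v_n) = 0$. A subsequential limit $v \in T^{1,R}M$ would then satisfy $g(v,v) = \lim g_n(v_n,v_n) \le 0$ and $\omega(v) = 0$, contradicting that $\omega$ is non-vanishing on the $g$-causal cone. Hence there is a $C^0$-neighborhood $V_1$ of $g$ such that for every $g' \in V_1$, $\omega$ is non-vanishing on every nonzero $g'$-causal vector. Since the set of $g'$-causal vectors has two connected components in the bundle sense, $\omega$ picks out a future cone, time-orients $(M,g')$, and remains transversal to this cone so that $\ker\omega$ is $g'$-spacelike everywhere.

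For stability of viciousness, fix $p \in M$. Pick a $g$-timelike loop $\gamma_p$ based at $p$ parametrized by $g_R$-arclength; by compactness of its image $g(\dot\gamma_p,\dot\gamma_p) \le -\delta_p$ for some $\delta_p > 0$. In a small chart $U_p$ around $p$ one can join any nearby $p' \in U_p$ to $p$ and back by short segments that are $g$-timelike with a positive margin; concatenating these with $\gamma_p$ yields a $g$-timelike loop through $p'$. All relevant inequalities are strict, so the construction survives a small $C^0$-perturbation of $g$, producing an open product neighborhood $Z_p \times W_p \subset M \times \Lor(M)$ of $(p,g)$ on which a timelike loop through the base point exists for every $g' \in W_p$. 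By compactness of $M$, cover it by finitely many $Z_{p_1}, \ldots, Z_{p_N}$ and set $V_2 := W_{p_1} \cap \cdots \cap W_{p_N}$; then every $g' \in V_2$ admits a timelike loop through each of its points and is therefore vicious.

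For $g' \in V_1 \cap V_2$, the spacetime $(M,g')$ is vicious and admits the closed transversal 1-form $\omega$, so by theorem \ref{stab2} (iii) $\Rightarrow$ (i) it is class A. The main technical obstacle is the uniform viciousness step: pointwise stability of a single timelike loop under a $C^0$-perturbation of the metric is elementary, but one must trade this for a single size of metric perturbation that works simultaneously for every point of $M$; this is precisely what the compactness argument accomplishes. The transversality step is comparatively straightforward, reducing to a compactness argument on the unit sphere bundle.
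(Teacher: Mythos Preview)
Your proof is correct and follows essentially the same two-step strategy as the paper: show that both viciousness and the existence of a closed transversal $1$-form are $C^0$-open conditions, then conclude. The paper handles the second step slightly differently: rather than invoking (iii) $\Rightarrow$ (i) as a black box, it directly verifies that the primitive $\tau_\omega$ of $\overline{\pi}^\ast\omega$ is a Cauchy temporal function for the perturbed metric $g_1$, effectively re-running the proof of (iii) $\Rightarrow$ (i). One small imprecision in your write-up: you cite ``(iii) $\Rightarrow$ (i)'', but condition (iii) as stated asks for $(\T^\ast)^\circ\neq\emptyset$ and a transversal representative in \emph{every} class of $(\T^\ast)^\circ$, neither of which you have checked for $g'$; what you have established---the existence of a single closed $g'$-transversal form---is precisely what the \emph{proof} of (iii) $\Rightarrow$ (i) actually uses, so your argument is valid, but you are invoking the proof rather than the statement. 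Your treatment of the viciousness step is more explicit than the paper's, which simply asserts openness with a reference to note \ref{F1}.
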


\begin{proof}[Proof of Corollary \ref{stab}]
The openness of the {\it viciousness} condition was already proven in note \ref{F1}. Consequently it remains to verify the condition {\it $(\overline M,\overline g)$ 
globally hyperbolic} is open in the $C^0$ topology on $\Lor(M)$ in the case that $(M,g)$ is vicious. 

Consider a smooth and closed $1$-form $\omega$ on $M$ such that $\ker\omega_p$ is spacelike for all $p\in M$. Next consider the set $\mathcal{G}(\omega)
\subset Lor(M)$ of metrics $g_1$ such that $\ker\omega_p$ is $g_1$-spacelike for all $p\in M$. $\mathcal{G}(\omega)$ is certainly an open neighborhood of $g$ in 
$Lor(M)$. 

Let $g_1\in \mathcal{G}(\omega)$. We want to show that the lift $\overline{g}_1$ of $g_1$ to $\overline{M}$ is globally hyperbolic. Since $\ker\omega_p$ is 
$g_1$-spacelike for all $p\in M$, any primitive $\tau_\omega\colon \overline{M}\to \R$ of $\overline{\pi}^\ast \omega$ is a temporal function for $(\overline{M},
\overline{g}_1)$. By the compactness of $M$ there exists $\e_1>0$ such that we have $|d\tau_\omega (v)|\ge \e_1 |v|$ for all $\overline{g}_1$-nonspacelike $v\in 
T\overline{M}$. 

Let $\gamma\colon \R\to \overline{M}$ be an inextendible $\overline{g}_1$-nonspacelike curve parameterized w.r.t. $\overline{g}_R$-arclength. W.l.o.g. we can 
assume that $\tau_\omega\circ \gamma$ is increasing, i.e. $d\tau_\omega(\dot{\gamma}(t))\ge\e_1 |\dot{\gamma}(t)|$ whenever $\dot{\gamma}(t)$ exists. Let 
$\Sigma:=\tau_\omega^{-1}(\sigma)$ be any level set of $\tau_\omega$. We want to show that $\gamma$ intersects $\Sigma$ exactly once. Then we are done, 
since by that property $\Sigma$ is a Cauchy hypersurface of $(\overline{M},\overline{g}_1)$. This is well known to be equivalent to the global hyperbolicity of 
$(\overline{M},\overline{g}_1)$.

Set $\sigma_0:=\tau_\omega(\gamma(0))$. For $r\ge \frac{|\sigma-\sigma_0|}{\e_1}$ we have 
$$|\tau_\omega(\gamma(\pm r))-\sigma_0|=|\int_0^{\pm r}d\tau_\omega (\dot{\gamma})|\ge \e_1 r\ge |\sigma-\sigma_0|.$$
Then $t$ is either contained in the interval $[\tau_\omega(\gamma(-r)),\sigma_0]$ or $[\sigma_0,\tau_\omega(\gamma(r))]$. By the intermediate value theorem 
$\gamma$ has to intersect $\Sigma$. Since $\tau_\omega$ is strictly increasing along $\gamma$, the intersection is unique.

\end{proof}

Recall the definition of standard SCTP spacetimes from the introduction. A spacetimes $(\R\times \Sigma, -f^2dt^2+ g_\Sigma)$, where $f$ is a positive function on 
$\R\times \Sigma$ and $g_\Sigma$ is a Riemannian metric on $\Sigma$ i.g. depending on the $t$-coordinate as well, is called standard SCTP spacetime if there 
exists an isometry $\psi \colon \R\times\Sigma\to \R\times \Sigma$ of the form $\psi(t,x)=(t+1,\phi(x))$ or $\psi(t,x)=(-t+1,\phi(x))$ for some diffeomorphism 
$\phi \colon \Sigma \to \Sigma$. 

With a slight modification of the constants in the proof of corollary \ref{C1} one sees that every standard SCTP spacetime is a special case of a SCTP spacetime.

\begin{corollaryloc}\label{C2}
Let $(M',g')$ be a SCTP spacetime with a smooth Cauchy hypersurface $\Sigma$. Then there exists an isometry $\Phi\colon (M',g') \cong (\R\times \Sigma,
-f^2dt^2 +g_\Sigma)$ to a standard SCTP spacetime such that $\Phi\circ \psi_n\circ \Phi^{-1} = (t,x)\mapsto  (t+n,\phi^n(x))$ if the group $\{\psi_n\}_{n\in\Z}$ is time 
orientation preserving and $(t,x)\mapsto ((-1)^n t+n, \phi^n(x))$ if $\{\psi_n\}_{n\in\Z}$ is time orientation reversing.
\end{corollaryloc}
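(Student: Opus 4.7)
The plan is to construct an equivariant Cauchy temporal function $\tau\colon M'\to\R$ satisfying $\tau\circ\psi_1=\tau+1$ (orientation-preserving case) or $\tau\circ\psi_1=-\tau+1$ (orientation-reversing case), and then use the normalized $g'$-gradient of $\tau$ to produce the splitting and the isometry $\Phi$.

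Setting $G:=\{\psi_n\}_{n\in\Z}$ and $M:=M'/G$ with quotient map $\pi_M\colon M'\to M$, I would invoke theorem \ref{stab2}(iii) applied to $M$ in the orientation-preserving case. That $M$ is class A follows from the implication (iv)$\Rightarrow$(i) of theorem \ref{stab2}, since $\pi_M$ exhibits $M'$ as a SCTP covering of $M$. Let $\alpha\in H^1(M,\R)$ be the integral cohomology class dual to the generator of $\mathcal{D}(M',M)\cong\Z$, normalized to have period $1$ over the generating loop; a direct argument using condition~(3) of definition \ref{DSCTP} yields $\alpha\in(\T^\ast)^\circ$, so theorem \ref{stab2}(iii) provides a smooth representative $\omega\in\alpha$ with $\ker\omega_p$ spacelike for every $p\in M$. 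Since $\pi_M^\ast\alpha=0$ in $H^1(M',\R)$, the pullback $\pi_M^\ast\omega$ is exact, and any primitive $\tau$ is a Cauchy temporal function on $M'$ with $\tau\circ\psi_1=\tau+1$. In the orientation-reversing case, the same construction applied to the index-$2$ subgroup of orientation-preserving elements in $G$ produces $\tau$ with $\tau\circ\psi_1^2=\tau+2$, and since $\psi_1$ is a time-reversing isometry exchanging the two sheets of the double cover, $\tau\circ\psi_1$ must equal $-\tau+1$ after an additive normalization.

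With $\tau$ in hand, set $X:=\nabla\tau/g'(\nabla\tau,\nabla\tau)$, a smooth future-directed timelike vector field that is $g'$-orthogonal to every level set $\Sigma_s:=\tau^{-1}(s)$ and satisfies $d\tau(X)\equiv 1$. Its flow $\phi^X_t$ is complete because $\tau$ is Cauchy, so $\Phi\colon\R\times\Sigma\to M'$, $(t,x)\mapsto\phi^X_t(x)$, is a diffeomorphism (writing $\Sigma:=\Sigma_0$). Orthogonality of $X$ to the level sets gives the splitting $\Phi^\ast g'=-f^2\,dt^2+g_t$ with $f^2:=-g'(X,X)>0$ and $g_t$ the pullback of the induced Riemannian metric on $\Sigma_t$. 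For equivariance, $\psi_1^\ast d\tau=\pm d\tau$ combined with the isometry property forces $\psi_{1\ast}X=\pm X$, hence $\psi_1\circ\phi^X_t=\phi^X_{\pm t}\circ\psi_1$. Setting $\phi:=\phi^X_{-1}\circ\psi_1|_\Sigma\colon\Sigma\to\Sigma$, a direct computation yields $\Phi^{-1}\circ\psi_1\circ\Phi(t,x)=(t+1,\phi(x))$ or $(-t+1,\phi(x))$ as claimed, and the formula for $\psi_n$ follows by iteration.

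The main technical obstacle is verifying that $\alpha\in(\T^\ast)^\circ$. Concretely one must show that every nonzero $h\in\T$ satisfies $\alpha(h)>0$, i.e.\ that any admissible sequence of future-pointing curves with nontrivial rotation vector wraps positively in the $\Z$-direction of $\pi_M$. This reduces to the observation that a future-pointing curve in $M'$ confined to a single fundamental slab $J^+(\Sigma_0)\cap J^-(\Sigma_1)$ has $g_R$-length bounded in terms of the compactness of this slab, so admissibility forces crossings in the positive direction, whence $\alpha(\rho(\gamma_i))$ is eventually bounded below by a positive constant. Once this is secured, the remaining steps are essentially routine calculations from the properties of $\tau$.
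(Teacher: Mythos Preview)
Your overall strategy coincides with the paper's: pass to the quotient $M=M'/G$, locate the relevant class $\alpha\in(\T^\ast)^\circ$, invoke theorem~\ref{stab2}(iii) to obtain a transversal closed $1$-form, pull it back to an equivariant Cauchy temporal function $\tau$, and use the normalized gradient flow to split. Your argument for $\alpha\in(\T^\ast)^\circ$ via the bounded $g_R$-length of causal curves in a fundamental slab is essentially the paper's proof of (iv)$\Rightarrow$(ii), whereas the paper's own proof of the corollary routes this step through proposition~\ref{P01a} and the limit curve lemma; both work. One cosmetic point: your $\Sigma$ is $\tau^{-1}(0)$, not the given Cauchy hypersurface, so you still need the observation (which the paper makes explicitly) that any two Cauchy hypersurfaces are diffeomorphic.

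There is, however, a genuine gap in the time-reversing case. From $\tau\circ\psi_1^2=\tau+2$ together with ``$\psi_1$ is a time-reversing isometry'' you cannot conclude $\tau\circ\psi_1=-\tau+1$. All that follows is that $d(\tau\circ\psi_1)=\psi_1^\ast d\tau$ is negative on future-pointing vectors, so $-\tau\circ\psi_1$ is \emph{some} temporal function; there is no reason it should equal $\tau$ up to an additive constant. Concretely, with $\omega'$ the pullback of your transversal form to $M'$, you would need $\psi_1^\ast\omega'=-\omega'$, and nothing in your construction arranges this. The paper repairs exactly this point by replacing $\omega'$ with the anti-symmetrized form $\omega'':=\omega'-\psi_1^\ast\omega'$. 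Then $\psi_1^\ast\omega''=-\omega''$ holds automatically (using $\psi_1^2\in G^e$), and $\ker\omega''$ is still spacelike because $\omega'$ is positive and $\psi_1^\ast\omega'$ is negative on future-pointing vectors. A primitive of $\omega''$ then satisfies the desired equivariance $\tau\circ\psi_1=-\tau+\mathrm{const}$, which can be normalized to $-\tau+1$.
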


\begin{remark}
The assumption of smoothness for the Cauchy hypersurface $\Sigma$ poses no restriction on the spacetimes considered. In fact one can easily show that any 
SCTP spacetime admits a smooth Cauchy hypersurface meeting the conditions in definition \ref{DSCTP}.
\end{remark}

\begin{proof}
(i) We will treat the case that $G:=\{\psi_n\}_{n\in\Z}$ is time orientation preserving first. Since $(M',g')$ is SCTP, the quotient $M:=M'/G$ together with the induced 
time orientable Lorentzian metric $g$ is class A according to theorem \ref{stab2}. 

By the definition of SCTP spacetimes the natural projection $\pi'\colon M'\to M$ restricted to $\Sigma$ is injective and the image of the map $(\pi'|_\Sigma)_\ast
\colon H_1(\Sigma,\R)\to H_1(M,\R)$ is a hyperplane in $H_1(M,\R)$. Indeed let $\gamma\colon S^1\to M$ be any loop. Recall that $M'$ is diffeomorphic to 
$\R\times \Sigma$ by the  Lorentzian splitting theorem. If the lift $\gamma'$ of $\gamma$ to $M'$ is closed, $\gamma'$ defines a homology class in $H_1(\Sigma,
\Z)$. If $\gamma'$ is not closed, it connects a point $x\in M'$ with $\psi_n(x)$ for some $n\in \Z$. By the same argument as before we see that for any other curve 
$\eta'$ between $x$ and $\psi_n(x)$ the loop $(\eta')^{-1}\ast \gamma'$ defines a homology class in $H_1(\Sigma,\Z)$. This shows that the image of 
$H_1(\Sigma,\R)$ under $(\pi'|_{\Sigma})_\ast$ is a rational hyperplane in $H_1(M,\R)$. 

Choose  $\alpha\in H^1(M,\R)$ with $\ker\alpha =H$. We claim that $\alpha$ or $-\alpha\in (\T^\ast)^\circ$. This will follow from proposition \ref{P01a}. 

It is clear that $\alpha$ or $-\alpha \in \T^\ast$ since else there exist timelike loops $\eta_1,\eta_2\subseteq M$ with homology classes $h_1,h_2\in \T$ satisfying
$\langle \alpha ,h_1\rangle >0$ and $\langle \alpha ,h_2\rangle <0$. From this one can easily construct futurepointing curves in $M'$ intersecting $\Sigma$ 
arbitrarily often, thus contradicting the Cauchy hypersurface property of $\Sigma$.  Choose $\alpha$ such that $\alpha|_{\T}\ge 0$.

Assume now that $\alpha \notin (\T^\ast)^\circ$. Then, according to proposition \ref{P01a}, there exist a sequence of admissible future pointing curves 
$\gamma_n\colon [-a_n,a_n]\to M$ such that 
$$\dist\nolimits_{\|.\|}(\gamma_n(a_n)-\gamma_n(-a_n),\ker\alpha) \le \err(g,g_R).$$ 
Now the limit curve lemma (\cite{bee} lemma 14.2) implies that there exists a subsequence that converges uniformly on compact subsets to an inextendable future 
pointing curve $\gamma\colon \R\to M$. Lift $\gamma$ to a future pointing curve $\gamma'\colon \R\to M'$. Note that since $(M,g)$ is vicious the homology class 
$\gamma(t)-\gamma(s)\in H_1(M,\R)$ $(s<t)$ stay within a bounded distance from $\ker\alpha$. 

Fix a curve $\eta'$ in $M'$ connected a point $x\in M'$ with $\psi_1(x)$ and denote with $h_1$ the homology class of the projection to $M$. 
By construction we have $\langle \alpha ,h_1\rangle  > 0$. This shows that $\gamma$ remains in a compact subset of 
$M'$ and thus contradicting the global hyperbolicity of $(M',g')$. This shows that $\alpha\in (\T^\ast)^\circ$.

Choose $\omega\in \alpha$ according to theorem \ref{stab2}, i.e. $\ker \omega_p$ is spacelike for all $p\in M$. Note that the pullback $\omega'$ of $\omega$
to $M'$ is exact, since it vanishes on $H_1(M',\R)\cong H_1(\Sigma,\R)$. Thus there exists a primitive $\tau\in C^\infty (M')$ of $\omega'$. $\tau$ is a Cauchy 
temporal function equivariant under the group of deck transformations $\mathcal{D}(M',M)\cong G$ by construction. By rescaling $\omega$ (and with it $\tau$) we 
can assume that $\tau(\psi_n(x))=\tau(x)+n$ for all $n\in \Z$. 

Since $\tau$ has no critical points, $(M',g')$ is isometric to the product $(\R\times \tau^{-1}(0),-fdt^2 +g_{\tau^{-1}(0)})$ where $f\in C^\infty(\R\times \Sigma)$ and
$g_{\tau^{-1}(0)}$ is a Riemannian metric depending i.g. on the $t$-coordinate.

Since $\Sigma$ as well as $\tau^{-1}(0)$ are Cauchy hypersurfaces in $(M',g')$, $\Sigma$ and $\tau^{-1}(0)$ are diffeomorphic (e.g. the image of $\Sigma$ in 
$\R\times \tau^{-1}(0)$ is the graph of a smooth function over $\tau^{-1}(0)$). Consequently $(M',g')$ is isometric via an isometry $\Phi$ to the spacetime 
$(\R\times \Sigma,-fdt^2 +g_{\Sigma})$ such that $\Phi\circ \psi_n\circ \Phi^{-1} = (t,x)\mapsto  (t+n,\phi^n(x))$ for all $n\in\Z$. 

(ii) If $G$ does not preserve the time orientation consider the subgroup $G^e :=\{\psi_{2n}\}_{n\in\Z}$. It is clear that $G^e$ preserves the time orientation. 
Now we apply the construction from above to the spacetime $(M'/G^e, g^e)$ where $g^e$ denotes the induced Lorentzian metric. The resulting $1$-form 
$\omega'$ is invariant under $G^e$. Choose a generator $\psi$ of $G$ and consider the exact $1$-form $\omega'':=\omega'-\psi^\ast \omega'$. 
$\omega''$ is invariant under $G$ and $\ker \omega''_{p'}$ is spacelike for all $p'\in M'$ since $\psi$ reverses the time orientation.
Now the result follows in the same way as for the time orientation preserving case except that $\Phi\circ \psi_n\circ \Phi^{-1} = (t,x)\mapsto  ((-1)^n t+n,\phi^n(x))$

\end{proof}

\subsection{The coarse-Lipschitz property}

When comparing Lorentzian geometry with Riemannian geometry the question of Lipschitz continuity of the time separation appears naturally. As Minkowski space 
shows this question has no general positive answer for neither the entire set $J:=\{(p,q)|\; q\in J^\pm (p)\}$ nor $I:=\{(p,q)|\; q\in I^\pm (p)\}$. It received some 
attention in the literature, though, in connection with the Cheeger-Gromoll splitting theorem for Lorentzian manifolds (see \cite{es}).

Recall the definition of $\T_\e:=\{h\in\T|\; \dist_{\|.\|}(h,\partial \T)\ge \e \|h\|\}$ for the cone $\mathfrak{K}=\T\subseteq H_1(M,\mathbb{R})$ and $\e >0$
from corollary \ref{C100}.

\begin{theoremloc}\label{T16}
Let $(M,g)$ be of class A. Then for every $\e >0$ there exists $L_c(\e)<\infty$, such that 
  $$|d(x,y)-d(z,w)|\le L_c(\e)(\dist(x,z)+\dist(y,w)+1)$$
for all $(x,y),(z,w)\in \overline{M}\times\overline{M}$ with $y-x,w-z\in \T_\e$.
\end{theoremloc}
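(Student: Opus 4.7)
The plan is to prove Theorem \ref{T16} by a cut-and-paste argument, typical in Aubry-Mather theory: given $(x,y)$ and $(z,w)$ with $y-x,w-z\in\T_\e$, and a near-maximizing causal curve $\gamma$ from $x$ to $y$, I would build a causal curve from $z$ to $w$ by removing a short initial segment of $\gamma$ near $x$, splicing in a future-timelike connection from $z$ to the cut point, and doing the same at the $y$ end with $w$. The constant $+1$ in the statement will absorb the viciousness ``filling time'' $\fil(g,g_R)$ from Note \ref{F1}.

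First I would set up a global time function: by Theorem \ref{stab2}(iii), since $(M,g)$ is class A there is a closed $1$-form $\omega$ on $M$ with $\ker\omega$ spacelike everywhere; its primitive $\tau$ on $\overline{M}$ is a Cauchy time function, and by compactness of $M$ there is $c>0$ with $\omega(v)\ge c|v|$ on all future causal vectors. Combined with Corollary \ref{1.10}, this gives linear comparison between Lorentzian arclength, Riemannian arclength, and $\tau$-height along any causal curve.

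The main technical step — and the principal obstacle — is what I will call the \emph{entrance lemma}: for every $\e>0$ there exists $C(\e)<\infty$ such that whenever $x\in\overline M$, $y-x\in\T_\e$, $\gamma\colon[0,T]\to\overline M$ is a future causal curve from $x$ to $y$ parameterized by Lorentzian arclength, and $z\in\overline M$ with $\dist(x,z)\le r$, one has $\gamma\bigl(\min(T,C(\e)(r+1))\bigr)\in I^+(z)$, and dually at the endpoint $y$. To prove this, I would argue by compactness: assuming the lemma fails, extract sequences $(x_n,z_n,y_n,\gamma_n)$ with $\dist(x_n,z_n)$ uniformly bounded and $s_n\to\infty$ with $\gamma_n(s_n)\notin I^+(z_n)$. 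Projecting to the compact $M$ and invoking the limit curve lemma in the globally hyperbolic cover $(\overline M,\overline g)$ produces a limiting future causal curve $\gamma_\infty$ with rotation vector in $\T_\e$. The viciousness of $M$ via Note \ref{F1}, combined with transitivity and openness of $I^+$ and the characterization of $\T$ in Proposition \ref{P01a}, forces $I^+(z_\infty)$ to eventually contain $\gamma_\infty(s)$ for $s$ large, contradicting the construction. Making this quantitative is the delicate point, since one must show that the ``chronological width'' of $I^+(z)$ around $\gamma$ grows linearly with $\tau$-height, uniformly along any direction in $\T_\e$.

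Once the entrance lemma is in hand, the conclusion is routine. Given $\delta>0$, choose $\gamma\colon[0,T]\to\overline M$ future causal from $x$ to $y$ with $T=L_g(\gamma)\ge d(x,y)-\delta$, parameterized by Lorentzian arclength. Apply the entrance lemma at both ends to obtain $s_1\le C(\e)(\dist(x,z)+1)$ with $\gamma(s_1)\in I^+(z)$, and $s_2\ge T-C(\e)(\dist(y,w)+1)$ with $w\in I^+(\gamma(s_2))$. If $s_1\ge s_2$, then $d(x,y)\le 2C(\e)(\dist(x,z)+\dist(y,w)+2)$ is immediate; otherwise concatenate a future-timelike curve from $z$ to $\gamma(s_1)$, the segment $\gamma|_{[s_1,s_2]}$, and a future-timelike curve from $\gamma(s_2)$ to $w$. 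This causal curve from $z$ to $w$ has Lorentzian arclength at least
$$s_2-s_1 \;\ge\; d(x,y) - \delta - 2C(\e)\bigl(\dist(x,z)+\dist(y,w)+2\bigr),$$
so $d(z,w)\ge d(x,y)-\delta-2C(\e)(\dist(x,z)+\dist(y,w)+2)$. Letting $\delta\to 0$ and interchanging the roles of $(x,y)$ and $(z,w)$ (which is legal because $w-z\in\T_\e$ as well) yields the theorem with $L_c(\e):=4C(\e)$.
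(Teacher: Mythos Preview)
Your entrance lemma, as stated for arbitrary future causal curves, is false. On the flat Lorentzian $2$-torus (a class A spacetime) take $x=0$, $z=(0,r)$, and let $\gamma$ first travel in the near-lightlike direction $(1,-(1-\delta))$ for Minkowski time $T_1$, then continue causally to some $y=(Y,0)$ with $y-x\in\T_\e$. Along the first leg the accumulated Lorentzian arclength at Minkowski time $t$ is $t\sqrt{2\delta-\delta^2}$, while $\gamma(t)\in I^+(z)$ requires $t>r/\delta$; hence $\gamma(s)\notin I^+(z)$ for all Lorentzian arclengths $s<r\sqrt{2/\delta}$. Letting $\delta\to 0$ defeats any putative bound $C(\e)(r+1)$. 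The hypothesis $y-x\in\T_\e$ constrains only the endpoint, not the initial direction of $\gamma$, so it gives you nothing here. Your limit-curve sketch cannot repair this: the limit of such curves is genuinely lightlike, and your contradiction step (``forces $I^+(z_\infty)$ to eventually contain $\gamma_\infty(s)$'') simply fails for a lightlike ray moving away from $z_\infty$.

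Restricting the entrance lemma to maximizers does not obviously save the argument either. The statement that maximizers with $y-x\in\T_\e$ have tangent uniformly bounded away from the light cone is precisely Theorem~\ref{TD21}, known only in the conformally flat case; for general class A spacetimes nothing prevents a maximizing geodesic from being nearly lightlike on an initial segment, and your compactness argument does not produce a \emph{uniform} $C(\e)$. The paper avoids this obstacle entirely by a different mechanism: it first reduces (by telescoping along a chain of points $w_i$) to the case $\dist(x,z),\dist(y,w)\le K_2$, then takes a maximizer $\gamma$ from $z$ to $w$, partitions it into pieces of comparable $\|\cdot\|$-size, and uses a Carath\'eodory-type selection (Lemma~\ref{F15}) to find $b=\dim H_1(M,\R)$ \emph{interior} subarcs whose summed rotation vector lies in $\T_\eta$ for a controlled $\eta>0$. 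Replacing those subarcs by short causal connectors of length $\le\fil(g,g_R)$ produces a curve $\gamma'$ from $z$ whose endpoint satisfies $w-\gamma'(T)\in\T$ with $\dist_{\|.\|}(w-\gamma'(T),\partial\T)\ge K(K_2)$, so Proposition~\ref{3.2} gives $y\in I^+(\gamma'(T))$ directly. The point is that the Carath\'eodory step manufactures a homological displacement lying \emph{well inside} $\T$ out of pieces that individually need not be; this is exactly what your endpoint-cutting strategy cannot guarantee.
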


The stronger question of Lipschitz continuity is unanswered at this point in this generality. Note that the assumptions 
of theorem \ref{T16} are not empty due to proposition \ref{C16}. 

The proof of theorem \ref{T16} consists of showing that future pointing curves $\gamma$ from $x$ to $y$ can be used to ``build''
future pointing curves from $z$ to $w$, with the additional property that the length of the part of $\gamma$, 
which has to be sacrificed in the construction, is congruent to $\dist(x,z)+\dist(y,w)+1$.
The arguments in the proof are similar to the socalled cut-and-paste arguments employed in \cite{ba}, \cite{ma} et al..

\section{Proof of Theorem \ref{stab2}}\label{proof}

The proof of theorem \ref{stab2} will be divided into several steps. The first steps will prove the implications (ii)$\Rightarrow$ (iii)$\Rightarrow$(i). In section 
\ref{sec2a} we will prove the implications (iii)$\Rightarrow$(iv) and (iv)$\Rightarrow$(ii). The implication (i)$\Rightarrow$(ii) is the subject of subsection \ref{sec3}.

Recall theorem \ref{stab2}:
\begin{theoremlocstar}
Let $(M,g)$ be compact and vicious. Then the following statements are equivalent:
\begin{itemize}
\item[(i)] $(M,g)$ is of class A.
\item[(ii)] $0\notin \T^1$, especially $\T$ is a compact cone.
\item[(iii)] $(\T^\ast)^\circ\neq\emptyset$ and for every $\alpha \in (\T^\ast)^\circ$ 
there exists a smooth $1$-form $\omega\in\alpha$ such that $\ker\omega_p$ is a spacelike in $(TM_p,g_p)$ for all 
$p\in M$, i.e. $\omega$ is a closed transversal form for the cone structure of future pointing 
vectors in $(M,g)$.
\item[(iv)] $(M,g)$ admits a covering $(M',g')\to (M,g)$ by a SCTP spacetime $(M',g')$.
\end{itemize}
\end{theoremlocstar}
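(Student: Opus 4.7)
The plan is to close the chain via (ii) $\Rightarrow$ (iii) $\Rightarrow$ (i), then (iii) $\Rightarrow$ (iv) $\Rightarrow$ (ii), and finally to handle the substantive direction (i) $\Rightarrow$ (ii) separately. For (ii) $\Rightarrow$ (iii): compactness of $\T$ as a convex cone (i.e.\ $0\notin\T^1$) together with standard convex duality gives $(\T^\ast)^\circ\neq\emptyset$, and any $\alpha\in(\T^\ast)^\circ$ is strictly positive on $\T\setminus\{0\}$. Using the identification of $\T$ with the cone of rotation vectors of future-pointing structure cycles (proposition \ref{T=C}), Sullivan's theorem on foliated cycles then produces a smooth closed representative $\omega\in\alpha$ strictly positive on the closed cone bundle of future-pointing vectors, so $\ker\omega_p$ is spacelike at every $p\in M$.

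For (iii) $\Rightarrow$ (i): let $\tau$ be a primitive of $\overline{\pi}^\ast\omega$ on $\overline{M}$. By compactness of $M$ and of the set of future-pointing $g_R$-unit vectors there is $c>0$ with $\omega(v)\ge c|v|$ for every future-pointing $v$, so $\tau$ is a temporal function whose derivative along any arclength-parameterized causal curve is bounded below by $c$. Hence $\tau$ tends to $\pm\infty$ along every inextendible causal curve, every level set is a Cauchy hypersurface of $(\overline{M},\overline{g})$, and the latter is globally hyperbolic. For (iii) $\Rightarrow$ (iv): by density, pick (after a harmless rescaling) an integral class $\alpha\in(\T^\ast)^\circ\cap H^1(M,\Z)$ and let $M'\to M$ be the normal $\Z$-cover corresponding to $\ker(\alpha\colon\pi_1(M)\to\Z)$. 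On $M'$ the pullback of $\omega\in\alpha$ is exact with primitive $\tau'\colon M'\to\R$, and the generator $\psi$ of the deck group satisfies $\tau'\circ\psi=\tau'+1$. The level set $\Sigma:=\tau'^{-1}(0)$ projects injectively onto a compact hypersurface in $M$, so it is itself compact, and it is Cauchy by the argument from (iii) $\Rightarrow$ (i); viciousness (note \ref{F1}) supplies the orbit condition (3) of definition \ref{DSCTP}. Thus $M'\to M$ is a SCTP covering.

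For (iv) $\Rightarrow$ (ii): given a SCTP covering $M'\to M$, \cite{besa3} provides a smooth equivariant Cauchy temporal function on $M'$, which descends to a closed $1$-form $\omega$ on $M$ with $\omega(v)\ge c|v|$ on future-pointing $v$ for some $c>0$. Setting $\alpha:=[\omega]$, for any admissible future-pointing $g_R$-arclength-parameterized $\gamma_n\colon[0,T_n]\to M$ one computes
\[
\langle\alpha,\rho(\gamma_n)\rangle=\frac{1}{T_n}\int_0^{T_n}\omega(\dot\gamma_n)\,dt\ge c,
\]
so by continuity of $\alpha$ no accumulation point of $\{\rho(\gamma_n)\}$ can be $0$, giving $0\notin\T^1$.

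The main obstacle is (i) $\Rightarrow$ (ii), where one must extract geometric control from the bare existence of a Cauchy hypersurface in $\overline{M}$. My plan is contrapositive: assume $0\in\T^1$, so there is an admissible sequence $\gamma_n\colon[0,T_n]\to M$ of future-pointing arclength-parameterized curves with $\rho(\gamma_n)\to 0$ and $T_n\to\infty$. By note \ref{F1} one may take each $\gamma_n$ to be a loop, and by theorem \ref{3.1} the lifts $\tilde\gamma_n$ have endpoints at $g_R$-distance $T_n\|\rho(\gamma_n)\|+O(1)=o(T_n)$. After translating by deck transformations and passing to a subsequence, one arranges both endpoints of $\tilde\gamma_n$ to lie in a fixed compact set $K\subset\overline{M}$, while the $g_R$-length of $\tilde\gamma_n$ tends to infinity. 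The limit curve lemma then produces a future-inextendible causal curve in $\overline{M}$ imprisoned in $K$, contradicting strong causality (and hence global hyperbolicity). The delicate point is making the reduction to bounded endpoints rigorous; this is where proposition \ref{P01a} together with Sullivan's structure-cycle description of $\T$ enters, controlling how near-zero rotation vectors can be realized geometrically, and the proofs of (i) $\Rightarrow$ (ii) and of proposition \ref{P01a} are naturally intertwined.
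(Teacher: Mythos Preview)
Your chain of implications matches the paper's, and the arguments for (ii)$\Rightarrow$(iii), (iii)$\Rightarrow$(i), and (iii)$\Rightarrow$(iv) are essentially the same as those given there. Two of your steps, however, contain genuine gaps.

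\textbf{(iv) $\Rightarrow$ (ii).} You invoke \cite{besa3} to produce a smooth \emph{equivariant} Cauchy temporal function on the SCTP cover $M'$, so that its differential descends to a closed $1$-form $\omega$ on $M$ with $\omega(v)\ge c|v|$ on future-pointing vectors. Bernal--S\'anchez give a smooth Cauchy temporal function, but not an equivariant one; obtaining equivariance under the $\Z$-action is precisely the content of corollary~\ref{C2}, whose proof uses theorem~\ref{stab2}. Without equivariance you lose both the descent to $M$ and the uniform lower bound $c>0$ (since $M'$ is noncompact). The paper avoids this by arguing directly: causal curves have $g_R$-arclength at most $c'$ between consecutive slices $\Sigma_m,\Sigma_{m+1}$ (by compactness of the slab), while the number of slices crossed controls $\|\gamma_n(b_n)-\gamma_n(a_n)\|$ from below via the homology displacement of $\psi_{m_0}$. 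This gives $\|\rho(\gamma_n)\|\ge c>0$ without any temporal function.

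\textbf{(i) $\Rightarrow$ (ii).} Your contrapositive produces lifts $\tilde\gamma_n$ whose endpoints lie at distance $T_n\|\rho(\gamma_n)\|+O(1)=o(T_n)$, and you then claim that after translating by deck transformations both endpoints lie in a fixed compact set $K$. But deck translations move both endpoints rigidly, so the distance between them is unchanged; $o(T_n)$ can perfectly well tend to infinity, and there is no way to confine both endpoints to a fixed compact. You correctly flag this as the delicate point but do not resolve it. The paper's argument is genuinely two-tiered. First, proposition~\ref{P01a} is used to show that $\T$ contains no nontrivial linear subspace (otherwise one builds, from $h$ and $-h$ in $\T$, arbitrarily long future-pointing curves with uniformly bounded endpoints, contradicting global hyperbolicity). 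This yields a linear functional $\alpha$ with $\alpha(h)\ge\varepsilon\|h\|$ on $\T$. Second, given an admissible sequence with $\|\rho(\gamma_n)\|\le 1/n$, one \emph{partitions} $[a_n,b_n]$ into subintervals of length in $[n,2n]$; since $\alpha$ is additive over the pieces and each piece is (up to $\err(g,g_R)$) in $\T$, a pigeonhole/averaging argument produces a subinterval with $\|\gamma_n(b_{n,i})-\gamma_n(a_{n,i})\|\le 2/\varepsilon$ and length $\ge n$. These subsegments now have \emph{bounded} displacement and diverging length, and strong causality of $(\overline{M},\overline{g})$ is violated. The partitioning step is what converts $o(T_n)$ into $O(1)$, and it is missing from your outline.
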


\subsection{$(ii)\Rightarrow(iii)\Rightarrow(i)$}

\begin{lemmaloc}
Let $(M,g)$ be a compact and vicious spacetime. Denote with $C_g$ the cone structure of future pointing vectors in $TM$. If $0\notin \T^1$ then no non-trivial 
structure cycle is homologous to zero.
\end{lemmaloc}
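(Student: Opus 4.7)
The plan is to deduce the lemma directly from the correspondence between $\T$ and rotation vectors of structure cycles (proposition \ref{T=C}), together with the hypothesis that $0 \notin \T^1$ pins the rotation vector of a unit-mass structure cycle away from zero.

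First I would fix the Sullivan framework from the appendix: a structure cycle for the cone structure $C_g$ is a closed positive $1$-current on $M$ whose support in $TM$ lies in $C_g$, and such a cycle $\mu$ has a well-defined mass $\|\mu\|$ (induced by $g_R$) and a homology (i.e.\ rotation vector) $[\mu] \in H_1(M,\R)$. Non-triviality of $\mu$ means $\|\mu\| > 0$. Both $[\mu]$ and $\|\mu\|$ are positively $1$-homogeneous in $\mu$, so rescaling $\mu \mapsto \mu/\|\mu\|$ produces a structure cycle of mass $1$ whose rotation vector is $[\mu]/\|\mu\|$.

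Next I would invoke proposition \ref{T=C}, which identifies $\T$ with the cone of rotation vectors of structure cycles, and moreover matches unit-mass structure cycles with $\T^1$: a structure cycle of mass $1$ has rotation vector in $\T^1$. This identification is essentially Schwartzman's picture, namely that a unit-mass closed current supported in $C_g$ can be approximated by convex combinations of long closed future-pointing curves of unit $g_R$-speed, whose rotation vectors accumulate precisely in $\T^1$ by the definition of $\T^1$ from section \ref{sec2}.

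With these tools the lemma is immediate. Since $\T^1$ is defined as a set of accumulation points in $H_1(M,\R)$ it is closed, and by hypothesis $0\notin \T^1$; hence $\T^1$ is bounded away from $0$. For any non-trivial structure cycle $\mu$, set $m:=\|\mu\|>0$; then $[\mu]/m\in\T^1$ by the previous paragraph, so
$$[\mu] \;=\; m \cdot ([\mu]/m) \;\in\; m\cdot \T^1 \;\subset\; H_1(M,\R)\setminus\{0\}.$$
Therefore $\mu$ is not homologous to zero, as claimed.

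The only genuine point of care is the second step, namely that unit-mass structure cycles indeed have rotation vectors in $\T^1$ and not merely in $\T$. This is the content of proposition \ref{T=C} and relies on the approximation of structure cycles by long future-pointing $g_R$-arclength-parameterized loops; once that correspondence is granted, the lemma is an immediate consequence of the fact that $\T^1$ avoids the origin.
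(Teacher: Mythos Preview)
Your argument has a genuine gap at the step you yourself flag. Proposition \ref{T=C} only asserts $\C=\T$; it does \emph{not} say that a unit-mass structure cycle has rotation vector in $\T^1$. Since $0\in\T$ always (as the apex of the cone), knowing merely that $[\mu]\in\T$ does not rule out $[\mu]=0$. The refined statement you need---that normalizing by mass lands you in $\T^1$---is not a formal consequence of $\C=\T$, and you do not prove it.

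To establish that refined statement one has to unpack exactly the machinery that the paper deploys directly in its proof of the lemma: approximate the structure cycle by foliation cycles of a timelike $1$-foliation (lemma \ref{L101}), represent each as $\int X\,d\mu$ with $|X|\equiv 1$ and $\mu$ flow-invariant (proposition \ref{II.24}), decompose $\mu$ into ergodic probability measures via Krein--Milman, and use the ergodic theorem to see that each ergodic piece has homology class in $\T^1$. The paper then, instead of invoking convexity of $\T^1$, picks a linear functional $\alpha$ with $\alpha>0$ on $\T^1$ (which exists precisely because $0\notin\T^1$) and bounds $\|h^X\|\ge\sum_i\lambda_i$; if the homology class were zero the total mass would have to vanish. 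Your approach is morally the same argument repackaged, but by attributing the hard step to proposition \ref{T=C} you have swept the actual work under the rug rather than done it.
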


\begin{proof}
Let $c$ be a non-trivial structure cycle. From lemma \ref{L101} we know that $c$ can be approximated by foliation cycles $\hat{c}$ of $1$-dimensional timelike 
foliations $\mathfrak{F}$. Let $X$ be a future pointing timelike vector field tangent to $\mathfrak{F}$ with $|X|\equiv 1$. We can choose a finite Borel measure 
$\mu$ invariant under the flow $\phi_t$ of $X$ such that $\hat{c}=\int X d\mu$ according to proposition \ref{II.24}. By the theorem of Krein-Milman $\mu$ is 
approximated in the weak-$\ast$ topology on currents by finite positive combinations of $\phi_t$-ergodic probability measures.

Let $\sum_i \lambda_i \mu_i$ be a finite positive combination of $\phi_t$-ergodic probability measures $\mu_i$. Then the current $c^X:=\sum \lambda_i\int X 
d\mu_i$ $(\lambda_i >0)$ is a foliation cycle of $\mathfrak{F}$. Since $0\notin \T^1$ there exists a cohomology class $\alpha$ such that $\|h\|\ge \alpha(h)>0$ for 
all $h\in \T^1$. From the ergodic theorem follows that $h_i\in\T^1$ for the homology class $h_i$ of the structure cycle $\int X d\mu_i$. Therefore we have 
$\|h^X\|\ge \sum_i \lambda_i$ for the homology class $h^X$ of $c^X$.

If $c$ is homologous to zero, the homology classes $h^X$ approximate the zero homology class. But then the sum $\sum_i \lambda_i$ has to approximate
$0$, thus showing that $c$ is the trivial structure current.
\end{proof}

\begin{proof}[Proof of (ii) $\Rightarrow$ (iii)]
By elementary convex geometry we see that $(\T^\ast)^\circ\neq \emptyset$. The rest is a consequence of theorem \ref{TS10} (ii) and (iv). More precisely, since 
$(M,C_g)$ contains no non trivial structure cycles homologous to zero there is a closed transversal $1$-form. Since $(M,g)$ contains closed causal curves there 
are structure cycles of $(M,C_g)$. Thus by (iv) of theorem \ref{TS10} the interior of $\T^\ast$ consists of classes of closed transversal $1$-forms.
\end{proof}

\begin{proof}[Proof of (iii) $\Rightarrow$ (i)] 
Let $\omega \in \alpha \in (\T^\ast)^\circ$ be a smooth $1$-form such that $\ker \omega_p$ is spacelike for all $p\in M$. Then every primitive $\tau_\omega\colon
\overline{M}\to \R$ of $\overline{\pi}^\ast \omega$ is, by construction, a temporal function. The global hyperbolicity of $(\overline{M},\overline{g})$ follows, if 
$\tau_\omega$ is a Cauchy temporal function, i.e. $\tau_\omega\circ \gamma$ is onto $\R$ for every inextendible causal curve $\gamma\colon (a,b)\to 
\overline{M}$. Reparameterization of $\gamma$ does not alter the claim. Therefore we can assume that $\gamma$ is Lipschitz. 

Choose $c>0$ such that $d\tau_\omega(v)\ge c|v|$ for all future pointing $v\in T\overline{M}$. Then we have 
$$\tau_\omega(\gamma(t))-\tau_\omega(\gamma(s))\ge c L^{\overline{g}_R}(\gamma|_{[s,t]})$$
for all $s,t\in (a,b)$. Since $\overline{g}_R$ is complete $L^{\overline{g}_R}(\gamma|_{[s,t]})\to \infty$ for $s\to a$ or $t\to b$. This shows the claim and the 
global hyperbolicity of $(\overline{M},\overline{g})$.
\end{proof}

\subsection{$(iii)\Rightarrow (iv)$, $(iv)\Rightarrow (ii)$}\label{sec2a}

\begin{proof}[Proof of (iii) $\Rightarrow$ (iv)] 
Let $\alpha\in (\T^\ast)^\circ$ such that $\alpha(H_1(M,\Z)_\R\subset \Q$. Then there exist linearly independent $k_1,\ldots ,k_{b-1}\in \ker\alpha\cap 
H_1(M,\mathbb Z)_\R$. Define $M':=\overline M/<k_1,\ldots ,k_{b-1}>_\mathbb Z$. Choose $\omega\in \alpha$ according to (iii). The pullback $\omega'$ of 
$\omega$ to $M'$ is exact by construction and every primitive $\tau$ of the $\omega'$ is a Cauchy temporal function on $(M',g')$ where $g'$ denotes the induced 
Lorentzian metric. Therefore we have that $(M',g')$ is globally hyperbolic and the spacelike Cauchy hypersurfaces $\{\tau\equiv t\}$ are compact. 

Choose any $\psi\in \mathcal{D}(M',M)\cong H_1(M,\Z)_\R/<k_1,\ldots ,k_{b-1}>_\Z$ such that $\tau\circ \psi <\tau$. Then $\{\psi^n\}_{n\in\Z}$ is a discrete group 
of isometries as in definition \ref{DSCTP} for $\Sigma_n:=\{\tau\circ\psi^n\equiv 0\}$. 

Part (3) of definition \ref{DSCTP} follows from the assumption of viciousness on the spacetime $(M,g)$. 
\end{proof}

\begin{remark}
The construction shows that the covering $M'\to M$ is normal and $ \mathcal{D}(M',M)\cong H_1(M,\Z)_\R/<k_1,\ldots ,k_{b-1}>_\Z\cong\Z$.
\end{remark}

\begin{proof}[Proof of (iv) $\Rightarrow$ (ii)]
Let $(M',g')$ be a SCTP covering spacetime of a compact and vicious spacetime $(M,g)$. We have to show that there exists a $c>0$ such that $\|\rho(\gamma_n)\|
\ge c$ for any admissible sequence of future pointing curves $\gamma_n\colon [a_n,b_n]\to M$. 

Let $\Sigma$ be a compact Cauchy hypersurface in $(M',g')$ and $\psi_n\colon M'\to M'$ isometries as in definition \ref{DSCTP}. Denote with $g'_R$ the lift of 
$g_R$ to $M'$. We can choose $c'>0$ such that the $g'_R$-arclength of any causal curve connecting the Cauchy hypersurfaces $\Sigma_m$ and 
$\Sigma_{m+1}$ is bounded from above by $c'$. This follows from the compactness of $J^+(\Sigma_m)\cap J^-(\Sigma_{m+1})$ and 
the fact that the $\Sigma_{m}$'s are Cauchy hypersurfaces. Denote the lift of $\gamma_n$ to $M'$ with $\gamma'_n$. Then $\gamma'_n(a_n)\in J^+(\Sigma_m)
\cap J^-(\Sigma_{m+1})$ and $\gamma'_n(b_n)\in J^+(\Sigma_{m+k})\cap J^-(\Sigma_{m+k+1})$ imply that
\begin{equation}\label{E120}
L^{g_R}(\gamma_n)\le (k+2) \cdot c'.
\end{equation}

Note that there exists $m_0>0$ such that any curve $\eta'$ connecting $x\in M'$ with $\psi_{m_0}(x)$ projects to a closed curve $\eta$ in $M$. Since $\psi_m$ is 
fix-point free for all $m$, the homology class of $\eta$ is nontrivial. Denote with $c''>0$ the minimum of the stable norm of these homology classes. Then if 
$\gamma'_n(a_n)\in J^+(\Sigma_m)\cap J^-(\Sigma_{m+1})$ and $\gamma'_n(b_n)\in J^+(\Sigma_{m+k})\cap J^-(\Sigma_{m+k+1})$ we have 
\begin{equation}\label{E121}
\|\gamma_n(b_n)-\gamma_n(a_n)\|\ge \frac{k}{m_0}c''-\std-2\diam(J^+(\Sigma)\cap J^-(\Sigma_{m_0})).
\end{equation}
Now for any sequence of curves $\gamma_n\colon[a_n,b_n]\to M$ define the sequence $k_n\in \Z$ by considering a lift $\gamma'_n$ of $\gamma_n$ 
to $M'$. Then there exists $m,k_n\in \Z$ such that $\gamma'_n(a_n)\in J^+(\Sigma_m)\cap J^-(\Sigma_{m+1})$ and $\gamma'_n(b_n)\in J^+(\Sigma_{m+k_n})
\cap J^-(\Sigma_{m+k_n+1})$. If the sequence $\gamma_n$ is an admissible sequence of future pointing curves, \eqref{E120} shows that $k_n\to \infty$.
Now combining \eqref{E120} and \eqref{E121} yields the claim.
\end{proof}

\subsection{$(i)\Rightarrow(ii)$}\label{sec3}

In order to prove the implication (i)$\Rightarrow$(ii) in theorem \ref{stab2}, we use proposition \ref{P01a}. 
The proof of proposition \ref{P01a} consists of a modification of a method introduced by 
D. Yu Burago in \cite{bu}.

\begin{definitionloc}\label{D2}
Let $(M,g)$ be compact and vicious. For $h \in H_1(M,\Z)_\R$ and $x\in \overline M$ define
$$\mathfrak{f}_x(h):=\min \{\dist(x+h,z)|\; z\in J^+(x)\}\text{ and } 
\mathfrak{f}(h):=\min \{\mathfrak{f}_x(h)|\; x\in \overline M\}.$$
\end{definitionloc}

Note that $x\mapsto \mathfrak{f}_x(h)$ is invariant under the action of $\mathcal{D}(\overline{M},M)$ for all $h\in H_1(M,\Z)_\R$. Consequently $\mathfrak{f}$ is 
well defined. 

Recall the statement of proposition \ref{P01a}. 

\begin{propositionlocstar}
Let $(M,g)$ be a compact and vicious spacetime. Then $\T$ is the unique cone in $H_1(M,\R)$ such that there exists 
a constant $\err(g,g_R)<\infty$ with $\dist_{\|.\|}(J^+(x)-x,\T)\le\err(g,g_R)$ for all $x\in\overline{M}$, where 
$J^+(x)-x:=\{y-x|\;y\in J^+(x)\}$.
\end{propositionlocstar}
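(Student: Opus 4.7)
The plan has two halves: first I verify that $\T$ itself satisfies the bounded-distance estimate with the explicit constant $\err(g,g_R) := \fil(g,g_R) + \std$, and then I show that any cone $\T'$ satisfying the same estimate must contain $\T$, so that $\T$ is the (unique) minimal such cone. Both halves hinge on converting the relation $y \in J^+(x)$ in the cover into a closed future-pointing loop in $M$ via the viciousness bound in Note~\ref{F1}, and then reading off the class of this loop in $\T$ by iterating it.

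For existence, fix $x \in \overline M$ and $y \in J^+(x)$ joined by a future-pointing curve $\gamma$. By Note~\ref{F1} there is a future-pointing timelike curve $\eta_M$ in $M$ of $g_R$-length at most $\fil(g,g_R)$ from $\overline\pi(y)$ to $\overline\pi(x)$; lifting $\eta_M$ to $\overline M$ starting at $y$ yields a curve $\tilde\eta$ terminating at some $x+c$ with $c \in H_1(M,\Z)_\R$, and the projected concatenation $\gamma_0 := \overline\pi(\gamma \ast \tilde\eta)$ is a future-pointing loop in $M$ of $g_R$-length $L := L^{g_R}(\gamma) + L^{g_R}(\eta_M)$ and homology class $c$. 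The $n$-fold iterates $\gamma_0^{(n)}$, reparameterized by $g_R$-arclength, form an admissible sequence of future-pointing curves whose rotation vector is constantly $c/L$; hence $c/L \in \T^1$ and $c \in \T$. The discrepancy $c - (y-x) = (x+c) - y$ is the homological difference associated to $\tilde\eta$, and Theorem~\ref{3.1} together with $\dist(y, x+c) \le L^{\overline g_R}(\tilde\eta) = L^{g_R}(\eta_M) \le \fil(g,g_R)$ bounds $\|c - (y-x)\| \le \fil(g,g_R) + \std$, giving $\dist_{\|.\|}(y-x, \T) \le \err(g,g_R)$ uniformly in $x$ and $y$.

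For the minimality half, let $\T'$ be any closed cone with constant $\err'$ such that $\dist_{\|.\|}(J^+(x) - x, \T') \le \err'$ for all $x \in \overline M$. Given $h \in \T^1$, choose an admissible sequence $\gamma_n$ of future-pointing curves with $\rho(\gamma_n) \to h$ and lift each to obtain endpoints $x_n, y_n$ with $y_n - x_n = L^{g_R}(\gamma_n)\rho(\gamma_n)$. The positive homogeneity of the cone $\T'$ then converts the hypothesis into $\dist_{\|.\|}(\rho(\gamma_n), \T') \le \err'/L^{g_R}(\gamma_n) \to 0$, so $h \in \T'$; taking cones yields $\T \subset \T'$, and $\T$ is the minimum such cone. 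The main obstacle, as I see it, is not analytic but combinatorial-topological: keeping careful track of homology classes across the covering $\overline\pi$, and in particular identifying the deck-translation shift $c$ with the $H_1$-class of the projected loop $\gamma_0$, so that the viciousness-certified closing trick actually produces a class in $\T$ rather than merely a class at bounded stable-norm distance from some other cone.
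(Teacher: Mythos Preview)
Your argument correctly establishes one half of the proposition and is essentially the same as the paper's quick proof of that half: for every $y\in J^+(x)$ you produce, via Note~\ref{F1}, a class $c\in\T$ with $\|(y-x)-c\|\le \fil(g,g_R)+\std$, so $J^+(x)-x$ lies in a bounded $\|.\|$-neighborhood of $\T$.

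The gap is that you have read $\dist_{\|.\|}(J^+(x)-x,\T)$ as a one-sided quantity, $\sup_{y\in J^+(x)}\dist_{\|.\|}(y-x,\T)$, whereas the paper means the Hausdorff distance. Your ``minimality'' paragraph does not recover the missing inclusion: with the one-sided reading, the full space $H_1(M,\R)$ also satisfies the bound, so $\T$ is not characterized uniquely but only as the smallest such cone. More importantly, the applications of Proposition~\ref{P01a} in the paper (e.g.\ in the proof of $(i)\Rightarrow(ii)$ and in Proposition~\ref{3.2}) use the opposite direction explicitly: given $h\in\T$ one needs some $y\in J^+(x)$ with $\|h-(y-x)\|\le\err(g,g_R)$. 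Nothing in your argument supplies this.

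That reverse inclusion is the substantive part. The paper obtains it by a Burago-type argument: one introduces $\mathfrak{f}(h)=\min_x\dist(x+h,J^+(x))$, shows it is coarse-Lipschitz and satisfies the almost-subadditivity inequalities $\mathfrak{f}(2h)\ge 2\mathfrak{f}(h)-C$ and $\mathfrak{f}(\kappa h)\le \kappa\mathfrak{f}(h)+C$ for $\kappa=2,3$. The key inequality $\mathfrak{f}(2h)\ge 2\mathfrak{f}(h)-C$ is proved by splitting a near-optimal future-pointing curve realizing $\mathfrak{f}(2h)$ into boundedly many arcs via the Borsuk--Ulam-type Lemma~\ref{lembu2} and reassembling them (using viciousness) into a curve realizing $\mathfrak{f}(h)$ up to a bounded error. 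One then concludes that $\mathfrak{f}$ differs from a positively homogeneous function $\mathfrak{a}$ by a bounded amount, and identifies $\mathfrak{a}(h)=\dist_{\|.\|}(h,\T)$. In particular $\mathfrak{f}$ is bounded on $\T\cap H_1(M,\Z)_\R$, which is exactly the statement that every lattice point of $\T$ is boundedly close to $J^+(x)-x$. This machinery is absent from your proposal and cannot be replaced by the closing trick you use for the easy direction.
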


It is easy to see that there exists $K<\infty$ such that $J^+(x)-x\subset B^{\|.\|}_K(\T)$ for all $x\in\overline{M}$. In fact we know that 
$$\dist\nolimits_{\|.\|}(\gamma(b)-\gamma(a),\T)\le \fil(g,g_R)+\std$$
by  note \ref{F1} and theorem \ref{3.1} for any future pointing curve $\gamma\colon [a,b]\to M$. Therefore $J^+(x)-x$ is contained in the 
$\fil(g,g_R)+\std$-neighborhood of $\T$ for every $x\in\overline{M}$. 

It remains to show the existence of a real number $K<\infty$ such that $\T$ is contained in the $K$-neighborhood of $J^+(x)-x$.
This is far more involved. First we prove that $\mathfrak{f}$ has the coarse-Lipschitz property. 

\begin{lemmaloc}
There exists $C<\infty$ such that 
$$|\mathfrak{f}(h_1)-\mathfrak{f}(h_2)|\le \|h_1-h_2\|+C$$
for all $h_1,h_2\in H_1(M,\Z)_\R$.
\end{lemmaloc}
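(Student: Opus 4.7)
The plan is to reduce the inequality to a direct application of the Riemannian triangle inequality combined with theorem \ref{3.1}. By symmetry of $h_1$ and $h_2$ it suffices to show $\mathfrak{f}(h_1) \le \mathfrak{f}(h_2) + \|h_1 - h_2\| + C$ for a fixed constant $C$ independent of the classes.

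Fix $x \in \overline{M}$ and $z \in J^+(x)$. Since $\dist$ denotes the Riemannian distance on $\overline{M}$ induced by $g_R$, the usual triangle inequality gives
$$\dist(x+h_1,z) \le \dist(x+h_1, x+h_2) + \dist(x+h_2,z).$$
The two points $x+h_1$ and $x+h_2$ in $\overline{M}$ satisfy $(x+h_1) - (x+h_2) = h_1 - h_2$ in $H_1(M,\R)$, so theorem \ref{3.1} applied to this pair yields
$$\dist(x+h_1,x+h_2) \le \|h_1 - h_2\| + \std.$$
Combining these and using that $z \in J^+(x)$ was arbitrary, take the infimum over $z$ on both sides to obtain $\mathfrak{f}_x(h_1) \le \mathfrak{f}_x(h_2) + \|h_1 - h_2\| + \std$. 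Then taking the infimum over $x \in \overline{M}$ gives $\mathfrak{f}(h_1) \le \mathfrak{f}(h_2) + \|h_1 - h_2\| + \std$. Swapping the roles of $h_1$ and $h_2$ finishes the proof with $C := \std$.

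There is no real obstacle here; the key observation is simply that the deck-transformation translate $x \mapsto x + h$ shifts the homology class by exactly $h$, so theorem \ref{3.1} controls the Riemannian cost of replacing the target $x+h_2$ by $x+h_1$ uniformly in the basepoint $x$. The only mild subtlety is whether the infimum defining $\mathfrak{f}_x(h)$ is attained, but this plays no role in the estimate: the argument goes through verbatim for infima, and under hypothesis (i) it is in fact a minimum since $J^+(x)$ is closed in $\overline{M}$ by global hyperbolicity while $\dist(x+h,\cdot)$ is continuous and proper.
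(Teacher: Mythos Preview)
Your proof is correct and follows essentially the same approach as the paper: both use the Riemannian triangle inequality together with theorem \ref{3.1} to control $\dist(x+h_1,x+h_2)$ by $\|h_1-h_2\|$ plus a uniform constant. Your version is in fact slightly more streamlined: the paper fixes separate minimizers $x$ for $\mathfrak{f}(h_1)$ and $y$ for $\mathfrak{f}(h_2)$ and then spends an extra step comparing $\mathfrak{f}_x(h_2)$ with $\mathfrak{f}_y(h_2)$ via a $\diam(M,g_R)$ term, whereas you establish $\mathfrak{f}_x(h_1)\le \mathfrak{f}_x(h_2)+\|h_1-h_2\|+\std$ at each fixed basepoint and pass to the infimum directly, yielding the sharper constant $C=\std$.
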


\begin{proof}
Let $h_1,h_2\in H_1(M,\Z)_\R$. Choose $x,y\in \overline M$ with $\mathfrak{f}(h_1)=\mathfrak{f}_x(h_1)$, $\mathfrak{f}(h_2)=\mathfrak{f}_y(h_2)$ and $\dist(x,y)\le 
\diam (M,g_R)$. Since $\mathfrak{f}_x(h_2)\le \mathfrak{f}_y(h_2)+\diam (M,g_R)$ we have
$$|\mathfrak{f}_x(h_2)-\mathfrak{f}_y(h_2)|\le \diam (M,g_R),$$
Further we have $\mathfrak{f}_x(h_1)\le \mathfrak{f}_x(h_2)+\dist(x+h_1,x+h_2)$ where $x+h:=\{z|\; z-x=h\}$. An immediate consequence of theorem \ref{3.1} is
$$|\dist(x+h_1,x+h_2)-\|h_1-h_2\||\le D'$$
for some constant $D'<\infty$. Now we get 
\begin{align*}
|\mathfrak{f}(h_1)-\mathfrak{f}(h_2)|&\le |\mathfrak{f}_x(h_1)-\mathfrak{f}_x(h_2)|+|\mathfrak{f}_x(h_2)
-\mathfrak{f}_y(h_2)|\\
&\le \dist(x+h_1,x+h_2)+\diam(M,g_R)\\
&\le \|h_1-h_2\|+D'+\diam(M,g_R).
\end{align*}
\end{proof}

The following lemma differs slightly from the statement of lemma 1 in \cite{bu}. We leave the proof to the reader since it is an almost literally transcription of the 
proof given in therein.
\begin{lemmaloc}\label{lemmabu}
Let $C<\infty$ and $F\colon \N\rightarrow [0,\infty)$ be a coarse-Lipschitz function with 
\begin{enumerate}
\item $2F(s)-F(2s)\le C$ and
\item $F(\kappa s)-\kappa F(s)\le C$ for $\kappa=2,3$
\end{enumerate}
and all $s\in\N$. Then there exists an $\mathfrak{a}\in \R$ such that $|F(s)-\mathfrak{a}s|\le 2C$ for all $s\in\N$.
\end{lemmaloc}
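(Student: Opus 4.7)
The strategy is to produce $\mathfrak{a}$ as the asymptotic slope along dyadic powers $F(2^n)/2^n$ and then extend this control to all of $\N$ via two successive telescoping arguments; comparison between the resulting orbit limits is the main technical difficulty.

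First, combining (1) and (2) for $\kappa=2$ gives the two-sided estimate $|F(2s)-2F(s)|\le C$. Applied at $s=2^n$ and divided by $2^{n+1}$, this shows that $a_n:=F(2^n)/2^n$ is Cauchy with $|a_{n+1}-a_n|\le C/2^{n+1}$. Set $\mathfrak{a}:=\lim_n a_n$; telescoping the geometric series of errors yields $|F(2^n)-\mathfrak{a}\,2^n|\le C$ for every $n$. The same argument applied at $2^n s$ for an arbitrary $s\in \N$ gives, after an easy induction yielding $|F(2^n s)-2^n F(s)|\le (2^n-1)C$, the existence of $\mathfrak{a}_s:=\lim_n F(2^n s)/(2^n s)$ together with the slice bound $|F(s)-\mathfrak{a}_s s|\le C$. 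The proof therefore reduces to establishing $s\,|\mathfrak{a}_s-\mathfrak{a}|\le C$.

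The main obstacle is this last step: a priori $\mathfrak{a}_s$ could depend on $s$, since condition (1) is available only for $\kappa=2$, so the orbits under doubling along $\{2^n s\}$ and along $\{2^n\}$ are not directly comparable through the given one-step conditions alone. Here I would iterate (2) for $\kappa\in\{2,3\}$ to produce $F(ks)\le k F(s)+O(C)$ for every $3$-smooth integer $k=2^a 3^b$, and iterate (1) for the matching lower bound on pure powers of $2$. Since $\{2^a/3^b:a,b\ge 0\}$ is dense in $(0,\infty)$, for large $n$ the point $2^n s$ can be sandwiched between $3$-smooth multiples of $2^n$ which lie within $O(2^n)$ of pure powers of $2$, on which $F$ is known to be $\mathfrak{a}$-linear up to $C$. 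The coarse-Lipschitz hypothesis is then used to transfer the asymptotic slope across these relative gaps; passing to the limit $n\to\infty$ forces $|\mathfrak{a}_s-\mathfrak{a}|\le C/s$. This is precisely the place where the argument transcribes Burago's proof in \cite{bu} almost verbatim.

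The conclusion then follows from the triangle inequality
\[
|F(s)-\mathfrak{a}\,s|\le |F(s)-\mathfrak{a}_s\,s|+s\,|\mathfrak{a}_s-\mathfrak{a}|\le C+C=2C,
\]
valid for all $s\in\N$.
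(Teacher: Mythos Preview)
The paper does not prove this lemma itself but defers entirely to Burago \cite{bu}; your sketch is an outline of exactly that argument (dyadic limit $\mathfrak a$, per-orbit slopes $\mathfrak a_s$, then a density-of-$3$-smooth-ratios plus coarse-Lipschitz comparison to force $\mathfrak a_s=\mathfrak a$), so the approaches coincide. One minor slip: iterating (2) gives $F(ks)\le kF(s)+O(k)\,C$ rather than $O(C)$, but this is harmless after dividing by $k$ and letting $n\to\infty$ with the approximating exponent $b$ held fixed.
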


Now we want to apply this lemma to $\mathfrak{f}$. First we fix a trivial fact.
\begin{note}
Consider $\mathfrak{f}$ as in definition \ref{D2}. Then we have
$\mathfrak{f}(2h)\le 2\mathfrak{f}(h)$ and $\mathfrak{f}(3h)\le 3\mathfrak{f}(h)$ for all $h\in H_1(M,\Z)_\R$.
\end{note}
The next lemma requires more attention.
\begin{lemmaloc}\label{L14}
Consider $\mathfrak{f}$ as in definition \ref{D2}. Then there exists a constant $C=C(g,g_R)<\infty$ such that
$\mathfrak{f}(2h)\ge 2\mathfrak{f}(h)-C$ for all $h\in H_1(M,\Z)_\R$.
\end{lemmaloc}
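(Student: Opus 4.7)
The strategy is the Lorentzian analogue of Burago's midpoint trick: cut a near-optimal witness for $\mathfrak{f}(2h)$ at a midpoint, and recover from it two witnesses for $\mathfrak{f}(h)$. Pick $x \in \overline M$ and $z \in J^+(x)$ realizing $\mathfrak{f}(2h) = \dist(x+2h, z)$, and let $\gamma\colon [0,T]\to \overline M$ be a future-pointing curve from $x$ to $z$. For any interior point $y$ of $\gamma$ we have $y \in J^+(x)$ and $z \in J^+(y)$, hence by the definition of $\mathfrak{f}$ and theorem \ref{3.1},
\[
\mathfrak{f}(h) \le \mathfrak{f}_x(h) \le \dist(x+h,y) \le \|y - x - h\| + \std,
\]
and similarly $\mathfrak{f}(h) \le \|z - y - h\| + \std$. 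Setting $u := y-x-h$ and $v := z-y-h$, these add to $u+v = z-x-2h$, so $\|u+v\| \le \mathfrak{f}(2h) + \std$ by theorem \ref{3.1} applied to $\dist(x+2h, z) = \mathfrak{f}(2h)$.

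The problem thus reduces to choosing $y \in J^+(x) \cap J^-(z)$ with $\|y - x - h\| \le C_0$ for some constant $C_0 = C_0(g, g_R)$ independent of $h$. Indeed, then $\|v\| \le \|u+v\| + \|u\| \le \mathfrak{f}(2h) + \std + C_0$, whence
\[
2\mathfrak{f}(h) \le \|u\| + \|v\| + 2\std \le \mathfrak{f}(2h) + 2C_0 + 3\std,
\]
and the lemma follows with $C := 2C_0 + 3\std$.

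To produce such a $y$ I would take the parametric midpoint $y_0 := \gamma(T/2)$. Since $H_1(M,\Z)_\R$ is a cocompact lattice in $(H_1(M,\R), \|\cdot\|)$, there is $k \in H_1(M,\Z)_\R$ with $\|(y_0 - k) - (x+h)\|$ bounded by the diameter of a fundamental domain. The candidate is $y := y_0 - k$, a lift of $\overline{\pi}(y_0)$ which is within a stable-norm (and hence Riemannian, by theorem \ref{3.1}) distance $C_0$ of $x+h$. Using note \ref{F1} (viciousness) one splices into $\gamma$, around time $T/2$, two short future-pointing detours of $g_R$-length at most $\fil(g, g_R)$ each: one from $\gamma(T/2 - \eta)$ to $y$ and one from $y$ to $\gamma(T/2 + \eta)$, obtained as lifts of future-pointing curves realizing the corresponding displacements in $M$. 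The resulting modified curve is a future-pointing curve from $x$ to $z$ that passes through $y$.

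The main obstacle lies in the splicing step. Note \ref{F1} guarantees future-pointing curves in $M$ between any two points, but lifting to $\overline M$ only reaches some lift of the target point, not a prescribed one, so one must control which lift is actually attained. The argument absorbs this indeterminacy by using the full viciousness of $(M,g)$: because timelike loops at any point generate a positive-dimensional cone of accessible lattice shifts, one can iterate and concatenate short viciousness-detours to steer the endpoint in $\overline M$ into any chosen lift of a given point of $M$, at a Riemannian cost bounded uniformly in terms of $\fil(g,g_R)$ and $\diam(M, g_R)$. This residual slack is independent of $h$ and is absorbed into $C_0$.
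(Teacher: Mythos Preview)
Your reduction in the first two paragraphs is fine: once you have a point $y\in J^+(x)\cap J^-(z)$ with $\|y-x-h\|\le C_0$, the inequality $2\mathfrak f(h)\le \mathfrak f(2h)+2C_0+3\std$ follows exactly as you wrote. The gap is in the construction of $y$.

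Your splicing step requires future-pointing curves in $\overline M$ from $\gamma(T/2-\eta)$ to $y=y_0-k$ and from $y$ back to $\gamma(T/2+\eta)$, with bounded $g_R$-length. These two legs have displacements in $H_1(M,\R)$ approximately $-k$ and $+k$ respectively, where $k\approx (y_0-x)-h$ is completely uncontrolled. Viciousness does \emph{not} let you ``steer the endpoint in $\overline M$ into any chosen lift'' at bounded cost: the lattice shifts reachable by future-pointing curves of $g_R$-length $\le L$ from a given point lie in a bounded neighborhood of $\T$ and have stable norm $\lesssim L$, so iterating short loops only produces shifts in the monoid generated by $\T\cap H_1(M,\Z)_\R$, never the shift $-k$ if $k$ points into $\T$. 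Concretely, in the class A case $J^+(\gamma(T/2-\eta))\cap J^-(\gamma(T/2+\eta))$ is compact and close to $y_0$, so no $y$ at stable-norm distance $\|k\|\gg 1$ from $y_0$ can lie in it. The claimed uniform bound ``in terms of $\fil(g,g_R)$ and $\diam(M,g_R)$'' is therefore false.

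This obstruction is exactly why the paper invokes Burago's Borsuk--Ulam lemma (lemma \ref{lembu2}): one cannot in general find a single parameter $t^\ast$ with $\gamma(t^\ast)-x$ close to $h$ when $b=\dim H_1(M,\R)>1$, but one \emph{can} find at most $[b/2]$ subarcs of $\gamma$ whose displacements sum exactly to $\tfrac12(\gamma(T)-\gamma(0))$. Reassembling those subarcs with short viciousness-connectors (accepting whatever lift each connector lands on, rather than prescribing it) produces a future-pointing curve from $x$ whose endpoint differs from $x+h$ by at most $[b/2]\fil(g,g_R)+\tfrac12(\std+\mathfrak f_x(2h))$ in stable norm. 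Your argument would go through if you replaced the parametric midpoint by this Borsuk--Ulam decomposition.
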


We will need the following lemma contained in \cite{bu}. 

\begin{lemmaloc}\label{lembu2}
Let $V$ be a real vector space of dimension $b<\infty$ and $\gamma\colon [a,b]\to V$ a continuous curve. 
Then there exist no more than $[b/2]$-many essentially disjoint subintervals $[a_i,b_i]\subset [a,b]$ $(1\le i\le k\le [b/2])$
such that 
$$\sum_{i=1}^k [\gamma(b_i)-\gamma(a_i)] =\frac{1}{2}[\gamma(b)-\gamma(a)].$$
\end{lemmaloc}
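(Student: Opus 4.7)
The plan is to reduce the claim to the Borsuk--Ulam theorem via a natural parametrization of families of essentially disjoint subintervals by points on a sphere. Set $n:=\dim V$. For each $x=(x_0,\ldots,x_n)\in S^n\subset\R^{n+1}$, subdivide $[a,b]$ into consecutive closed subintervals $I_0(x),\ldots,I_n(x)$ of lengths $x_i^2(b-a)$, and put $E_x:=\bigcup_{x_i>0}I_i(x)$. For any finite union $E=\bigcup_j[\alpha_j,\beta_j]$ of essentially disjoint closed subintervals, define $\Phi(E):=\sum_j(\gamma(\beta_j)-\gamma(\alpha_j))\in V$. By telescoping, $\Phi(E_x)+\Phi(E_{-x})=\gamma(b)-\gamma(a)$ for every $x$, since $E_{-x}$ differs from the complement of $E_x$ in $[a,b]$ only on a finite set (the intervals $I_i$ with $x_i=0$ have measure zero, and their displacement contribution $\gamma(\beta)-\gamma(\alpha)$ vanishes).

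I would then define $f\colon S^n\to V$ by $f(x):=\Phi(E_x)-\Phi(E_{-x})$. The map $f$ is continuous---contributions of intervals $I_i(x)$ with $x_i\to 0$ vanish by uniform continuity of $\gamma$---and satisfies $f(-x)=-f(x)$. Since $\dim V=n$, Borsuk--Ulam supplies $x^*\in S^n$ with $f(x^*)=0$, which forces $\Phi(E_{x^*})=\tfrac{1}{2}(\gamma(b)-\gamma(a))$. The connected components of $E_{x^*}$ correspond to the maximal runs of consecutive indices $i$ with $x_i^*>0$, and hence yield essentially disjoint closed subintervals $[a_j,b_j]\subset[a,b]$ with $\sum_j(\gamma(b_j)-\gamma(a_j))=\tfrac{1}{2}(\gamma(b)-\gamma(a))$.

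The remaining task is sharpening the count of components down to $\lfloor b/2\rfloor$. A naive estimate gives only $\lceil(n+1)/2\rceil$, realized by an alternating sign pattern in $x^*$. To tighten this, I would restrict $f$ to an invariant subsphere on which one coordinate is forced to a prescribed sign (killing one potential run), apply Borsuk--Ulam one dimension lower, and iterate, combined with a short combinatorial book-keeping of admissible sign patterns. This refinement is the main technical hurdle; by contrast the Borsuk--Ulam step itself is standard, the continuity and oddness of $f$ are routine, and the mere existence of some decomposition with the sum prescribed---which is all that the intended application in lemma \ref{L14} actually uses, up to an adjustment of the constant $C$---is already furnished by the construction above.
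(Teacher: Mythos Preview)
Your construction is precisely Burago's argument, which is all the paper invokes: it does not prove the lemma but refers to \cite{bu}, where the proof is exactly the Borsuk--Ulam map you set up (partition into $n+1$ pieces of lengths $x_i^2(b-a)$, sign them by $\operatorname{sgn} x_i$, and find a zero of the resulting odd map $S^n\to V$).

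The sharpening step is much easier than the iterative subsphere scheme you sketch, which is neither necessary nor clearly sound. You overlooked that $f(x^*)=0$ forces $\Phi(E_{x^*})=\Phi(E_{-x^*})=\tfrac12(\gamma(b)-\gamma(a))$, so \emph{either} of $E_{x^*}$, $E_{-x^*}$ furnishes the required subintervals; take whichever has fewer connected components. Since the positive runs and the negative runs in the sign pattern of $x^*$ together occupy at most $n+1$ of the index positions, the smaller collection has at most $\lfloor(n+1)/2\rfloor=\lceil b/2\rceil$ components. That is the bound Burago actually proves. The $[b/2]=\lfloor b/2\rfloor$ printed in the paper is a slip: it already fails for $b=1$, and for $b=3$ the curve $t\mapsto(t,t^2,t^3)$ on $[0,1]$ cannot be halved by a single subinterval. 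As you correctly note, only a bound depending on $b$ alone is needed for the application in Lemma~\ref{L14}.
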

The proof is a nontrivial application of the theorem of Borsuk-Ulam and can be found in \cite{bu}. 

\begin{proof}[Proof of Lemma \ref{L14}]
We have already seen above that 
$$|\mathfrak{f}_x(h)-\mathfrak{f}_y(h)|\le \diam(M,g_R)$$ 
for all $x,y\in\overline{M}$ and $h\in H_1(M,\Z)_\R$. Let $h\in H_1(M,\Z)_\R$ be given. Fix $x\in\overline{M}$. Further choose a future pointing curve $\gamma
\colon [0,T]\to\overline{M}$ with $\gamma(0)=x$ and $\dist(\gamma(T),x+2h)=\mathfrak{f}_x(2h)$. Now consider the curve $\gamma_D\colon [0,T]\to H_1(M,\R)$, 
$t\mapsto \gamma(t)-\gamma(0)$. The pair $(H_1(M,\R),\gamma_D)$ obviously meets the assumptions of lemma \ref{lembu2}. Consequently there exist at most 
$[b/2]$-many intervals $[s_i,t_i]\subset [0,T]$ $(1\le i\le k\le [b/2])$ with 
$$\sum [\gamma_D(t_i)-\gamma_D(s_i)]=\frac{1}{2}[\gamma_D(T)-\gamma_D(0)].$$
W.l.o.g. we can assume that $a_1=0$. In the other case simply consider the complementary intervals $[t_{i-1},s_i]$. Note that 
$$\|\sum [\gamma_D(t_i)-\gamma_D(s_i)]-h\|\le \frac{1}{2}(\std +\mathfrak{f}_x(2h)),$$ 
since $\|[\gamma(T)-\gamma(0)]-2h\|\le \std +\mathfrak{f}_x(2h)$. Choose inductively deck transformations $k_i$ starting with $k_1:=0\in H_1(M,\Z)_\R$ and 
$k_i\in H_1(M,\Z)_\R$ for $i\ge 2$ such that $\gamma(s_i)+k_i\in J^+(\gamma(t_{i-1})+k_{i-1})$ and $\dist(\gamma(t_{i-1})+k_{i-1},\gamma(s_i)+k_i)\le \fil(g,g_R)$. 
Join $\gamma(t_{i-1})+k_{i-1}$ and $\gamma(s_{i})+k_i$ by a future pointing curve with length at most $\fil(g,g_R)$. The resulting future pointing curve $\zeta\colon 
[0,T']\to \overline{M}$ then satisfies 
$$\|\zeta(T')-\zeta(0)-h\|\le [b/2]\fil(g,g_R)+\frac{1}{2}(\std+\mathfrak{f}_x(2h)).$$
Since by theorem \ref{3.1} we have $\dist(\zeta(T'),x+h)\le \|\zeta(T')-\zeta(0)-h\|+\std$, the lemma follows for $C:=2[b/2]\fil(g,g_R)+3\std$.
\end{proof}

Now we can apply lemma \ref{lemmabu} to the function $n\mapsto \mathfrak{f}(nh)$ for every $h\in H_1(M,\Z)_\R$. As a result we get $\mathfrak{a}(h)\in \R$ with 
$|\mathfrak{a}(h)n-\mathfrak{f}(nh)|\le 2C$ for all $n\in\N$. This immediately implies positive homogeneity of $\mathfrak{a}$. Combining this we get the following 
fact.
\begin{note}\label{100}
There exists a map $\mathfrak{a}\colon H_1(M,\Z)_\R\to \R$ and $C<\infty$ such that
\begin{enumerate}
\item $\mathfrak{a}$ is positively homogenous of degree one, i.e. $\mathfrak{a}(n h)=n \mathfrak{a}(h)$ 
for all $n\in\N$ and 
\item $|\mathfrak{f}(h)-\mathfrak{a}(h)|\le 2C$
\end{enumerate}
for every $h\in H_1(M,\Z)_\R$.
\end{note}

\begin{note}\label{101}
We have $\mathfrak{a}(h)=\dist_{\|.\|}(h,\T)$ for all $h\in H_1(M,\Z)_\R$. 
\end{note}

\begin{proof}
Let $h\in H_1(M,\Z)_\R$. For $n\in\N$ let $\gamma_n\colon [0,T]\rightarrow \overline{M}$ be a future pointing curve with 
$$\dist(\gamma_n(0)+nh,\gamma_n(T))=\mathfrak{f}(nh).$$
Then with theorem \ref{3.1} and note \ref{100} we get
\begin{align*}
|\|n h-(\gamma_n(T)-\gamma_n(0))\|&-\mathfrak{a}(h)n|\\
   &\le |\dist(\gamma_n(0)+nh,\gamma_n(T))-\mathfrak{a}(h)n|+D\\
                                        &\le 2C+D.
\end{align*}
Now we have 
$$\lim_{n\rightarrow \infty}\frac{1}{n}\|nh-(\gamma_n(T)-\gamma_n(0))\|=\dist\nolimits_{\|.\|}(h,\T)$$ 
since otherwise the distance between $\gamma_n(0)+nh$ and $\gamma_n(T)$ would not be minimal.
\end{proof}

To prove the remaining inclusion in the proof of proposition \ref{P01a} observe that by note \ref{100}, \ref{101} and the fact 
that $H_1(M,\Z)_\R$ is a cocompact lattice in $H_1(M,\R)$, the Hausdorff distance between 
$\T= \dist_{\|.\|}(.,\T)^{-1}(0)$ and 
$$\mathfrak{f}^{-1}(0)=\{h\in H_1(M,\Z)_\R|\; \exists x\in \overline{M}\text{ with }x+h\in J^+(x)\}$$ 
is bounded by $2C$. Further observe that by note \ref{F1} there exists a constant $C'=C'(g,g_R)<\infty$ such that 
$$\dist\nolimits_{\|.\|}(J^+(x)-x,J^+(y)-y)\le C'$$
for all $x,y\in \overline{M}$.
Thus the Hausdorff distance of $\mathfrak{f}^{-1}(0)$ and $J^+(x)-x$ is uniformly bounded in $x$. 
Now combining these arguments we get the claim of proposition \ref{P01a}.

\begin{proof}[Proof of (i) $\Rightarrow$ (ii)]
The first step is to confirm that $\T$ does not contain a nontrivial linear subspace. This is done by contradiction.

Assume $\T$ contains a linear subspace $V\neq \{0\}$. Choose $h\in V\setminus\{0\}$. By proposition \ref{P01a} 
there exists for any $h'\in V$ a homology class $h'_x\in J^+(x)-x$ with $\|h'-h'_x\|\le \err(g,g_R)$ for 
any $x\in \overline{M}$. We can choose future pointing curves $\gamma^+,\gamma^-\colon [0,1]\to\overline{M}$ 
with 
\begin{align*}
\|\gamma^+(1)-\gamma^+(0)-h\|&,\; \|\gamma^-(1)-\gamma^-(0)+h\|\le \err(g,g_R),\\ 
\dist(\gamma^+(1),\gamma^-(0))\le &\fil(g,g_R)\text{ and }\gamma^-(0)\in J^+(\gamma^+(1)).
\end{align*}
Then $\dist(\gamma^+(0),\gamma^-(1))\le 2C+\fil(g,g_R)+\std$ and we can construct a future pointing curve 
$\zeta_h$ connecting $\gamma^+(0)$ with $\gamma^-(1)$ of $g_R$-length at least $2\|h\|-\std$. Choose a sequence of 
future pointing curves $\zeta_n:=\zeta_{h_n}\colon [0,T_n]\to \overline{M}$ for an unbounded sequence $h_n\in V$. 
By passing to a subsequence we can assume $\zeta_n(0)\to p'$ and $\zeta_n(1)\to q'$. Choose any point 
$p\in I^-(p')$ and $q\in I^+(q')$. Then $J^+(p)\cap J^-(q)$ is not compact, 
thus contradicting the global hyperbolicity of $(\overline{M},\overline{g})$. Consequently $\T$ cannot contain any 
nontrivial linear subspaces.

If $\T$ doesn't contain a nontrivial linear subspace we can choose a cohomology class $\alpha$ with 
$\ker\alpha\cap\T=\{0\}$. Consequently there exists $\e>0$ such that $\alpha(h)\ge \e\|h\|$ for all 
$h\in\T$. Assume that there exists an admissible sequence of future pointing curves $\gamma_n\colon [a_n,b_n]\to M$ 
with $\|\rho(\gamma_n)\|\le n^{-1}$. Partition $[a_n,b_n]$ into subintervals $[a_{n,i},b_{n,i}]$ such that 
$b_{n,i}-a_{n,i}\in [n,2n]$. We have 
$$\frac{1}{n}(b_n-a_n)\ge \|\gamma_n(b_n)-\gamma_n(a_n)\|\ge \e \sum_{i} \|\gamma_n(b_{n,i})-\gamma_n(a_{n,i})\|.$$
Since $b_n-a_n=\sum_i (b_{n,i}-a_{n,i})$ there exists an index $i$ with $\e\|\gamma_n(b_{n,i})-\gamma_n(a_{n,i})\|\le 2$. 
Consequently we have constructed an admissible sequence of future pointing curves $\gamma'_n\colon [a_n,b_n]\to M$ 
with $\|\gamma'_n(b_n)-\gamma'_n(a_n)\|\le 2\e^{-1}$.
By the previous arguments $\gamma'_n$ has to stay in a uniformly compact subset of $\overline{M}$. But this 
contradicts the strong causality of globally hyperbolic spacetimes.
\end{proof}

\section{Proof of theorem \ref{T16}}\label{rc}

Proposition \ref{C16} is a necessary ingredient in the proof of theorem \ref{T16}. 

\subsection{Proof of Proposition \ref{C16}}
For $p\in M$ let $\T_p$ be the set of classes $k\in H_1(M,\mathbb Z)_\R$ which contain a timelike future pointing
curve through $p$. $\T_p$ is obviously a positively homogenous subset of $\T \cap H_1(M,\mathbb Z)_{\R}$. 
A homology class $h\in H_1(M,\R)$ is called $\T_p$-rational if $nh\in \T_p$ for some $n\in\N$.
\begin{lemmaloc}
Let $(M,g)$ be compact and vicious. Then for every $p\in M$ the set of $\T_p$-rational homology classes is dense in $\T$. 
\end{lemmaloc}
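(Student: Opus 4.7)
The plan is to realize each $h \in \T$ as a limit of classes of the form $k/n$ with $k \in \T_p$ and $n \in \N$, by closing up admissible sequences of future-pointing curves into timelike loops at $p$. The case $h=0$ is trivial: viciousness yields a timelike loop at $p$ with class $k_0 \in \T_p$, and $k_0/n \to 0$ with each $k_0/n$ being $\T_p$-rational.

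For $h \in \T \setminus \{0\}$, I would write $h = \lambda h_0$ with $h_0 \in \T^1$ and $\lambda > 0$ (possible since $\T$ is the cone over $\T^1$), and pick an admissible sequence $\gamma_i \colon [0, T_i] \to M$ of future-pointing curves, parameterized by $g_R$-arclength with $T_i \to \infty$, satisfying $\rho(\gamma_i) \to h_0$. Using note \ref{F1}, I prepend a timelike future-pointing curve $\alpha_i$ from $p$ to $\gamma_i(0)$ and append a timelike future-pointing curve $\beta_i$ from $\gamma_i(T_i)$ to $p$, each of $g_R$-length at most $\fil(g,g_R)$. The concatenation $\eta_i := \alpha_i \ast \gamma_i \ast \beta_i$ is a future-pointing causal loop at $p$ with integer homology class $k_i \in H_1(M,\Z)_\R$, and by theorem \ref{3.1} the discrepancy $\|k_i - (\overline\gamma_i(T_i) - \overline\gamma_i(0))\|$ stays bounded uniformly in $i$.

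The main obstacle is to promote the merely causal loop $\eta_i$ to a timelike one with the same class $k_i$, i.e.\ to establish $k_i \in \T_p$. I would work in the Abelian cover: lift $\eta_i$ to a curve $\overline\eta_i$ from $\overline p$ to $\overline p + k_i$, and let $q_1, q_2 \in \overline M$ denote the breakpoints corresponding to the endpoints of $\gamma_i$. The timelike initial arc gives $\overline p \ll q_1$, the causal middle arc gives $q_1 \le q_2$, and the timelike final arc gives $q_2 \ll \overline p + k_i$. The standard push-up property $x \ll y \le z \ll w \Rightarrow x \ll w$ valid in any spacetime then supplies a genuine timelike curve in $\overline M$ from $\overline p$ to $\overline p + k_i$; projecting down yields a timelike loop at $p$ realizing the class $k_i$, hence $k_i \in \T_p$.

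To conclude, I set $n_i := \lfloor T_i/\lambda \rfloor$, which lies in $\N$ for $i$ large enough. Each $k_i/n_i$ is $\T_p$-rational since $n_i \cdot (k_i/n_i) = k_i \in \T_p$. Writing $k_i/n_i = (k_i/T_i)(T_i/n_i)$ and combining $k_i/T_i \to h_0$ (which uses the bounded discrepancy from theorem \ref{3.1}) with $T_i/n_i \to \lambda$ gives $k_i/n_i \to \lambda h_0 = h$ in the stable norm, completing the approximation.
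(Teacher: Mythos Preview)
Your argument is correct. Both proofs share the same skeleton --- scale $h$ by a large integer $n$, realize the scaled class (up to bounded error) by a causal arc starting at a lift $\overline p$, close up via note~\ref{F1} to a point $\overline p + k$ with $k \in \T_p$, and divide by $n$ --- but they differ in how step two is carried out. The paper invokes proposition~\ref{P01a} to locate, for $h'' = nh \in \T$, a point $q'' \in J^+(\overline p)$ with $\|nh - (q'' - \overline p)\| \le \err(g,g_R)$, which gives a uniform constant $C$ independent of $h$ and makes the argument a two-line computation. You instead go back to the definition of $\T^1$: write $h = \lambda h_0$ with $h_0 \in \T^1$, pick an admissible sequence realizing $h_0$, and close each $\gamma_i$ into a loop at $p$. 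This avoids proposition~\ref{P01a} entirely, which is a genuine simplification since that proposition rests on Burago's cut-and-paste machinery (lemma~\ref{lembu2} and the Borsuk--Ulam argument). The price you pay is a slightly less uniform statement (your error bound depends on the sequence rather than being a single constant $C$), but for density that is immaterial. Your use of the push-up property $\overline p \ll q_1 \le q_2 \ll \overline p + k_i \Rightarrow \overline p \ll \overline p + k_i$ to pass from a causal loop to a timelike one is exactly what the paper does implicitly.
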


\begin{proof}
Let $h\in\T$ and $\e>0$. To prove the lemma we have to find a $\T_p$-rational $h'\in \T\cap B_\e^{\|.\|}(h)$. 

Choose $\overline{p}\in\overline{\pi}^{-1}(p)$. With proposition \ref{P01a} we know that for all $h''\in \T$ there exists a $q''\in J^+(\overline{p})$ 
such that $\|h''-(q''-p)\|\le \err(g,g_R)$. Further we know with note \ref{F1} that there exists a $k''\in H_1(M,\Z)_\R$ such that $\overline{p}+k''\in I^+(q'')$ and 
$\dist(q'',\overline{p}+k'')\le \fil(g,g_R)$. Thus we have
\begin{align*}
\|h''-k''\|&\le \|h''-(q''-\overline{p})\|+\|q''-(\overline{p}+k'')\| \\
&\le \err(g,g_R)+\fil(g,g_R)+\std =:C.
\end{align*}
For $h''=n\cdot h$ with $n\in \N$, we obtain $\|h-k''/n\|\le C/n$. The claim follows for $n\e\ge C$.
\end{proof}

\begin{propositionloc}\label{mainc}
The set of $(\cap_{p\in M}\T_p)$-rational homology classes is dense in $\T$.
\end{propositionloc}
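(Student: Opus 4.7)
The plan is to construct, for each $h\in\T$ and $\e>0$, a single integer class $K\in H_1(M,\Z)_\R$ realized as the homology class of a future-pointing timelike loop at \emph{every} point of $M$ and with $K/n$ (for a suitable $n\in\N$) within $\e$ of $h$. First, fix a basepoint $p_0\in M$. By the preceding lemma there exist $h^{(0)}\in H_1(M,\R)$ with $\|h-h^{(0)}\|<\e/2$ and $m\in\N$ such that $k_0:=mh^{(0)}\in\T_{p_0}$; let $\gamma_0$ be a smooth future-pointing timelike loop at $p_0$ representing $k_0$. After a small perturbation one may assume the tangents of $\gamma_0$ stay bounded away from the light cone by some fixed $\theta_0>0$.

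The crucial ingredient is a \emph{uniform rerouting lemma}: by compactness of $M$ and the local theory of convex normal neighborhoods there exists $\delta=\delta(g,g_R,\theta_0)>0$ such that whenever $\sigma$ is a piecewise smooth future-pointing timelike loop with tangents $\theta_0$-away from the light cone and with $\delta$-dense image, then $[\sigma]\in\T_q$ for every $q\in M$. To prove this, fix $q\in M$ and pick a smooth point $\sigma(t_q)$ with $\dist(q,\sigma(t_q))<\delta$. Inside a convex normal neighborhood $V$ of $\sigma(t_q)$ the quantitative timelikeness of $\dot\sigma(t_q)$ guarantees that for $\delta'$ comparable to $\delta$ the causal diamond $I^+(\sigma(t_q-\delta'))\cap I^-(\sigma(t_q+\delta'))$ contains $q$. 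Replace $\sigma|_{[t_q-\delta',t_q+\delta']}$ by the concatenation of two short future-pointing timelike curves $\sigma(t_q-\delta')\to q\to\sigma(t_q+\delta')$ inside $V$. The resulting loop is future-pointing timelike and passes through $q$; because the modification is supported in the contractible set $V$, its homology class still equals $[\sigma]$, and reparametrizing to start at $q$ yields a timelike loop at $q$ representing $[\sigma]$.

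With the rerouting lemma in hand, fix a $\delta/2$-net $\{q_1,\ldots,q_r\}\subset M$. Using Note~\ref{F1}, concatenate uniformly short future-pointing timelike curves from $p_0$ to $q_1$, from $q_1$ to $q_2,\ldots,$ and finally back to $p_0$, obtaining a piecewise smooth future-pointing timelike loop $\lambda$ at $p_0$; after smoothing its finitely many corners inside local convex normal neighborhoods one may assume its tangents are $\theta_0$-timelike. Let $c:=[\lambda]$ and define $\gamma:=\gamma_0^N\ast\lambda$, again smoothed at the joining corner to have $\theta_0$-timelike tangents. Then $\gamma$ is a future-pointing timelike loop at $p_0$ with class $K:=Nk_0+c$ and $\delta$-dense image. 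Applying the rerouting lemma to $\gamma$ gives $K\in\bigcap_{p\in M}\T_p$. Setting $n:=Nm$ and $h':=K/n=h^{(0)}+c/(Nm)$, one has $nh'=K\in\bigcap_{p\in M}\T_p$ and $\|h-h'\|\le\e/2+\|c\|/(Nm)<\e$ for $N$ large enough, yielding the desired density.

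The main obstacle is the uniformity in the rerouting lemma: a single $\delta$ must work across all of $M$ and for every $\sigma$ with the prescribed tangent angle bound. This requires compactness of $M$ to uniformize the sizes of the convex normal neighborhoods, together with a quantitative version of the Minkowski approximation controlling the causal diamonds. Once this is arranged, the remaining concatenation and scaling arguments are routine applications of viciousness (Note~\ref{F1}) and the preceding density lemma.
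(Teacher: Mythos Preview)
Your overall strategy---build a single ``filling'' timelike loop $\lambda$ of bounded Riemannian length that visits a fine net in $M$, adjoin it to a representative of a class in $\T_{p_0}$, and then reroute the resulting loop through an arbitrary point $q$ without changing its homology class---is exactly the paper's strategy. The paper packages the conclusion slightly more sharply: it shows there is a constant $C$ (independent of $h$ and $\e$) such that every $k_x\in\T_x$ lies within $\|\cdot\|$-distance $C$ of some $k\in\cap_{p}\T_p$; density then follows directly from the preceding lemma without the extra iteration $\gamma_0^N$. But this is a cosmetic difference.

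There is, however, a genuine ordering problem in your rerouting step. You fix $\theta_0$ from $\gamma_0$, then choose $\delta=\delta(\theta_0)$, then a $\delta/2$-net, and only afterwards construct $\lambda$ from Note~\ref{F1}. You then assert that ``after smoothing its finitely many corners \ldots\ one may assume its tangents are $\theta_0$-timelike.'' Smoothing corners only repairs the break points; it does nothing for the tangents along the individual arcs of $\lambda$, and Note~\ref{F1} gives no control whatsoever on how close to lightlike those arcs may be. So you cannot, as written, apply your rerouting lemma to $\gamma_0^N\ast\lambda$, and there is a circularity: $\lambda$ depends on $\delta$, which depends on $\theta_0$, which you would now like to shrink to accommodate $\lambda$.

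The paper sidesteps the angle issue altogether. Its auxiliary lemma produces, for every pair $(p,q)$, lifts $\bar p,\bar q$ at bounded distance and radii $\e_{p,q},\delta_{p,q}>0$ with $B_{\delta_{p,q}}(\bar r)\subset I^-(\bar s)$ and $B_{\delta_{p,q}}(\bar s)\subset I^+(\bar r)$ for all $\bar r\in B_{\e_{p,q}}(\bar p)$, $\bar s\in B_{\e_{p,q}}(\bar q)$. A Lebesgue-number argument on $M\times M$ then yields a uniform $\e>0$ depending only on $(g,g_R)$. One takes a finite cover $\{B_\e(p_1),\ldots,B_\e(p_N)\}$ and builds the filling curve $c_1\ast c_2$ to pass \emph{through} the centers $p_i$; the rerouting at any $q\in B_\e(p_i)$ then uses only the inclusion $B_\e(\bar r)\subset I^\pm(\cdot)$, with no hypothesis on the tangent directions of the filling curve. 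You can repair your argument either by importing this lemma, or by first proving (via compactness of $M\times M$) an angle-bounded version of Note~\ref{F1} and fixing $\theta_0$ from \emph{that}, before $\gamma_0$, $\delta$, or $\lambda$ enter the picture.
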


\begin{lemmaloc}
There exists $C<\infty$ such that for all $p,q \in M$ there are $\overline{p}\in \overline{\pi}^{-1}(p)$ and 
$\overline{q}\in\overline{\pi}^{-1}(q)$
with $\dist (\overline p,\overline q)<C$ and $\e_{p,q},\delta_{p,q}>0$, such that for all 
$\overline r \in B_{\e_{p,q}}(\overline p)$ and all $\overline s\in B_{\e_{p,q}}(\overline q)$, we have
$$B_{\delta_{p,q}}(\overline r)\subseteq I^-(\overline s), B_{\delta_{p,q}}(\overline s)
\subseteq I^+(\overline r).$$
\end{lemmaloc}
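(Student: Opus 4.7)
The plan is to combine uniform viciousness with the openness of the chronological relation in $\overline{M}\times\overline{M}$. First, by Note \ref{F1}, for any $p,q\in M$ there exists a future pointing timelike curve $\gamma\colon [0,1]\to M$ from $p$ to $q$ with $L^{g_R}(\gamma)<\fil(g,g_R)$. Choosing an arbitrary $\overline p\in\overline\pi^{-1}(p)$, the curve $\gamma$ lifts uniquely to a future pointing timelike curve $\overline\gamma\colon [0,1]\to\overline{M}$ starting at $\overline p$ and ending at some $\overline q\in\overline\pi^{-1}(q)$. Since $\overline\pi$ is a local isometry with respect to the (lifted) Riemannian metric, $L^{\overline g_R}(\overline\gamma)=L^{g_R}(\gamma)$, so $\dist(\overline p,\overline q)<\fil(g,g_R)=:C$, and the timelike character of $\overline\gamma$ yields $\overline q\in I^+(\overline p)$. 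The constant $C$ depends only on $g$ and $g_R$, so it is uniform in $p,q$ as required.

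Second, I invoke the standard spacetime causality fact that the chronological relation
\[
\mathcal{R}:=\{(x,y)\in\overline{M}\times\overline{M}\mid y\in I^+(x)\}
\]
is open in the product topology. Since $(\overline p,\overline q)\in\mathcal{R}$, there exists $r>0$ with $B_r(\overline p)\times B_r(\overline q)\subseteq\mathcal{R}$, where balls are measured with the lifted Riemannian metric $\overline g_R$. Set $\e_{p,q}:=\delta_{p,q}:=r/2$. By the triangle inequality, for every $\overline r\in B_{\e_{p,q}}(\overline p)$ the ball $B_{\delta_{p,q}}(\overline r)$ is contained in $B_r(\overline p)$, and similarly for every $\overline s\in B_{\e_{p,q}}(\overline q)$ the ball $B_{\delta_{p,q}}(\overline s)$ is contained in $B_r(\overline q)$. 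Hence for any $\overline x\in B_{\delta_{p,q}}(\overline r)$ the pair $(\overline x,\overline s)$ lies in $\mathcal{R}$, i.e. $\overline x\in I^-(\overline s)$, proving $B_{\delta_{p,q}}(\overline r)\subseteq I^-(\overline s)$; the symmetric inclusion $B_{\delta_{p,q}}(\overline s)\subseteq I^+(\overline r)$ follows by the same argument with the roles of the two factors interchanged.

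There is no serious obstacle here: the argument is a routine combination of Note \ref{F1} (which supplies a short future pointing timelike connecting curve between any pair of points, with $g_R$-length bounded by the global constant $\fil(g,g_R)$) together with the openness of the chronological future (which propagates the strict timelike relation from the single pair $(\overline p,\overline q)$ to neighborhoods of both endpoints). In particular, neither global hyperbolicity of $\overline M$ nor any finer class A structure is invoked beyond the viciousness of $(M,g)$.
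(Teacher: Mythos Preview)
Your proof is correct and follows essentially the same route as the paper: lift a short timelike curve provided by Note~\ref{F1} to obtain $\overline p,\overline q$ with $\dist(\overline p,\overline q)<\fil(g,g_R)$ and $\overline q\in I^+(\overline p)$, then use that the chronological relation is open to produce the neighborhoods. The paper phrases the second step as ``considering normal neighborhoods around $\overline p$ and $\overline q$'' while you invoke openness of $I$ directly and split the radius via the triangle inequality; these are the same idea in slightly different language.
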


\begin{proof}
Choose any timelike future pointing curve $\gamma$ connecting $p$ with $q$ of $g_R$-length less than $\fil(g,g_R)$. 
Considering a lift of $\gamma$ to $\overline M$ with endpoints $\overline p$ resp. $\overline q$, the claim follows 
when considering normal neighborhoods around $\overline p$ and $\overline q$.

\end{proof}

\begin{proof}[Proof of Proposition \ref{mainc}]
The proposition follows from the observation that there exists a constant $C<\infty$ such that for every $x\in M$ and every $k_x\in \T_x$ there
exists a $k\in \cap_{p\in M} \T_p$ with $\|k-k_x\|\le C$. The proposition then follows since the $\T_x$-rational points are dense in $\T$ for every $x\in M$. 

To prove the observation we will construct a closed timelike curve of bounded Riemannian arclength that ``almost fills'' the manifold $M$.
Fix $x\in M$ and let $\e >0$ be the minimum of the Lebesgue numbers of the coverings 
$$\{B_{\e_{p,q}}(p)\times B_{\e_{p,q}}(q)\}_{p,q\in M}\text{ and }\{B_{\delta_{p,q}}(p)\times  B_{\delta_{p,q}}(q)\}_{p,q\in M}$$ 
of $M\times  M$. Then for all $p,q\in M$ there exist $\overline p\in \overline{\pi}^{-1}(p)$ and $\overline q\in
\overline{\pi}^{-1}(q)$ with $\dist (\overline p,\overline q)\le \fil(g,g_R)$ such that 
   $$B_{\e}(\overline r)\subseteq I^-(\overline s), B_{\e}(\overline s)\subseteq 
   I^+(\overline r)$$
for all $\overline r \in B_{\e}(\overline p)$ and all $\overline s\in B_{\e}(\overline q)$.
Take a finite subcover $\{B_{\e}(p_1),\ldots ,B_{\e}(p_N)\}$ of $M$ and choose timelike future pointing 
curves $c_1\colon [0,N]\to M$, with $c_1(n)=p_{N-n}$ for $0\leq n\leq N-1$ and $c_1(N)=x$,
and $c_2\colon [0,N]\to M$, with $c_2(0)=x$ and $c_2(n)=p_n$ for $1\leq n\leq N$ such that 
for one (hence every) lift $\overline c_1$ resp. $\overline c_2$ of $c_1$ resp. $c_2$ we have
$B_\e(\overline c_{i}(n+1))\subseteq I^+(\overline c_{i}(n))$ $(i=1,2)$. The $g_R$-arclength of both 
curves can be bounded by $(N+1)\fil(g,g_R)$.  
By joining a timelike future pointing representative of $k\in \T_x$ with $c_1$ and $c_2$, we obtain 
$k+[c_1\ast c_2]\in \cap_{1\leq i\leq N}\T_{p_i}$. The observation follows if we can show that  $k+[c_1\ast c_2]\in 
\cap_{p\in M}\T_{p}$. This can be seen as follows: For $y\in M$ choose $p_i$ with $y\in B_\e(p_i)$. Let 
$\overline y$ be a lift of $y$ to $\overline M$, $\overline p_i$ a lift of $p_i$ with $\overline y\in 
B_\e(\overline p_i)$ and $\overline{c_1*c_2}$ a lift of $c_1*c_2$ through $\overline p_i$. We can choose a timelike 
future pointing curve $\beta_1$ from $\overline{c_1*c_2}(i-1)$ to $\overline{c_1*c_2}(i+1)$ via $\overline y$ and 
homotopic to $\overline{c_1*c_2}|_{[i-1,i+1]}$ if $\overline p_i=\overline{c_1*c_2}(i)$. In the same manner choose 
a future pointing curve $\beta_2$ from $\overline{c_1*c_2}(2N-i-1)$ to $\overline{c_1*c_2}(2N-i+1)$ via $\overline y$ 
if $\overline p_i=\overline{c_1*c_2}(2N-i)$.
If we substitute $\overline{c_1*c_2}|_{[i-1,i+1]}$ with $\beta_1$ and $\overline{c_1*c_2}|_{[2N-i-1,2N-i+1]}$ 
with $\beta_2$, we obtain a timelike future pointing curve homologous to $c_1*c_2$. Thus $k+[c_1\ast c_2]\in \T_y$.

\end{proof}

\begin{lemmaloc}\label{1.5} 
Let $(M,g)$ be compact and vicious. Then there exists $C<\infty$ such that 
for every future pointing curve $\gamma\colon [a,b]\to \overline M$ there exists $k\in \cap_{p\in M}\T_{p}$ with 
$\|\gamma(b)-\gamma(a)-k\|\le C$ and $\e_k>0$, such that  $B_{n\e_k}(p+nk)\subseteq I^+(p)$ 
for all $p \in \overline M$ and all $n\in \mathbb N$.
\end{lemmaloc}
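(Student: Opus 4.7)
The plan is to produce, for the given curve $\gamma$, a class $k\in\cap_{p\in M}\T_p$ within bounded stable-norm distance of $h:=\gamma(b)-\gamma(a)$, and then bootstrap the one-step containment $B_{\e_k}(\overline p+k)\subseteq I^+(\overline p)$ to all iterates. First I would produce a nearby class in $\T_{x_0}$, where $x_0:=\overline\pi(\gamma(a))$. By note \ref{F1} there is a future pointing timelike curve from $\overline\pi(\gamma(b))$ to $x_0$ of $g_R$-arclength at most $\fil(g,g_R)$; lift it starting at $\gamma(b)$ to obtain a timelike curve $\tilde\eta$ ending at $\gamma(a)+k_1$ for some $k_1\in H_1(M,\Z)_\R$. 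The concatenation $\gamma\star\tilde\eta$ is future pointing from $\gamma(a)$ to $\gamma(a)+k_1$ and contains the genuinely timelike segment $\tilde\eta$, so the push-up lemma yields $\gamma(a)+k_1\in I^+(\gamma(a))$, i.e.\ $k_1\in\T_{x_0}$. Theorem \ref{3.1} applied to $\tilde\eta$ bounds $\|k_1-h\|=\|(\gamma(a)+k_1)-\gamma(b)\|\le\fil(g,g_R)+\std$.

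Next I would upgrade $k_1$ to a class lying in every $\T_p$. The observation singled out in the proof of proposition \ref{mainc} provides a constant $C_0<\infty$ such that every element of every $\T_x$ is within stable-norm distance $C_0$ of some element of $\cap_{p\in M}\T_p$; applying it to $k_1$ produces $k\in\cap_{p\in M}\T_p$ with $\|k-h\|\le C:=\fil(g,g_R)+\std+C_0$, a constant independent of $\gamma$.

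For the ball condition, since $\overline p+k\in I^+(\overline p)$ for every $\overline p\in\overline M$ and $I^+(\overline p)$ is open, the function $\overline p\mapsto\dist(\overline p+k,\overline M\setminus I^+(\overline p))$ is strictly positive, lower semicontinuous, and invariant under the deck transformation group. It therefore descends to a strictly positive lower semicontinuous function on the compact quotient $M$, whose positive infimum $\e_k>0$ realizes $B_{\e_k}(\overline p+k)\subseteq I^+(\overline p)$ uniformly in $\overline p$. To upgrade this to $B_{n\e_k}(\overline p+nk)\subseteq I^+(\overline p)$, I would induct on $n$: given $r\in B_{(n+1)\e_k}(\overline p+(n+1)k)$, take a minimizing $g_R$-geodesic from $\overline p+(n+1)k$ to $r$ in $\overline M$ (which exists by completeness of the lifted metric) and let $\overline q'$ be its point at parameter $n/(n+1)$, so that $\dist(\overline p+(n+1)k,\overline q')<n\e_k$ and $\dist(\overline q',r)<\e_k$; setting $\overline q:=\overline q'-k$ and using that the deck action by $k$ is a $g_R$-isometry, the inductive hypothesis gives $\overline q\in B_{n\e_k}(\overline p+nk)\subseteq I^+(\overline p)$ and the base case gives $r\in B_{\e_k}(\overline q+k)\subseteq I^+(\overline q)$, so transitivity of $I^+$ closes the induction.

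The delicate point is ensuring in the first step that $k_1$ is represented by a genuinely timelike (not merely future pointing) curve, since this is precisely what permits invoking the $\T_x$-observation from proposition \ref{mainc}; the push-up lemma combined with the timelike closing segment provided by note \ref{F1} secures this cleanly, and the rest of the argument then reduces to a routine compactness and induction on iterates.
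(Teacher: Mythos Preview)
Your argument is correct and follows essentially the same line as the paper's proof, which is itself extremely terse: it simply points back to the construction in proposition \ref{mainc} for the existence of $k$ and of the uniform $\e_k$, and then says ``the claim follows inductively.'' You have filled in all three of these steps explicitly (closing up $\gamma$ via note \ref{F1} and push-up to land in $\T_{x_0}$, invoking the observation from proposition \ref{mainc}, and the induction on $n$), so there is nothing missing.

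The only stylistic difference is in how you extract the uniform $\e_k$: the paper leans on the finite-cover argument with the $\e_{p,q},\delta_{p,q}$ balls from the lemma preceding proposition \ref{mainc}, whereas you argue via lower semicontinuity of $\overline p\mapsto\dist(\overline p+k,\overline M\setminus I^+(\overline p))$ and compactness of $M$. Both are routine compactness arguments and yield the same conclusion; your version is perhaps slightly cleaner since it avoids rebuilding the covering, while the paper's version has the advantage of reusing machinery already set up.
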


\begin{proof}
The same argument used in the proof of proposition \ref{mainc} shows: There exists $\e_k>0$ such that  
$B_{\e_k}(p+k)\subseteq I^+(p)$ for all $p\in\overline M$. The claim then follows inductively.

\end{proof}

\begin{proof}[Proof of Proposition \ref{C16}]
The proof is an easy consequence of lemma \ref{1.5}. Take any $k\in \cap_{p\in M}\T_{p}$ and $n\in \N$ such that 
$n\e_k\ge \diam(M,g_R)$. Then $B_{\diam(M,g_R)}(\overline{p}+nk)\subset I^+(\overline{p})$ for all $\overline{p}\in
\overline{M}$. This implies directly that $\cap_{p\in M}\T_{p}$ contains a basis of $H_1(M,\R)$. 

\end{proof}

\subsection{Proof of Theorem \ref{T16}}

\begin{propositionloc}\label{3.2}
For every $R>0$ there exists a constant $0<K=K(R)<\infty$ such that
    $$B_R(q)\subseteq I^+(p)$$
for all $p,q\in \overline{M}$ with $q-p\in\T$ and $\dist_{\|.\|}(q-p,\partial \T)\ge  K$.
\end{propositionloc}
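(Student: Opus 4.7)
The plan is to apply Lemma \ref{1.5} once, using a carefully chosen homology class $k\in\cap_{x\in M}\T_x$ whose direction approximates $h:=q-p$, and then to absorb the resulting mismatch between $p+nk$ and $q$ into the size of the ball $B_{n\e_k}(p+nk)\subseteq I^+(p)$ that the lemma supplies. Because $\cap_{x\in M}\T_x$ is only countable while the conclusion must be uniform in $(p,q)$, a preliminary step will be to preselect a finite family of directions that uniformly approximates every direction in $\T$.

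First I will combine Proposition \ref{mainc}---density of the $(\cap_{x\in M}\T_x)$-rational classes in $\T$---with compactness of the stable-norm unit sphere $S_\T:=\T\cap\{v:\|v\|=1\}$ to produce, for any prescribed $\eta>0$, finitely many classes $k_1,\ldots,k_N\in\cap_{x\in M}\T_x$ whose normalizations $k_i/\|k_i\|$ form an $\eta$-net in $S_\T$. Each $k_i$ carries a constant $\e_i>0$ from Lemma \ref{1.5} with $B_{n\e_i}(\overline r+nk_i)\subseteq I^+(\overline r)$ for every $\overline r\in\overline M$ and $n\in\N$. I will then fix $\eta:=\min_i\e_i/(2\|k_i\|)>0$ and build the net with this particular fineness.

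Given $h=q-p\in\T$ with $\dist_{\|.\|}(h,\partial\T)\ge K$---in particular $\|h\|\ge K$ since $0\in\partial\T$---I select the net element $k_i$ whose direction is closest to $h/\|h\|$, put $n:=\lfloor\|h\|/\|k_i\|\rfloor$, and write $h=nk_i+h'$. The $\eta$-net property together with rounding yield $\|h'\|\le\eta\|h\|+\|k_i\|$ and $n\ge\|h\|/\|k_i\|-1$. Lemma \ref{1.5} then furnishes $B_{n\e_i}(p+nk_i)\subseteq I^+(p)$, while Theorem \ref{3.1} bounds $\dist(p+nk_i,q)\le\|h'\|+\std$. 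Consequently, for every $q'\in B_R(q)$ one has $\dist(q',p+nk_i)\le R+\eta\|h\|+\|k_i\|+\std$, so $B_R(q)\subseteq B_{n\e_i}(p+nk_i)\subseteq I^+(p)$ will follow once
\[
\|h\|\bigl(\e_i/\|k_i\|-\eta\bigr)\ge R+\e_i+\|k_i\|+\std.
\]
By construction the coefficient $\e_i/\|k_i\|-\eta$ is at least $\eta>0$, so the inequality holds whenever $\|h\|\ge(R+C_0)/\eta$ for a constant $C_0$ depending only on $\max_i\e_i$, $\max_i\|k_i\|$, and $\std$. Setting $K(R):=(R+C_0)/\eta$ then completes the proof.

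The main obstacle, and what dictates the quantitative choice of $\eta$, will be the interplay between the angular discrepancy $\eta\|h\|$ and the radial gain $n\e_i\sim\|h\|\,\e_i/\|k_i\|$: both are linear in $\|h\|$, so they compete against each other unless the net is built with strict margin $\eta<\min_i\e_i/\|k_i\|$. A secondary delicate point is that one cannot a priori restrict to $h$ of bounded direction---$\dist_{\|.\|}(h,\partial\T)\ge K$ is compatible with $h/\|h\|$ arbitrarily close to $\partial\T$---so the net must cover all of $S_\T$, including directions near the boundary where $\e_i$ may be small; fortunately only finitely many directions are needed, so the infimum of $\e_i/\|k_i\|$ stays positive. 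Once the margin is reserved, the rest reduces to routine bookkeeping.
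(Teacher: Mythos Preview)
Your proposal contains a genuine circularity that is not resolved. You propose to build an $\eta$-net $\{k_1,\dots,k_N\}\subset\cap_{x\in M}\T_x$ of directions in $S_\T$, read off the constants $\e_i$ from Lemma~\ref{1.5}, and then \emph{afterwards} fix $\eta:=\min_i\e_i/(2\|k_i\|)$. But the net---and with it the numbers $\e_i,\|k_i\|$---already depends on $\eta$, so this definition chases its tail. The remark that ``only finitely many directions are needed, so the infimum of $\e_i/\|k_i\|$ stays positive'' misses the real issue: positivity for one fixed $\eta$ is automatic; what you need is $\min_i\e_i/\|k_i\|\ge 2\eta$, and nothing guarantees that any $\eta>0$ satisfies this. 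As you refine the net you are forced (by your own observation) to include directions ever closer to $\partial\T$, and the approximating integer classes $k_i$ there must have $\|k_i\|\to\infty$, so $\e_i/\|k_i\|$ may decay at least as fast as $\eta$.

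A related symptom is that the hypothesis $\dist_{\|.\|}(h,\partial\T)\ge K$ enters your argument only through its weak consequence $\|h\|\ge K$; the direction $h/\|h\|$ is still allowed to approach $\partial\T$, which is precisely what forces you to net all of $S_\T$. The paper avoids this trap by using the full strength of the hypothesis: it fixes once and for all a basis $k_1,\dots,k_b\in\cap_p\T_p$ with a single $\e_0$, proves $B_R(x+h)\subseteq I^+(x)$ for every $h\in\pos\{k_1,\dots,k_b\}$ of large norm, and then invokes Proposition~\ref{P01a} together with $\dist_{\|.\|}(q-p,\partial\T)\ge K''+\err(g,g_R)+\std$ to produce an intermediate point $r\in I^+(p)$ with $q-r$ lying in the fixed simplicial cone $\pos\{k_1,\dots,k_b\}$ and $\|q-r\|\ge K''$. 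No approximation of directions near $\partial\T$ is ever needed, and no circularity arises.
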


Note that there exists a $K<\infty$ such that for every $p\in\overline M$ the intersection $B_K( p)\cap I^+(p)$ 
contains a fundamental domain of the Abelian covering $\overline{\pi}\colon\overline M \to M$.

\begin{proof}
Choose a basis $\{k_1,\ldots,k_b\}\subset \cap_{p\in M}\T_p$ of $H_1(M,\R)$
such that there exists an $\e_0>0$ with $B_{\e_0}(q+k_i)\subseteq J^+(q)$ for all $q\in \overline M$ and all 
$1\leq i\leq b$. The existence of the $k_i$ is ensured by lemma \ref{1.5}.

Set $K':=\sup_{p \in \overline M}\sup_{1\leq i\leq b}\dist(p,p +k_i)$ and 
$K'':= (\frac{R+b K'}{\e_0}+b)(K'+\std)$. For $h=\sum r^ik_i\in H_1(M,\R)$ with $r^i\ge 0$ and $\|h\|>K''$ 
we have
$$\sum r^i\geq \frac{R+b K'}{\e_0}+b.$$
Because of $\sum [r^i]\geq \sum r^i -b$ we obtain $\sum [r^i]\geq \frac{R+b K'}{\e_0}$. By the choice 
of $K'$ and $\e_0$ we conclude
   $$B_R(x+h)\subseteq B_{R+b K'}(x+\sum [r^i]k_i)\subseteq B_{\sum[r^i]\e_0}(x+
     \sum[r^i]k_i)\subseteq I^+(x)$$
with lemma \ref{1.5} for every point $x\in \overline M$. Now if we have
$$\dist\nolimits_{\|.\|}(q-p,\partial \T)\ge K''+\err(g,g_R)+\std=:K$$ 
there exists $r\in I^+(p)$ with $q -r\in \pos\{k_1,\ldots ,k_b\}$ and 
$\|q-r\|\ge K''$ (proposition \ref{P01a}). Since $B_R(q)\subset I^+(r)$ we conclude
  $$B_R(q)\subset I^+(p).$$
  
\end{proof}

\begin{remark}\label{r11}
Lemma \ref{1.5} implies: For every $\e >0$ there exist $N(\e)\in \mathbb N$ and $k_1,\ldots ,k_N\in \cap_{p\in M}\T_p$ with $\T_\e \subseteq 
\pos\{k_1,\ldots ,k_N\}$. Since $\T^\circ \neq \emptyset$ we know that for $\e>0$ sufficiently small, $\{k_1,\ldots ,k_N\}$ necessarily contains a basis of 
$H_1(M,\R)$.
\end{remark}

Recall from proposition \ref{P01a} that $\dist_{\|.\|}(\T,J^+(x)-x)\le \err(g,g_R)$ for all $x\in \overline{M}$. Further recall that $\T$ is a compact cone. Consequently 
we can choose $0<\delta <1$ and $K_0<\infty$ such that 
   $$\|\sum h_i\|\ge \delta \sum \|h_i\|$$
for any finite set $\{h_i\}_{i=1,\ldots n}\subseteq B_{\err(g,g_R)}(\T)\setminus B_{K_0}(0)$.

\begin{lemmaloc}\label{F15}
Set $K_1(\e):=\max\{K_0,\frac{4b\err(g,g_R)}{\delta\e}\}$ and let $\e>0$ be given. Further let $\{h_i\}_{1\le i\le N}\subset \T$ with $\|h_i\|\ge K_1$, $\frac{1}{2}\le 
\frac{\|h_i\|}{\|h_j\|}\le 2$ and $\sum h_i \in \T_\e$ for all $1\le i,j\le N$. Then there exists a subset $\{i_1,\ldots ,i_b\}\subset \{1,\ldots ,N\}$ with $\sum_j h_{i_j}\in 
\T_{\eta}$ for $\eta:=\frac{\delta}{8b}\e$.
\end{lemmaloc}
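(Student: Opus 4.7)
The plan is to use Carathéodory's theorem in $H_1(M,\R)$ to replace the $N$-term sum $H := \sum_{i=1}^N h_i$ by a nonnegative linear combination of just $b$ of the $h_i$'s, and then prove via a supporting-hyperplane argument that the unweighted sum of those same $b$ vectors still sits well inside $\T$. The uniform norm comparability will ensure that ``dropping'' most of the $h_i$'s only affects norms by a controlled factor, while the $\delta$-property displayed just before the lemma will serve as a no-cancellation estimate.

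More precisely, Carathéodory's theorem for conical combinations, applied in the $b$-dimensional ambient space, lets me write
$$H = \sum_{j=1}^b \lambda_j h_{i_j}, \qquad \lambda_j \ge 0,$$
for suitable indices $i_1, \ldots, i_b \in \{1, \ldots, N\}$ (padding with null coefficients if fewer are needed), and I take $H_S := \sum_{j=1}^b h_{i_j}$ as the candidate. Writing $c := \min_i \|h_i\|$, the comparability $\|h_i\| \in [c, 2c]$ yields $\|H_S\| \le 2bc$, while the $\delta$-property applied to $\{h_i\}_{i=1}^N$ (valid since $\|h_i\| \ge K_1 \ge K_0$) yields $\|H\| \ge \delta N c$. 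A second application of the $\delta$-property to the Carathéodory terms $\{\lambda_j h_{i_j}\}$, after isolating the indices with $\lambda_j \|h_{i_j}\| < K_0$ and bounding their contribution by $bK_0$, produces an estimate of the form $\Lambda := \max_j \lambda_j \lesssim \|H\|/(\delta c)$. If one then assumes $H_S \notin \T_\eta$ for contradiction, some supporting functional $\alpha \in \T^\ast$ of unit dual norm satisfies $\alpha(H_S) < \eta \|H_S\|$, and since $\alpha(h_{i_j}) \ge 0$ for every $j$, the decomposition of $H$ forces
$$\alpha(H) = \sum_{j=1}^b \lambda_j \alpha(h_{i_j}) \le \Lambda\, \alpha(H_S) < \Lambda \eta \|H_S\|.$$
Combined with $\alpha(H) \ge \e \|H\|$ from $H \in \T_\e$ and the bounds on $\Lambda$ and $\|H_S\|$, this contradicts the value $\eta = \delta \e / (8b)$ provided the constants close up.

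I expect the delicate point to be the upper bound on $\Lambda$: the Carathéodory coefficients are a priori unstructured, and the indices with $\lambda_j \|h_{i_j}\| < K_0$ fall outside the scope of the $\delta$-property. The threshold $K_1 = \max\{K_0, 4b\,\err(g,g_R)/(\delta \e)\}$ is engineered precisely so that the lower bound $\|H\| \ge \delta N c \ge \delta K_1$ dominates the error contribution $bK_0$, leaving just enough slack to land on the announced $\eta = \delta \e / (8b)$ rather than the cruder $\delta \e / (4b)$ that a naive estimate would yield.
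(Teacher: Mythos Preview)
Your approach works and is a genuine alternative to the paper's. The paper does not decompose $H=\sum_i h_i$ conically; instead it notes that $\conv\{h_i\}\cap\T_\e\neq\emptyset$ (the average $H/N$ lies there), uses Carath\'eodory to extract a \emph{convex} combination $P=\sum_{j}\lambda_jh_{i_j}\in\T_\e$ with $\sum_j\lambda_j=1$, and then for each unit $\alpha\in\T^\ast$ bounds the maximal term via $\alpha(h_{i_k})\ge\alpha(P)\ge\e\|P\|\ge\delta\e\sum_m\lambda_m\|h_{i_m}\|\ge\tfrac{\delta}{2}\e\,\|h_{i_j}\|$ (the last step by norm comparability). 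Combined with $\|H_S\|\le 2b\,\|h_{i_j}\|$ this yields $\alpha(H_S)/\|H_S\|\ge\delta\e/(8b)$ after absorbing an error term. Your route via the coefficient bound $\Lambda\le\|H\|/(\delta c)$ and the chain $\e\|H\|\le\alpha(H)\le\Lambda\,\alpha(H_S)$ is equally valid and in fact delivers the sharper $\eta=\delta\e/(2b)$ under the stated hypothesis $h_i\in\T$.

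Two remarks. First, your worry about indices with $\lambda_j\|h_{i_j}\|<K_0$ is unnecessary: since each $h_{i_j}\in\T$ has norm $\ge K_0$, the $\delta$-inequality extends by rational approximation and homogeneity to arbitrary nonnegative combinations, giving $\|H\|\ge\delta\sum_j\lambda_j\|h_{i_j}\|\ge\delta\Lambda c$ without any splitting into large and small terms. Second, the branch $K_1\ge 4b\,\err(g,g_R)/(\delta\e)$ then plays no role in your argument --- so your closing paragraph misidentifies its purpose. In the paper that branch absorbs a correction $-\err(g,g_R)/(2K_1)$ coming from the possibility $\alpha(h_{i_j})\ge-\err(g,g_R)$; this matters because in the application (proof of Theorem~\ref{T16}) the vectors $\gamma(t_i)-\gamma(s_i)$ lie only in $B_{\err(g,g_R)}(\T)$ by Proposition~\ref{P01a}, not necessarily in $\T$ itself. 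If you want your argument to survive that application you must reinstate an analogous error in the step $\alpha(H)\le\Lambda\,\alpha(H_S)$, and the second branch of $K_1$ re-enters exactly there.
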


\begin{proof}
The assumption $\sum h_i \in \T_\e$ implies that 
$$\conv\{h_1,\ldots ,h_N\}\cap \T_\e \neq \emptyset .$$
With the theorem of Caratheodory follows: There exist $1\le i_1,\ldots ,i_b\le N$ and 
$\lambda_1,\ldots ,\lambda_b\ge 0$ with $\sum_j \lambda_j =1$ such that $\sum \lambda_{j}h_{i_j}\in \T_\e$. 
For every $\alpha\in\T^\ast$ with $\|\alpha\|^\ast=1$ and every $j\in \{1,\ldots ,b\}$ we have
\begin{align*}
\max_m\{\alpha(h_{i_m})\}\ge \alpha(\sum_m \lambda_m h_{i_m})\ge \e \|\sum_m \lambda_m h_{i_m}\|
\ge \delta \e \sum_m \lambda_m\|h_{i_m}\|\ge \frac{\delta}{2}\e \|h_{i_j}\|.
\end{align*}
And therefore for $\alpha(h_{i_k})=\max_m\{\alpha(h_{i_m})\}$ we get
\begin{align*}
\frac{1}{\|\sum_j h_{i_j}\|}\alpha(\sum_j h_{i_j})&\ge \sum_j \frac{1}{2b \|h_{i_j}\|} \alpha(h_{i_j})\\
&\ge \frac{1}{2b\|h_{i_k}\|} \alpha(h_{i_k})-\sum_j\frac{1}{2b\|h_{i_j}\|}\err(g,g_R)\\
     &\ge \frac{\delta}{4b}\e-\frac{\err(g,g_R)}{2K_1}\ge \frac{\delta}{8b}\e.
\end{align*}

\end{proof}

\begin{proof}[Proof of Theorem \ref{T16}]
(i) First we reduce the claim to the following special case: {\it For every $\e>0$ there exists $C(\e)<\infty$ 
such that $|d(x,y)-d(z,w)|\le C(\e)$ for all $x,y,z,w\in \overline{M}$ with $y-x,w-z\in \T_{\e}$ and }
$$\dist(x,z),\dist(y,w)< K_2:=\max\{\fil(g,g_R),2\}+\std.$$
Let $x,y,z,w\in \overline{M}$ be given with $y-x,w-z\in \T_\e$. Choose $k_{x,z}\in H_1(M,\Z)_\R$ with 
$\dist(x-k_{x,z},z)\le \diam(M,g_R)$. For every $k\in H_1(M,\Z)_\R$ we have
\begin{align*}
\dist(x+k,z)+\dist(y+k,w)&\ge \|(x-z)-(y-w)\|-2\std\\
&\ge \|(y-w)-k_{x,z}\|-\diam(M,g_R)-3\std\\
&\ge \dist(y-k_{x,z},w)-\diam(M,g_R)-4\std.
\end{align*}
Since we have $d(x+k,y+k)=d(x,y)$ for every $k\in H_1(M,\Z)_\R$, we can assume that 
$\dist(x,z)< \fil(g,g_R)$. Note that $\diam(M,g_R)\le \fil(g,g_R)$.

If we have $\|y-x\|\ge \frac{2+\e}{\e}K_2\ge \frac{2+\e}{\e}\|x-z\|$ then
\begin{align*}
\dist\nolimits_{\|.\|}(y-z,\partial\T)\ge \dist\nolimits_{\|.\|}(y-x,\partial\T)-\|x-z\|\ge \frac{\e}{2}\|y-z\|.
\end{align*}
The special case then yields $|d(x,y)-d(z,y)|\le C(\frac{\e}{2})$. 

For any integer $1\le i\le n:=[\|y-w\|]$ set 
$h_i:= (w-z)+\frac{i}{n}(y-w)$. Since $\T_{\e/2}$ is convex we have $h_i\in \T_{\e/2}$ for $1\le i\le n$.
Choose points $w_i\in \overline{M}$ with $w_i-z=h_i$ for $1\le i\le n$. With the special case we have 
$(\text{Note that }\dist(w_i,w_{i+1})\le 2+\std)$
$$|d(z,w)-d(z,w_1)|,|d(z,w_i)-d(z,w_{i+1})|,|d(z,w_n)-d(z,y)|\le C(\e/2)$$
for all $1\le i\le n-1$. With the triangle inequality we get
\begin{align*}
|d(x,y)-d(z,w)|&\le (n+2)C(\e/2)\\
&\le C(\e/2)(\std+2)(\dist(x,z)+\dist(y,w)+1)\\
&=:L_c(\e)(\dist(x,z)+\dist(y,w)+1).
\end{align*}
The case $\|y-x\|< \frac{2+\e}{\e}K_2$ can be absorbed into the constant $L_c(\e)$ since the time separation is 
bounded on any compact subset of $\overline{M}\times\overline{M}$. This shows that the general claim follows 
from the special case.

(ii) The special case follows from 
\begin{equation}\label{E20}
d(x,y)\ge d(z,w)-C(\e)
\end{equation}
for $x,y,z,w\in \overline{M}$ with $\dist(x.z),\dist(y,w)< K_2$ and $y-x,w-z\in \T_{\e}$.
Exchanging $(x,y)$ and $(z,w)$ in (\ref{E20}) we get $d(z,w)\ge d(x,y)-C(\e)$. Consequently we have 
$$|d(x,y)-d(z,w)|\le C(\e)$$
and with it the special case.

Recall the definition of $K(.)$ from proposition \ref{3.2}. Set 
$$K_3:=\max\left\{\frac{1}{\eta \delta b}\left(K(K_2)+b(\fil(g,g_R)+\std)\right), K_1\right\}.$$
To prove (\ref{E20}) we can assume that 
$$z\in J^+(x)\text{ and }\dist(z,w)\ge 2\delta b K_3.$$  
By note \ref{F1} there exists $k\in H_1(M,\Z)_\R$ with $\dist(x,z+k)< \fil(g,g_R)$ and $z+k\in J^+(x)$.
The case $\dist(z,w)< 2\delta b K_3$ can be absorbed into the constant $C(\e)$ since the time separation 
is bounded on compact subsets of $\overline{M}\times \overline{M}$.

Choose a maximal future pointing curve $\gamma\colon [0,T]\rightarrow \overline{M}$ connecting 
$z$ with $w$. With our assumption that 
$\dist(z,w)\ge 2\delta b K_3$, we can partition $[0,T]$ into at least $b$-many mutually disjoint intervals $[s_i,t_i]$ with 
$K_3\le \|\gamma(t_i)-\gamma(s_i)\|\le 2K_3$. Then by lemma \ref{F15} there exist intervals 
$[s_{m_j},t_{m_j}]\subset [0,T]$ $(1\le j\le b)$ with $\sum \gamma(t_{m_{j}})-\gamma(s_{m_{j}})\in \T_{\eta}$ 
$(\eta:=\frac{\delta}{8b}\e)$. 
After relabeling we can assume $t_{m_{i}}\le s_{m_{i+1}}$. We want to build a future pointing curve from $z$ to $y$ using pieces of 
$\gamma$. Choose $k_i\in H_1(M,\Z)_\R$ such that 
$$\gamma(t_{m_i})+k_i\in J^+(\gamma(s_{m_i}))\cap B_{\fil(g,g_R)}(\gamma(s_{m_i}))$$
and future pointing curves $\zeta_i\colon [s_{m_i},t_{m_i}]\to \overline{M}$ from $\gamma(s_{m_i})$ to 
$\gamma(t_{m_i})+k_i$. Define the future pointing curve $\gamma'\colon [0,T]\to \overline{M}$ as 
$$\gamma':=\gamma|_{[0,s_{m_1}]}\ast \zeta_1\ast (\gamma|_{[t_{m_1},s_{m_2}]}+k_1)\ast \zeta_2 \ast \ldots \ast 
\left(\gamma|_{[t_{m_b},T]}+\sum_{i=1}^b k_i\right).$$
Set $h'_i:=\gamma(t_{m_i})-\gamma(s_{m_i})$ and $l_i:=\zeta(t_{m_i})-\zeta(s_{m_i})$.
By construction  we have 
$$w-\gamma'(T)=\sum_{i=1}^b h'_i-l_i.$$
Note that $\sum \|l_i\|\le b(\fil(g,g_R)+\std)$. We have
\begin{align*}
\dist\nolimits_{\|.\|}(w-\gamma'(T),\partial \T)&\ge \dist\nolimits_{\|.\|}\left(\sum h'_i,\partial\T\right)-
\sum \|l_i\|\\
&\ge \eta \|\sum h'_i\| -\sum \|l_i\|\ge \eta \delta \sum\|h'_i\|-\sum\|l_i\|\\
&\ge \eta \delta b K_3-b(\fil(g,g_R)+\std)\\
&\ge K(K_2)>0.
\end{align*}
Since $\sum h'_i\in \T$ and $\dist_{\|.\|}(\sum h'_i-l_i,\partial \T)>0$, we get $w-\gamma'(T)\in \T$. With 
proposition \ref{3.2} we have $y\in B_{K_2}(w)\subset I^+(\gamma'(T))$.
Therefore we can choose future pointing curves $\zeta_{0}$ and $\zeta_{b+1}$ connecting $x$ with $z$ resp. 
$\gamma'(T)$ with $y$ and obtain 
$$d(x,y)\ge L^{\overline{g}}(\zeta_0\ast \gamma'\ast \zeta_{b+1})\ge L^{\overline{g}}(\gamma)
-\sum_{i=1}^b L^{\overline{g}}\left(\gamma|_{[s_{m_i},t_{m_i}]}\right).$$
Choose $\Lambda_{g,g_R}<\infty$ such that $|g(v,v)|\le \Lambda_{g,g_R} g_R(v,v)$ for all $v\in TM$. 
With corollary \ref{1.10} we  have
\begin{align*}
L^{\overline{g}}(\gamma|_{[s_{m_i},t_{m_i}]})&\le \Lambda_{g,g_R} C_{g,g_R} \dist(\gamma(s_{m_i}),\gamma(t_{m_i}))\\
&\le \Lambda_{g,g_R} C_{g,g_R}(2K_3+\std).
\end{align*}
Therefore we conclude
$$d(x,y)\ge d(z,w)- \Lambda_{g,g_R} C_{g,g_R} b(2K_3+\std)=:d(z,w)-C(\e).$$

\end{proof}

\appendix
\section{Sullivan's structure cycles}\label{cones}

In this section we first briefly recall the main definitions and results for structure currents and structure cycles of a cone structure $C$ from \cite{sul}. At the end we 
explain (proposition \ref{T=C}) the relation between the homology classes of structure cycles and the stable time cone $\T$ for the cone structure of future pointing 
vectors in a compact spacetime.

\begin{definitionloc}[\cite{sul}, definition I.1, I.2]
\begin{enumerate}
\item A {\it compact convex cone} $C$ in a (locally convex topological) vector space over $\R$ is a  convex cone which for some (continuous) linear functional $L$ 
satisfies $L(x)>0$ for $x\neq 0$ in $C$ and $L^{-1}(L(x))\cap C$ is compact.
\item A {\it cone structure} on a closed subset $X$ of a smooth manifold $M$ is a continuous field of compact convex cones $\{C_x\}$ in the vector spaces 
$\Lambda_p M_x$ of tangent $p$-vectors on $M$, $x\in X$. Continuity of cones is defined by the Hausdorff metric on the compact subsets of the rays in 
$\Lambda_pM$. 
\end{enumerate}
\end{definitionloc}

Obviously the set of future pointing tangent vectors in a time oriented Lorentzian manifold is an example of a cone structure. This connection is discussed briefly in 
\cite{sul} p. 248/249 $(p=1)$. Other examples include the oriented tangent $p$-planes to an oriented $p$-dimensional foliation $(1\le p\le m)$. 

\begin{definitionloc}[\cite{sul}, definition I.3]
A smooth differential $p$-form $\omega$ on $M$ is {\it transversal} to the cone structure $C$ if $\omega(v)>0$ for each $v\neq 0$ in $C_x$, $x\in M$. 
\end{definitionloc}

\begin{definitionloc}[\cite{sul}, definition I.4]
A {\it Dirac current} is one determined by the evaluation of $p$-forms on a single $p$-vector at one point. The {\it cone of structure currents} $\mathcal{C}$ 
associated to the cone structure $C$ is the closed convex cone of currents generated by the Dirac currents associated to elements of $C_x$, $x\in M$.
\end{definitionloc}

\begin{propositionloc}[\cite{sul}, proposition I.4, I.5]
\begin{enumerate}
\item A cone structure $C$ admits transversal $1$-forms.
\item Let $X$ be a compact subset of $M$. Then the cone of structure currents $\mathcal{C}$ associated to a cone structure $C$ on $X$ is a compact convex cone.
\end{enumerate}
\end{propositionloc}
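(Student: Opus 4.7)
The statement has two parts, and since part (2) leans on part (1), I would tackle them in that order. The underlying theme is to use a transversal form as the defining functional of a compact convex cone, converting the geometric notion of transversality into analytic compactness via a uniform mass bound.

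For part (1) the plan is a standard partition-of-unity construction. At each $x\in X$ the fibre $C_x$ is a compact convex cone in $\Lambda_pM_x$, so by Hahn--Banach there exists a linear functional on $\Lambda_pM_x$ that is strictly positive on $C_x\setminus\{0\}$; realise this as the value at $x$ of a smooth $p$-form $\omega^x$ defined on a neighbourhood of $x$ in $M$ (for instance by extending constantly in local coordinates). Fix an auxiliary Riemannian metric and consider the compact ``unit sphere'' $S_x:=\{\xi\in C_x:|\xi|=1\}$; there $\omega^x$ has a positive minimum $\delta_x$. Continuity of the cone field in the Hausdorff metric on rays, combined with continuity of $\omega^x$, then yields a neighbourhood $U_x$ of $x$ on which $\omega^x_y(\xi)>0$ for every $\xi\in C_y\setminus\{0\}$. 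Take a locally finite subcover $\{U_{x_i}\}$ of $X$ together with a subordinate partition of unity $\{\rho_i\}$, and set $\omega:=\sum_i\rho_i\omega^{x_i}$. At any $y$ in the support of this sum, each term is non-negative on $C_y$ with at least one term strictly positive, so $\omega$ is a smooth transversal form for $C$ on a neighbourhood of $X$.

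For part (2), $\mathcal{C}$ is closed and convex by definition, so the content is to exhibit a continuous linear functional $L$ that is strictly positive on $\mathcal{C}\setminus\{0\}$ with $L^{-1}(1)\cap\mathcal{C}$ compact. Take the $\omega$ built above and define $L(c):=c(\omega)$. On a Dirac current $L(\delta_{x,v})=\omega_x(v)>0$ for $v\in C_x\setminus\{0\}$, so $L$ is strictly positive on every nontrivial positive combination of Dirac currents, and by weak-$\ast$ continuity $L\ge 0$ extends to the closure $\mathcal{C}$. Compactness of $X$ and transversality yield constants $0<c_1\le c_2<\infty$ with $c_1|v|\le\omega_x(v)\le c_2|v|$ for all $v\in C_x$, $x\in X$. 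Hence for any positive combination $c=\sum\lambda_i\delta_{x_i,v_i}$ the mass satisfies $\sum\lambda_i|v_i|\le c_1^{-1}L(c)$; passing to the closure, every $c\in\mathcal{C}$ obeys $\mathrm{mass}(c)\le c_1^{-1}L(c)$, which in particular forces $L(c)=0\Rightarrow c=0$. The slice $L^{-1}(1)\cap\mathcal{C}$ therefore consists of currents of mass at most $c_1^{-1}$ supported in the compact set $X$, and the Riesz--Banach--Alaoglu theorem applied to the dual of the Banach space of continuous $p$-forms on $X$ shows it is weak-$\ast$ compact; intersecting with the weak-$\ast$ closed cone $\mathcal{C}$ preserves compactness.

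The main obstacle is the uniform lower bound $\omega_x(v)\ge c_1|v|$. Pointwise positivity of $\omega$ alone does not suffice, because in its absence $L^{-1}(1)\cap\mathcal{C}$ has no a priori mass bound and Banach--Alaoglu cannot be applied. It is precisely compactness of $X$ together with continuity of the cone field and finite-dimensionality of each fibre that upgrades the pointwise strict inequality to the uniform bound, thereby converting transversality into the quantitative estimate that underpins compactness of $\mathcal{C}$.
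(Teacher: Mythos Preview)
The paper does not supply its own proof of this proposition; it is quoted verbatim from Sullivan \cite{sul}, propositions I.4 and I.5, and the reader is implicitly referred there. Your argument is correct and is exactly Sullivan's: local transversal forms via Hahn--Banach, globalised by a partition of unity, and then the transversal $p$-form supplies both the separating functional $L(c)=c(\omega)$ and the uniform lower bound $\omega_x(v)\ge c_1|v|$ that converts $L^{-1}(1)\cap\mathcal{C}$ into a mass-bounded, hence weak-$\ast$ compact, set.

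One cosmetic remark: the paper's wording ``transversal $1$-forms'' reflects its exclusive interest in the case $p=1$; for a general cone structure on $p$-vectors the transversal object is a $p$-form, as you correctly write.
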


\begin{definitionloc}[\cite{sul}, definition I.6]
If $C$ is a cone structure, the {\it structure cycles} of $C$ are the structure currents which are closed as currents.
\end{definitionloc}

\begin{theoremloc}[\cite{sul}, theorem I.7]\label{TS10}
Suppose $C$ is a cone structure of $p$-vectors defined on a compact subspace $X$ in the interior of $M$ which is also compact
(with or without boundary).
\begin{itemize}
\item[(i)] There are always non-trivial structure cycles in $X$ or closed $p$-forms on $M$ transversal to the cone structure. 
\item[(ii)] If no closed transverse form exists some no-trivial structure cycle in $X$ is homologous to zero in $M$.
\item[(iii)] If no non-trivial structure cycle exists some transversal closed form is cohomologous to zero.
\item[(iv)] If there are both structures cycles and transversal closed forms then
\begin{itemize}
\item[(a)] the natural map
$$\text{(structure cycles on $X\rightarrow$ homology classes in $M$)}$$
is proper and the image is a compact cone $\C\subset H_p(M,\R)$
\item[(b)] the interior of the dual cone $\C'\subset H^p(M,\R)$ consists precisely of the classes of closed forms transverse to $C$.
\end{itemize}
\end{itemize}
\end{theoremloc}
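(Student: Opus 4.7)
The plan is to deduce everything from a Hahn--Banach separation argument between the compact convex cone of structure currents and the closed subspace of boundary currents in the space of $p$-currents $\mathcal{D}'_p(M)$ with the weak-$\ast$ topology. The fact that $\mathcal{C}$ has a compact base (Proposition I.5) and that the boundary operator $d$ is weak-$\ast$ continuous are the two structural ingredients that make the separation arguments work, and the de Rham theorem identifies the relevant cokernels with $H^p(M,\R)$.

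For part (i), I would consider the compact convex set $\mathcal{C}_1 := \{T \in \mathcal{C} : L(T) = 1\}$, where $L$ is the continuous linear functional making $\mathcal{C}$ a compact convex cone. Either $\mathcal{C}_1$ meets the kernel of $d$ (giving a non-trivial structure cycle) or it does not. In the latter case the closed convex set $\mathcal{C}_1$ is disjoint from the closed linear subspace $B_p = \{d S : S \in \mathcal{D}'_{p+1}(M)\}$ of boundary currents, and Hahn--Banach produces a continuous linear functional on $\mathcal{D}'_p(M)$ that is strictly positive on $\mathcal{C}_1$ and vanishes on $B_p$. By a smoothing/regularization argument, such a functional is represented by a smooth $p$-form $\omega$ which, by vanishing on boundaries, is closed, and which, by strict positivity on $\mathcal{C}_1$, is transversal to $C$.

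Parts (ii) and (iii) are obtained by applying the same separation idea in the quotient. For (ii), assume no closed transverse form exists; then structure cycles exist by (i). Consider the image of the cone of structure cycles under the map $\mathcal{C} \cap \ker d \to H_p(M,\R)$. If no non-trivial structure cycle were homologous to zero, this map would be injective, and using the compact base of $\mathcal{C}$ its image would be a closed (in fact compact-based) convex cone in $H_p(M,\R)$ avoiding $0$. Separating this cone from $0$ by a cohomology class in $H^p(M,\R)$, and lifting the representative to a closed form strictly positive on $\mathcal{C} \cap \ker d$, a further perturbation argument on the full cone $\mathcal{C}$ produces a closed transversal form, contradicting the assumption. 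Part (iii) is the dual statement and follows by the symmetric argument, separating closed transversal forms from the zero cohomology class in $H_p(M,\R)^\ast$.

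Finally, (iv)(a) follows because the continuous linear projection $\mathcal{D}'_p(M) \cap \ker d \to H_p(M,\R)$ sends the compact base $\mathcal{C}_1 \cap \ker d$ to a compact set, hence the image cone $\mathcal{C}$ is compact in the projective sense. For (iv)(b), a cohomology class $[\omega]$ lies in the interior of the dual cone $\mathcal{C}'$ iff its pairing with every non-zero element of the compact cone $\mathcal{C}$ is strictly positive; by compactness this is equivalent to being bounded below by a positive constant on $\mathcal{C}_1 \cap \ker d$, and a Hahn--Banach/de Rham argument (extending the functional to a strictly positive functional on all of $\mathcal{C}_1$) produces a closed representative $\omega' \in [\omega]$ that is transversal to $C$. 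I expect the main technical obstacle to be the regularity step — ensuring the continuous linear functional on currents produced by Hahn--Banach is represented by a \emph{smooth} form and that one can simultaneously enforce closedness and strict positivity on the whole cone $C_x$ rather than merely on the set of currents tested against it; this is where Sullivan's smoothing of currents by convolution with a family of mollifiers, combined with the openness of the transversality condition, is crucial.
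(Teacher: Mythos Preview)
The paper does not supply its own proof of this theorem: it is quoted verbatim from Sullivan \cite{sul} in the appendix and used as a black box. So there is nothing in the paper to compare your argument against.

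That said, your outline is essentially Sullivan's original argument: the Hahn--Banach separation between the compact convex cone $\mathcal{C}$ (via its compact base $\mathcal{C}_1$) and the closed subspace of boundaries, followed by identification of the separating functional with a smooth closed form. Your anticipation of the regularity issue is also on target --- Sullivan handles it exactly as you guess, by smoothing currents with the operators $D_\e$ (the ones this paper summarizes just before Lemma~\ref{L101}) and using that transversality is an open condition. One point to tighten in (ii): the separation you describe in $H_p(M,\R)$ yields a closed form positive on all structure \emph{cycles}, but you still need positivity on all of $\mathcal{C}$, not just $\mathcal{C}\cap\ker d$; this is where Sullivan adds a large multiple of an exact transversal form (which exists by Proposition~I.4) to push the separating form to be transversal on the whole cone. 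With that adjustment your sketch is correct and matches the cited source.
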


\begin{propositionloc}[\cite{sul}, proposition I.8]
Any structure current $c$ may be represented $c=\int_X v d\mu$ where $\mu$ is a non-negative measure on $X$ (assume compact) and $v$ is a $\mu$-integrable
function into $p$-vectors satisfying $v(x)\in C_x$.
\end{propositionloc}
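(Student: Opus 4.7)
The plan is to use a transversal $1$-form $\lambda$ to identify structure currents with positive Radon measures on a compact base of $p$-vectors, pass to a weak-$*$ limit of approximating Dirac combinations, and finally disintegrate the limit to extract the required $p$-vector valued function. By Proposition I.4 there is a smooth $1$-form $\lambda$ on $M$ with $\lambda_x(v)>0$ for every nonzero $v\in C_x$, $x\in X$. Since each $C_x$ is a compact convex cone, the affine slice $K_x:=\{v\in C_x:\lambda_x(v)=1\}$ is a compact convex subset of $\Lambda_p M_x$. Continuity of the cone field together with the compactness of $X$ makes the total slice $K:=\bigcup_{x\in X}\{x\}\times K_x$ a compact set.

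First I would realize every $c\in\mathcal{C}$ as a finite positive Radon measure on $K$. By definition $c$ is a weak-$*$ limit of finite combinations $c_n=\sum_i\mu_{i,n}\,\delta_{x_{i,n}}\otimes v_{i,n}$ with $v_{i,n}\in C_{x_{i,n}}$ and $\mu_{i,n}>0$. Discarding zero terms, replacing $v_{i,n}$ by $u_{i,n}:=v_{i,n}/\lambda(v_{i,n})\in K_{x_{i,n}}$ and multiplying $\mu_{i,n}$ by $\lambda(v_{i,n})$, one rewrites $c_n$ as $\sum_i\tilde\mu_{i,n}\,\delta_{x_{i,n}}\otimes u_{i,n}$. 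The associated Radon measures $\nu_n:=\sum_i\tilde\mu_{i,n}\,\delta_{(x_{i,n},u_{i,n})}$ on $K$ satisfy $\nu_n(K)=c_n(\lambda)\to c(\lambda)<\infty$, so by Banach--Alaoglu a subsequence converges weak-$*$ to a non-negative Radon measure $\nu$ on $K$. Since every smooth $p$-form $\omega$ defines a continuous function $(x,v)\mapsto \omega_x(v)$ on $K$, one obtains
$$c(\omega)=\lim_n c_n(\omega)=\lim_n\int_K \omega_x(v)\,d\nu_n(x,v)=\int_K \omega_x(v)\,d\nu(x,v).$$

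Finally I would disintegrate $\nu$ along the projection $\pi\colon K\to X$. Since $X$ is compact metrizable and $\pi$ continuous, the disintegration theorem produces $\nu=\int_X \nu_x\,d\mu(x)$ where $\mu:=\pi_*\nu$ and each $\nu_x$ is a probability measure on $K_x$ for $\mu$-a.e.\ $x$. Set
$$v(x):=\int_{K_x} w\,d\nu_x(w);$$
the map $x\mapsto v(x)$ is $\mu$-measurable, and convexity of $K_x$ forces $v(x)\in K_x\subset C_x$. For any smooth $p$-form $\omega$ one then computes
$$c(\omega)=\int_X\int_{K_x}\omega_x(w)\,d\nu_x(w)\,d\mu(x)=\int_X \omega_x(v(x))\,d\mu(x),$$
which is the asserted identity $c=\int_X v\,d\mu$.

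The main obstacle is the last step: producing $v(x)\in C_x$ from the disintegrating measures $\nu_x$ requires the cone field to be sufficiently regular that the slices $K_x$ vary continuously in $x$ and that the barycenter $\int_{K_x}w\,d\nu_x(w)$ actually lies in $C_x$. This is handled by convexity and compactness of each $K_x$ together with continuity of the field $\{C_x\}$; once these are in place, the remainder of the argument reduces to standard weak-$*$ compactness (Banach--Alaoglu) and the disintegration theorem on a Polish base.
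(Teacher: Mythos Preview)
The paper does not give its own proof of this proposition; it is quoted verbatim from Sullivan's article \cite{sul} as background material in the appendix, so there is nothing in the present paper to compare against. Your argument is the standard Choquet-type representation (compact base via a transversal form, weak-$*$ compactness, disintegration, barycenter) and is correct; this is also essentially Sullivan's approach. One small slip in wording: you call $\lambda$ a ``transversal $1$-form'', but the cone sits in $\Lambda_p M$, so the transversal form guaranteed by Definition~I.3 / Proposition~I.4 is a $p$-form---your use of it is otherwise fine. The $\mu$-integrability of $v$ is automatic since $v(x)\in K_x\subset K$ with $K$ compact and $\mu(X)=c(\lambda)<\infty$.
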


Let $M$ be compact and oriented. Then for $p+q=m=\dim M$ the currents in the image of the map $\{$smooth $q$-forms$\} \to \{p$-currents$\}$, $\tau\mapsto 
(\omega\mapsto \int \omega\wedge \tau)$ are called the {\it diffuse} $p$-currents (\cite{sul} p.232). One can then define operators $D_\e\colon \{p\text{-currents}\}\to 
\{\text{diffuse }p\text{-currents}\}$ $(\e>0)$ such that 
\begin{itemize}
\item[(i)] $D_\e c$ approaches $c$ for $\e\to 0$ for all $p$-currents $c$.
\item[(ii)] $D_\e$ commutes with the boundary operator $\partial$ and preserves homology classes (\cite{sul} p. 232-234).
\end{itemize}
Note that we can represent any diffuse $p$-current $c$ by a smooth measure $\mu$, i.e. a smooth $m$-form, and a smooth $p$-vector field $V$, i.e.
$c=\int Vd\mu$ (\cite{sul} \textsection 2).

A {\it foliation cycle} to an oriented foliation is a structure cycle for the cone structure defined by the oriented tangent spaces to the foliation (\cite{sul} p. 227).

Consider a non-zero vector field $X$ with flow $\phi_t$ tangent to a $1$-dimensional foliation $\mathfrak{F}$. Then $X$ defines a map between measures and 
structure $1$-currents, $\mu\to (X,\mu)$ with $(X,\mu)(\omega)=\int \omega(X)d\mu$.

\begin{propositionloc}[\cite{sul}, proposition II.24]\label{II.24}
$\mu\to (X,\mu)$ defines continuous bijections
\begin{itemize}
\item[(i)] non negative measures on $M$ $\sim$ foliation currents along the foliation,
\item[(ii)] measures invariant under flow $\sim$ foliation cycles along the foliation.
\end{itemize}
\end{propositionloc}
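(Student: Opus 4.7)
The plan is to unpack each of the two bijections separately, building on the general representation theorem for structure currents (Proposition I.8) and the fact that, because the foliation is $1$-dimensional, each cone $C_x$ is a single ray along $X(x)$.

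For well-definedness and injectivity in (i), I would first check that for any non-negative Borel measure $\mu$ on $M$ the assignment $\omega \mapsto \int \omega(X)\,d\mu$ is a structure $1$-current tangent to $\mathfrak{F}$: it is continuous in $\omega$ in the weak topology on $1$-forms, and on any form $\omega$ with $\omega(X)\ge 0$ it is non-negative, so it lies in the closed convex cone generated by the Dirac currents at points of $X(x)\in C_x$. Injectivity follows because $X$ is nowhere zero: given any continuous function $f\in C(M)$, a partition of unity argument produces a smooth $1$-form $\omega_f$ with $\omega_f(X)=f$, so if $(X,\mu_1)=(X,\mu_2)$ then $\int f\,d\mu_1=\int f\,d\mu_2$ for every $f\in C(M)$, and Riesz representation forces $\mu_1=\mu_2$.

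For surjectivity in (i), let $c$ be a foliation current along $\mathfrak{F}$. By Proposition I.8 one writes $c=\int_M v\,d\mu'$ where $\mu'$ is a non-negative measure and $v$ is a $\mu'$-integrable field of tangent vectors with $v(x)\in C_x$ for $\mu'$-a.e.\ $x$. Because $\mathfrak{F}$ is $1$-dimensional, $C_x=\mathbb{R}_{\ge 0}\cdot X(x)$, so there is a non-negative measurable function $\lambda$ with $v(x)=\lambda(x)X(x)$; integrability of $v$ together with $|X|\equiv 1$ gives $\lambda\in L^{1}(\mu')$. Setting $\mu:=\lambda\mu'$ one obtains $c=\int\omega(v)\,d\mu'=\int\omega(X)\lambda\,d\mu'=(X,\mu)(\omega)$. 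Continuity of $\mu\mapsto(X,\mu)$ in the vague/weak-$\ast$ topologies on both sides is then immediate from the definition together with the fact that each test $1$-form $\omega$ pairs $\mu$ with the continuous function $\omega(X)$; continuity of the inverse follows because sections $\omega_{f}$ with $\omega_f(X)=f$ can be chosen depending continuously on $f$.

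For (ii), the key computation is that a $1$-current $c$ is a cycle iff $c(df)=0$ for every $f\in C^{\infty}(M)$, and
\begin{equation*}
(X,\mu)(df)=\int df(X)\,d\mu=\int X(f)\,d\mu.
\end{equation*}
Thus $(X,\mu)$ is a foliation cycle iff $\int X(f)\,d\mu=0$ for all $f\in C^{\infty}(M)$. Standard theory of flows on compact manifolds identifies this with $\phi_t$-invariance: differentiating $t\mapsto \int f\circ\phi_t\,d\mu$ at $t=0$ yields $\int X(f)\,d\mu$, and by the semigroup property the derivative vanishes for all $t$ iff it vanishes at $0$ for all $f$; integrating back recovers $\int f\circ\phi_t\,d\mu=\int f\,d\mu$ for all $t$ and $f\in C(M)$. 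So the bijection of (i) restricts to the one claimed in (ii).

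I expect the only genuinely non-routine step to be the surjectivity in (i), where one must invoke Proposition I.8 and then extract the scalar factor $\lambda$ so that the abstract representation $\int v\,d\mu'$ becomes the concrete $(X,\mu)$. The rest is a careful bookkeeping exercise in the duality between measures and continuous functions and the standard characterization of invariant measures by vanishing on $X(f)$.
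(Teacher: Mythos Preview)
The paper does not give its own proof of this proposition; it simply quotes it from Sullivan \cite{sul} as Proposition~II.24 and uses it as a black box. So there is nothing in the paper to compare your argument against.

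That said, your reconstruction is essentially correct and follows the natural line. The only remark is that for continuity of the inverse you do not need to choose $\omega_f$ continuously in $f$: for each fixed $f\in C(M)$ pick any smooth $\omega_f$ with $\omega_f(X)=f$; then $(X,\mu_n)\to(X,\mu)$ as currents implies $\int f\,d\mu_n=(X,\mu_n)(\omega_f)\to(X,\mu)(\omega_f)=\int f\,d\mu$, which is weak convergence of the measures. Everything else---well-definedness, injectivity via Riesz, surjectivity via Proposition~I.8 and the fact that each cone $C_x$ is the ray $\mathbb{R}_{\ge 0}X(x)$, and the characterization of invariance by $\int X(f)\,d\mu=0$---is sound.
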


For the rest of this section we will focus on the cone structure of future pointing causal tangent vectors of a spacetime. It is obvious that the set of future pointing 
vectors in a spacetime form a cone structure in the above sense.

\begin{lemmaloc}\label{L101}
Let $(M.g)$ be compact and oriented spacetime. Then every structure cycle can be approximated by foliation cycles of oriented timelike $1$-dimensional foliations. 
Here oriented means oriented by a future pointing vector field. 
\end{lemmaloc}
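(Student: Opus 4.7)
My plan has three stages: first, smooth $c$ via Sullivan's diffusion operator; second, perturb the resulting smooth vector field to be nowhere vanishing and future timelike, hence defining an oriented timelike $1$-foliation; third, replace the background volume by a flow-invariant measure so as to obtain a genuine foliation cycle. The third stage is the main obstacle.

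For stage one, apply Sullivan's operator $D_\e$. Since $D_\e$ commutes with $\partial$ and satisfies $D_\e c \to c$ in the weak topology on currents, and since $c$ is closed, $D_\e c$ is again a closed diffuse current. Fixing a smooth volume form $\Omega$ on the oriented manifold $M$, $D_\e c$ may be written as the pairing $\omega \mapsto \int \omega(V_\e)\,\Omega$ for a unique smooth vector field $V_\e$. Using proposition I.8 to write $c = \int v\,d\mu$ with $v(x)\in C_x$, together with the continuity of the cone field, the local averaging built into the convolutions defining $D_\e$ implies $V_\e(x)\in C_x$ for all $x$ once $\e$ is small enough (possibly after an arbitrarily small further smooth perturbation).

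For stage two, time-orientability of $(M,g)$ furnishes a nowhere vanishing future pointing timelike vector field $Y$. Set $\tilde V_{\e,\delta} := V_\e + \delta Y$ for $\delta > 0$. Since the sum of a future causal and a future timelike vector is future timelike, $\tilde V_{\e,\delta}$ is nowhere vanishing and everywhere future timelike; its integral curves therefore form a smooth oriented timelike $1$-foliation $\mathfrak F_{\e,\delta}$, and the current $(\tilde V_{\e,\delta},\Omega)$ is weakly close to $D_\e c$ for $\delta$ small.

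For stage three, I must produce a finite Borel measure $\nu_{\e,\delta}$ invariant under the flow $\phi_t$ of $\tilde V_{\e,\delta}$ such that $(\tilde V_{\e,\delta},\nu_{\e,\delta})$ is close to $(\tilde V_{\e,\delta},\Omega)$; by proposition II.24 such a current is then automatically a foliation cycle of $\mathfrak F_{\e,\delta}$. The decisive input is that $D_\e c = (V_\e,\Omega)$ being a cycle is equivalent to $L_{V_\e}\Omega = 0$, whence $L_{\tilde V_{\e,\delta}}\Omega = \delta L_Y\Omega$ is an exact top form of size $O(\delta)$. One may then either extract a weak-$\ast$ accumulation point of the Ces\`aro averages $T^{-1}\int_0^T (\phi_t)_\ast\Omega\,dt$ (whose existence and $\phi_t$-invariance follow from the compactness of the cone of structure currents, Sullivan's proposition I.5), or perturbatively correct $\Omega$ by solving the linear equation $L_{\tilde V_{\e,\delta}}(\rho_\delta\Omega) = -\delta L_Y\Omega$. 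The hard part is the uniform error estimate controlling $\nu_{\e,\delta}-\Omega$ as $\e,\delta \to 0$, for which the cycle hypothesis on $c$ is indispensable: it guarantees that the drift of $\Omega$ under $\phi_t^{\tilde V_{\e,\delta}}$ grows only linearly in $t$ with slope $O(\delta)$, so that a diagonal choice $(\e_n,\delta_n)\to(0,0)$ produces foliation cycles $\hat c_n \to c$.
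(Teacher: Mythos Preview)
Your Stages 1 and 2 are fine and parallel the paper's reasoning. The genuine gap is Stage 3: the claim that the drift of $\Omega$ under the flow of $\tilde V_{\e,\delta}$ ``grows only linearly in $t$ with slope $O(\delta)$'' is incorrect. From $L_{\tilde V_{\e,\delta}}\Omega=\delta(\mathrm{div}_\Omega Y)\Omega$ one gets $(\phi_t)^*\Omega=\exp\bigl(\delta\int_0^t(\mathrm{div}_\Omega Y)\circ\phi_s\,ds\bigr)\Omega$, so the density drifts \emph{exponentially}. Consequently the Ces\`aro averages $T^{-1}\int_0^T(\phi_t)_\ast\Omega\,dt$ need not stay close to $\Omega$ as $T\to\infty$; a weak-$\ast$ limit point $\nu_{\e,\delta}$ exists, but you have no control on $\|\nu_{\e,\delta}-\Omega\|$ and no mechanism to force $(\tilde V_{\e,\delta},\nu_{\e,\delta})\to c$. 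The alternative route via the cohomological equation $L_{\tilde V_{\e,\delta}}(\rho_\delta\Omega)=-\delta L_Y\Omega$ is a small-divisor problem and is in general unsolvable; an arbitrary smooth nowhere-vanishing vector field on a compact manifold need not admit any smooth invariant volume.

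The paper sidesteps this entirely by perturbing at the level of \emph{cycles} rather than vector fields. One first manufactures, for a finite cover $\{U_{x_i}\}$ of $M$, local structure cycles $c_{x_i}=\int V_{x_i}\,d\mu_{x_i}$ with $V_{x_i}$ future timelike and $\mu_{x_i}$ flow-invariant, supported in $U_{x_i}$. Then $c_\delta:=c+\delta\sum_i c_{x_i}$ is still a \emph{closed} structure current, now with timelike support everywhere. Applying $D_\e$ gives $D_\e c_\delta=\int V_{\delta,\e}\,d\Omega$ with $V_{\delta,\e}$ smooth, future timelike (for small $\e$), and---crucially---automatically divergence-free, since $D_\e$ commutes with $\partial$. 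Thus $\Omega$ itself is invariant under the flow of $V_{\delta,\e}$, and $D_\e c_\delta$ is already a foliation cycle; no correction of the measure is needed. Sending $\e\to 0$ then $\delta\to 0$ finishes. The moral: build your timelike perturbation out of cycles (equivalently, choose the auxiliary timelike field to be divergence-free from the outset), and Stage~3 becomes vacuous.
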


\begin{proof}
Let $c$ be a structure cycle. For $x\in M$ we can choose $\e_x >0$ and a future pointing timelike vector field $V_x$ such that all orbits of $V_x$ through a 
neighborhood $U_x$ of $x$ are closed. Next choose a smooth Borel measure $\mu_x$ invariant under the flow of $V_x$ and $\supp \mu_x\subseteq U_x$. Then 
the current $c_x:=\int_M V_x d\mu_x$ is a structure cycle. 

Choose a finite cover $\{U_{x_i}\}_{1\le i\le N}$ of $M$ by such neighborhoods and consider the structure cycle $c_\delta:=c+\delta\sum_{i} c_{x_i}$. Clearly any 
Borel measurable vector field representing $c_\delta$ is future pointing timelike everywhere. Thus any smooth vector field $V_{\delta,\e}$ representing 
$D_\e c_\delta$ will be future pointing timelike for $\e>0$ sufficiently small. 

Clearly $D_\e c_\delta$ is a foliation cycle of the $1$-dimensional foliation determined by $V_{\delta,\e}$. Since $D_\e c_\delta$ approaches $c_\delta$ for 
$\e\to 0$ and $c_\delta\to c$ for $\delta \to 0$, the claim follows.

\end{proof}

\begin{propositionloc}\label{T=C}
Let $(M,g)$ be a compact and vicious spacetime. Then the cone $\C$ of homology classes of structure cycles coincides with the stable time cone $\T$ (see section 
\ref{sec2} for the definition of $\T$). 
\end{propositionloc}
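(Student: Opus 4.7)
The plan is to prove the two inclusions $\T\subseteq \C$ and $\C\subseteq \T$ separately, using viciousness via Note~\ref{F1} and compactness of mass-bounded structure currents for the first direction, and Lemma~\ref{L101} together with ergodic decomposition for the second.

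For $\T\subseteq \C$, since $\C$ is a cone and $\T$ is the cone over $\T^1$, it suffices to show $\T^1\subseteq \C$. Given $h\in \T^1$, I would pick an admissible sequence of future pointing curves $\gamma_n\colon [0,T_n]\to M$, parameterized by $g_R$-arclength with $T_n\to\infty$ and $\rho(\gamma_n)\to h$. By Note~\ref{F1}, I can close each $\gamma_n$ up to a future pointing loop $\tilde\gamma_n$ in $M$ by attaching a curve $\eta_n$ of $g_R$-length at most $\fil(g,g_R)$. The structure current $c_n$ defined as integration along $\tilde\gamma_n$ rescaled by $1/T_n$ is then a \emph{closed} current (hence a structure cycle), with homology class $\frac{1}{T_n}[\tilde\gamma_n] = \rho(\gamma_n)+O(1/T_n)\to h$. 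The total masses of the $c_n$ are uniformly bounded (close to $1$), so by Sullivan's compactness of the cone of structure currents with bounded mass I can extract a subsequence converging weakly to a structure current $c$. Closedness passes to the limit, so $c$ is a structure cycle, and continuity of the homology map on a mass-bounded set gives $[c]=h\in \C$.

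For $\C\subseteq \T$, I observe first that $\T$ is closed (as a cone over the closed accumulation-point set $\T^1$) and convex (viciousness gives convexity of $\T^1$ as proved earlier in the paper). By Lemma~\ref{L101} any structure cycle is a weak limit of foliation cycles of oriented timelike $1$-dimensional foliations, so by continuity of the homology map and closedness of $\T$ it is enough to handle such a foliation cycle $c'$. Writing $c'=\int X\, d\mu$ via Proposition~\ref{II.24}, with $X$ a future pointing timelike unit vector field tangent to the foliation and $\mu$ a finite $\phi_t$-invariant Borel measure, I normalize and apply the ergodic decomposition to realize $\mu$ as an integral of ergodic probability measures $\mu_\omega$. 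For each ergodic $\mu_\omega$, Birkhoff's theorem applied to a closed $1$-form $\omega$ representing $\alpha\in H^1(M,\R)$ yields
\[
\langle \alpha,[c_\omega]\rangle = \int \omega(X)\,d\mu_\omega = \lim_{T\to\infty}\frac{1}{T}\int_0^T \omega(\dot\phi_t(p))\,dt = \lim_{T\to\infty}\langle \alpha,\rho(\phi_{[0,T]}(p))\rangle
\]
for $\mu_\omega$-a.e.\ $p$, so $[c_\omega]\in\T^1$ since $t\mapsto \phi_t(p)$ is an admissible future pointing curve. Integrating over $\omega$ and using convexity and cone-invariance of $\T$ gives $[c']\in\T$, completing the inclusion.

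The main obstacle I anticipate is the technical handling of the compactness and continuity arguments in the weak topology on currents (as opposed to on invariant measures): specifically, verifying that the weak limit of closed structure currents of bounded mass is again closed and retains a well-defined homology class, and that the ergodic decomposition of $\mu$ translates cleanly to an integral decomposition of the associated current $c'$ in a way compatible with the homology map. Once these points are in place, the Birkhoff identification of $[c_\omega]$ with a rotation vector is immediate, and the closing-up via $\fil(g,g_R)$ in the first direction is routine.
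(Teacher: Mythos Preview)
Your proof is correct and follows essentially the same route as the paper. Two minor tactical differences are worth flagging. For $\T\subseteq\C$ the paper does not close up the curves via Note~\ref{F1}; it forms the normalized currents $c_n=\frac{C}{b_n-a_n}\int_{\gamma_n}(\cdot)$ directly and observes that any weak-$\ast$ limit vanishes on exact forms because the boundary evaluations are killed by $b_n-a_n\to\infty$ (so viciousness is not used in this direction). For $\C\subseteq\T$ the paper first reduces to the oriented case (Lemma~\ref{L101} is stated for oriented $M$), which you omit, and---precisely to sidestep the integral-decomposition issue you anticipate---it invokes Krein--Milman rather than the full Choquet ergodic decomposition: the invariant measure $\mu$ is approximated by \emph{finite} positive combinations $\sum_i\lambda_i\mu_i$ of ergodic probability measures, so the associated homology class is the finite sum $\sum_i\lambda_i h_i$ with each $h_i\in\T^1$, hence in $\T$ by convexity, and closedness of $\T$ passes this to the limit.
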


\begin{proof}

(i) $\T\subseteq \C$: By the definition of $\T$ there exists an admissible sequence of future pointing curves $\gamma_n \colon [a_n,b_n]\to M$, 
parameterized by $g_R$-arclength and  $C\ge 0$ such that $C\rho(\gamma_n)\to h$ for $n\to\infty$ for every $h\in \T$. Without loss we can assume that the 
$\gamma_n$ are smooth. Define the structure current $c_n(\omega):=\frac{C}{b_n-a_n} \int_{\gamma_n} \omega$ for a $1$-form $\omega$. 

The norm of $c_n$ is bounded by $C<\infty$. Therefore there exists a subsequence $\{c_{n_k}\}_{k\in\N}$ convergent to a structure current $c$ in the 
weak-$\ast$-topology on currents. Since $b_n-a_n\to \infty$ any accumulation point $c$ of $\{c_n\}_{n\in\N}$ vanishes on exact forms. Consequently $c$ is a 
structure cycle with homology class $h$. 

(ii) $\C\subseteq \T$: We can assume without loss of generality that $M$ is oriented. Otherwise consider the (twofold) orientation cover  $M^{or}$ of $M$. The 
canonical projection $\pi^{or}\colon M^{or} \to M$ induces an isomorphism $\pi^{or}_\ast\colon H_1(M^{or},\R)\to H_1(M,\R)$. Lift the Lorentzian metric $g$ to the
Lorentzian metric $g^{or}$ on $M^{or}$. Note that $g^{or}$ is time-orientable as well. Choose the time-orientation compatible with $\pi^{or}$. Consider on the 
orientation cover the cone structure of future pointing causal vectors. Then it is clear that the cone of homology classes of structure cycles in $H_1(M^{or},\R)$ is 
mapped onto $\C\subseteq H_1(M,\R)$ by $\pi_\ast^{or}$. In the same way it is clear that the stable time cone of $(M^{or},g^{or})$ is mapped onto $\T$ by 
$\pi^{or}_\ast$. Thus if we prove $\C\subseteq \T$ in the orientable case, the non-orientable case follows as well. Further since $M^{or}$ is a finite covering, 
$(M^{or},g^{or})$ is vicious if $(M,g)$ is vicious.

Let $c$ be a structure cycle. Choose an approximation of $c$ by a foliation cycle $\hat{c}$ of an oriented $1$-dimensional timelike foliation $\mathfrak{F}$ 
according to lemma \ref{L101}. Choose a future pointing (timelike) vector field $X$ tangent to $\mathfrak{F}$ with $|X|\equiv 1$ and a Borel measure $\mu$ 
invariant under the flow $\phi_t$ of $X$ such that $\hat{c}=\int X d\mu$ (proposition \ref{II.24}). By the theorem of Krein-Milman any invariant measure is 
approximated in the weak-$\ast$-topology on currents by a positive combination of $\phi_t$-ergodic probability measures. 

Consider the measure $\sum_{i=1}^N \lambda_i \mu_i$, where $\lambda_i>0$ and $\mu_i$ are $\phi_t$-ergodic probability measures. For every $1\le i\le N$ 
choose an $\mu_i$-generic $x_i\in M$, i.e. 
\begin{equation}\label{E200}
\frac{1}{2T} \phi_{.}(x_i)_\sharp(\mathcal{L}^1|_{[-T,T]})\stackrel{\ast}{\rightharpoonup}\mu_i
\end{equation}
for $T\to\infty$. 

Denote with $h_i$ the homology class of the foliation cycle $\int X d\mu_i$. Then \eqref{E200} implies $\rho(\phi_{.}(x_i)|_{[-T,T]})\to h_i$ for $T\to\infty$. 
Since $X$ is timelike this implies $h_i\in \T^1$. Further since $\sum_{i=1}^N \lambda_i \mu_i$ approximates $\mu$, $\sum_i \lambda_i h_i$ approximates the 
homology class of $\hat{c}$. The property that $\T$ is closed then implies that the homology class of $\hat{c}$ is contained in the stable time cone. The same 
argument then shows that the homology class of $c$ is in $\T$. 

\end{proof}

\end{document}